\newtheorem{theorem}{\noindent Theorem}
\newtheorem{lemma}{\noindent Lemma}
\newtheorem{corollary}{\noindent Corollary}
\newtheorem{defe}{\noindent Definition}
\newtheorem{Th}{Theorem}
\newtheorem{Lm}[Th]{Lemma}
\newtheorem{Co}[Th]{Corollary}
\newtheorem{proposition}[Th]{Proposition}
\newtheorem{remark}[Th]{Remark}
\def\eps{\varepsilon}
\DeclareMathOperator{\thi}{th}
\DeclareMathOperator{\sthi}{sth}
\DeclareMathOperator{\supp}{supp}
\newcommand{\me}{\rm{qbs}}
\newcommand{\qbs}{\rm{qb}}
\newcommand{\thn}[1]{\|#1\|_{_{\mathrm{SR}^{1}}}}
\newcommand{\subsetmod}{\vspace{1pt}{\scriptscriptstyle \stackrel{\mathrm{mod 0}}{\subset}}}
\newcommand{\geqmod}{\vspace{1pt}{\scriptscriptstyle \stackrel{\mathrm{mod 0}}{\geq}}}
\newcommand{\leqmod}{\vspace{1pt}{\scriptscriptstyle \stackrel{\mathrm{mod 0}}{\leq}}}
\date{}
\author{A.~M.~Vershik$^{a,b,c}$, P.~B.~Zatitskiy$^{a,d}$, F.~V.~Petrov$^{a,b}$}
\title{VIRTUAL CONTINUITY OF MEASURABLE FUNCTIONS AND ITS APPLICATIONS}
\begin{document}

\maketitle

\abstract{Classical theorem of Luzin states that a measurable function of one real
variable is ``almost'' continuous. For measurable functions of several variables
the analogous statement (continuity on the product of sets having almost full
measure) does not hold in general. Searching for a right analogue of Luzin theorem
leads to a notion of virtually continuous functions of several variables. This
probably new notion implicitly appears in the statements of embedding theorems
and trace theorems for Sobolev spaces. In fact it reveals the nature of
such theorems as statements about virtual continuity.
Our results imply that under conditions
of Sobolev theorems there is a well-defined integration of
a function over wide class of singular measures, including the measures concentrated
on submanifolds. The notion of virtual continuity is used also
for the classification of measurable functions of several variables and in
some questions on dynamical systems, theory of polymorphisms and bistochastic
measures. In this paper we recall necessary definitions and properties of
admissible metrics, give several definitions of virtual  continuity
and discuss some applications. Revised version (without the proofs) is published
in \cite{VZPFA}.}

\let\thefootnote\relax\footnote{
a) St. Petersburg Department of V.~A.~Steklov Institute of Mathematics RAS;
b) St. Petersburg State University;
c) Kharkevich Institute for Information Transmission Problems RAS;
d) P.~L.~Chebyshev Laboratory in St. Petersburg University.
E-mail: avershik@gmail.com, paxa239@yandex.ru, fedyapetrov@gmail.com.
Our work is supported by RFBR grants 14-01-00373A and 13-01-12422-OFI-m;
President of Russia grant MK-6133.2013.1; Government of Russia (Chebyshev Laboratory)
grant
11.G34.31.0026; JSC ``Gazprom Neft''; SPbSU grant (project 6.38.223.2014).}

\newcommand{\thefootnote}{\arabic{footnote}}

\section{Introduction. Admissible metrics}


We consider a standard Lebesgue--Rokhlin probabilistic space with continuous (atomless) measure, isomorphic to the segment $[0,1]$ with Lebesgue measure.
The first author  \cite{V1,V2,V3} suggested to consider on a fixed standard measure
space $(X, {\mathfrak A}, \mu)$ different ({\it admissible}) metrics,
in the contrary to usual approach, when a metric space is fixed and Borel
measures vary. Such an approach is useful and necessary in ergodic theory and
other situations. Matching the metric and measure structures leads to the notion
of a metric (admissible) triple:

\begin{defe}
Metric \textup(or semimetric\textup) $\rho$\textup,
which is measurable as a function of two variables on a standard
space $(X, {\mathfrak A}, \mu),$ is called
\emph{admissible}\textup, if there exists a measurable subset $X_0\subset X$
of full measure\textup, $\mu(X_0)=1$\textup, so that\textup a metric
\textup(resp. semimetric\textup) space $(X_0, \rho)$ is separable.

A standard measure space $(X,\mu)$ equipped by an admissible
\textup(semi\textup)metric $\rho$ is called an admissible metric
triple or just {\it admissible triple} $(X,\mu,\rho)$.
\end{defe}

Properties of admissible semimetrics and metrics are studied in details in our previous
papers \cite{ZP}, \cite{VPZ}. In particular, a number of equivalent definitions
of admissibility is given.

\begin{proposition} If $\rho$ is an admissible metric on $(X, {\mathfrak A}, \mu)$\textup,
then completed Borel sigma-algebra  ${\mathfrak B}={\mathfrak B}(X,\rho)$ is
a subalgebra of  ${\mathfrak A}$\textup and measure $\mu$ is inner regular w.r.t the
metric $\rho$\textup, i.e. for any $A \in {\mathfrak A}$ we have
$$
\mu(A)=\sup\{\mu(K)\colon K\subset A,  K \mbox{ is compact
in a metric} \rho\}.
$$
Thus for any admissible metric initial
measure $\mu$ is a Radon measure in $(X,\rho)$.
\end{proposition}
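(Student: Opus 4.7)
The plan is to dispatch the two assertions in sequence: first the containment $\mathfrak{B} \subset \mathfrak{A}$ by a direct measurability argument, then the inner regularity by reducing it to a tightness claim for the whole space.

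\textbf{Containment.} Joint measurability of $\rho$ on $X \times X$ implies that for each fixed $y$ the slice $x \mapsto \rho(x, y)$ is $\mathfrak{A}$-measurable, so every open $\rho$-ball $B(y, r) = \{x : \rho(x, y) < r\}$ lies in $\mathfrak{A}$. Fix a countable $\rho$-dense set $\{y_n\} \subset X_0$; every open subset of $(X_0, \rho)$ is a countable union of balls $B(y_n, q)$ with $q \in \mathbb{Q}_{>0}$ and therefore belongs to $\mathfrak{A}$. Since $\mu(X \setminus X_0) = 0$ and $\mathfrak{A}$ is $\mu$-complete, this yields $\mathfrak{B} \subset \mathfrak{A}$ after completion.

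\textbf{Inner regularity --- reduction.} The heart of the matter is the claim that for every $\eps > 0$ there is a $\rho$-compact $K \subset X$ with $\mu(K) > 1 - \eps$. Granted this, for an arbitrary $A \in \mathfrak{A}$ one takes such $K$ with $\mu(K) > 1 - \eps/2$ and works inside the compact metric space $(K, \rho)$; by Part 1 the restricted $\mu|_K$ is a Borel probability measure there, whose Radonness on a compact metric space yields a $\rho$-compact $K' \subset A \cap K$ with $\mu(K') > \mu(A) - \eps$. (A short Blackwell-type argument inside the standard space $(K, \mathfrak{A}|_K)$ identifies $\mathfrak{A}|_K$ with the $\mu|_K$-completion of $\mathfrak{B}(K, \rho)$, which is needed to apply Radonness to the merely $\mathfrak{A}$-measurable set $A \cap K$.)

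\textbf{The main technical claim.} I would combine Luzin's theorem with the Polish structure already present on $(X, \mathfrak{A}, \mu)$ as a Lebesgue--Rokhlin space. Fix a compatible Polish metric $d$ on $X$ and the $\rho$-dense sequence $\{y_n\} \subset X_0$. For each $n$, Luzin's theorem applied in $(X, d, \mu)$ to the $\mathfrak{A}$-measurable function $f_n(x) := \rho(y_n, x)$ produces a $d$-compact set $L_n$ with $\mu(X \setminus L_n) < \eps 2^{-n-2}$ on which $f_n$ is $d$-continuous; an additional $d$-compact $L_0 \subset X_0$ with $\mu(X_0 \setminus L_0) < \eps/4$ is chosen by $d$-tightness. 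Setting $K := L_0 \cap \bigcap_{n \geq 1} L_n$ gives a $d$-compact subset of $X_0$ with $\mu(K) > 1 - \eps$ on which every $f_n$ is $d$-continuous.

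\textbf{From $d$-compactness to $\rho$-compactness.} The crux is to show the identity map $(K, d) \to (K, \rho)$ is continuous. Indeed, if $z_m \to z$ in $d$ and $\delta > 0$, the $\rho$-density of $\{y_n\}$ on $K \subset X_0$ supplies an $n$ with $\rho(y_n, z) < \delta/3$; $d$-continuity of $f_n$ on $K$ then gives $\rho(y_n, z_m) \to \rho(y_n, z)$, so $\rho(z_m, z) < \delta$ eventually by the triangle inequality. As a continuous image of a compact set is compact, $K$ is therefore $\rho$-compact. The main difficulty of the argument is precisely this coordination of two topologies --- the auxiliary Polish $d$ underlying the standard structure and the admissible $\rho$ of interest --- through countably many Luzin approximations arranged to yield a single set compact in the target metric $\rho$.
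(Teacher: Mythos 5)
Your argument is correct, but it is organized differently from the paper's. For the containment $\mathfrak{B}\subset\mathfrak{A}$ the paper simply cites \cite{ZP}, whereas you reprove it via measurability of balls $B(y_n,q)$ and separability; one caveat here is that if ``measurable as a function of two variables'' is read with respect to the \emph{completed} product $\sigma$-algebra, then only almost every slice $\rho(\cdot,y)$ is $\mathfrak{A}$-measurable, so the dense sequence $\{y_n\}$ must be chosen among such good points (possible after first replacing $X_0$ by $X_0\cap\supp(\mu|_{X_0})$, where every full-measure subset is dense). For inner regularity the paper takes the metric completion $X_1$ of $(X_0,\rho)$, transports $\mu$ to a Borel measure on the Polish space $X_1$, uses Rokhlin's lemma on the injective measure-preserving identity $(X_0,\mathfrak{A}_0)\to(X_1,\mathfrak{B}_1)$ to conclude $\mathfrak{A}_0\subset\mathfrak{B}_1$, and then reads off a compact $K\subset A\cap X_0$ directly from Radonness of Borel measures on Polish spaces --- one step, no auxiliary metric. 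You instead first establish global tightness of $\mu$ in the metric $\rho$ by fixing an auxiliary Polish metric $d$ on the standard space and transferring $d$-compactness to $\rho$-compactness through countably many Luzin approximations of the functions $\rho(y_n,\cdot)$ (this is essentially Lemma~\ref{equivalence} of the paper with $\rho_1=d$, $\rho_2=\rho$, made explicit), and then localize to the compact $(K,\rho)$. What your route buys is a self-contained, mechanism-revealing proof of tightness that avoids constructing the abstract completion $X_1$ and verifying that the transported measure is Borel there; what it costs is that the localization step still requires exactly the same Rokhlin/Blackwell-type lemma (injective measure-preserving maps of standard spaces carry measurable sets to measurable sets) to see $A\cap K$ inside the $\mu|_K$-completion of $\mathfrak{B}(K,\rho)$, so the overall logical load is the same; your parenthetical remark correctly identifies this as the point that cannot be waved away.
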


\begin{proof}
Measurability of Borel sets, i.e. inclusion ${\mathfrak B} \subset {\mathfrak A}$
was proved in \cite{ZP}. Let us now prove inner regularity.
There exists a subset $X_0\subset X$, $\mu(X_0)=1$, such that the metric space
$(X_0, \rho)$ is separable.
Denote by ${\mathfrak A}_0$ the restriction of
${\mathfrak A}$ on $X_0$. Note that we may choose
$X_0$ closed in $X$ in the metric $\rho$, this implies $X\setminus X_0 \in {\mathfrak B}$.
Let $X_1$ be the completion of the metric space $(X_0,\rho)$.
Define a measure $\mu_1$ on a Borel sigma-algebra
$\tilde{\mathfrak B}_1={\mathfrak B}(X_1,\rho)$ of the Polish space
$(X_1, \rho)$ by  extending
$\mu$ from ${\mathfrak B}_0={\mathfrak B}(X_0,\rho)$ and setting $\mu_1(X_1\setminus X_0)=0$. Let ${\mathfrak B}_1$ be a completion of a sigma-algebra
$\tilde{\mathfrak B}_1$ in measure $\mu_1$.
Note that $(X_1,{\mathfrak B}_1,\mu_1)$ is a Lebesgue space as
a Polish space with Borel probabilistic measure on the completed
Borel sigma-algebra. Moreover, the map
$\mathrm{id}\colon (X_0, {\mathfrak A}_0,\mu) \to (X_1,{\mathfrak B}_1, \mu_1)$
is injective measure preserving map of Lebesgue spaces.
By lemma in p. 5 in the paper \cite{Ro} for such a map an image of
any measurable set is measurable. Thus we have
${\mathfrak A}_0 \subset {\mathfrak B}_1$. Since ${\mathfrak A}_0$
is a sigma-algebra on $X_0$, restrictions of sigma-algebras
${\mathfrak B}$ and ${\mathfrak B}_1$ on $X_0$ coincide, and $X_0 \in {\mathfrak B}$,
hence ${\mathfrak A}_0 \subset {\mathfrak B}$. Since $\mu(X\setminus X_0)=0$
we get ${\mathfrak A} = {\mathfrak B}$.
recall that any probabilistic Borel measure on a Polish space is inner
regular. Let us prove the inner regularity of the measure $\mu$ on the
(maybe not complete) metric space $(X,\rho)$.
Consider any set $A \in {\mathfrak A}$.
Then $A\cap X_0 \in {\mathfrak B}_1$, and using the inner regularity of
the measure $\mu_1$ on the Polish space $(X_1, \rho)$ we may find a compact
set $K \subset A\cap X_0$ for which
$\mu(K)=\mu_1(K)>\mu_1(A\cap X_0)-\varepsilon=\mu(A)-\varepsilon$, as desired.
\end{proof}

M. Gromov in the book \cite{G} suggests to consider arbitrary metric triples
$(X,\mu,\rho)$, which he calls $mm$-spaces. Also, Gromov asks the question about their classification,
having in mind classical situations (Riemannian manifolds and so on). It is natural to consider admissible triples
in this framework. Define equivalence of admissible triples up to measure-preserving isometries:
$(X, \mu, \rho)   \sim  (X', \mu', \rho')$,
if
$$\exists T:X\rightarrow X'; \quad T\mu=\mu'; \quad \rho'(Tx,Ty)=\rho (x,y).$$

Here is the main result on this equivalence:
\begin{theorem} {(Gromov \cite{G}; Vershik \cite{V1})}

Consider the map $F_{\rho}: X^{\infty}\times  X^{\infty}\rightarrow M_{\infty}(\Bbb R):$
$$
\quad F_{\rho}(\{x_i,y_j\}_{(i,j) \in \Bbb N\times \Bbb N})=\{\rho(x_i,y_j)\}_{(i,j) \in \Bbb N\times \Bbb N},
$$
and equip infinite product $X^{\infty}\times  X^{\infty}$ by the product-measure $\mu^{\infty}\times \mu^{\infty}$.
Let $D_{\rho}$ denote the measure on the space of matrices (i.e. random matrix of distances), which is
the $F_{\rho}$-image of the measure  $\mu^{\infty}\times \mu^{\infty}$. Call it
MATRIX DISTRIBUTION of the metric $\rho$. It is a complete invariant of above equivalence of admissible metrics.

In other words, $$(X, \mu, \rho)   \sim  (X', \mu', \rho') \Leftrightarrow   D_{\rho}= D_{\rho'}.$$
 \end{theorem}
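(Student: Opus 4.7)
The plan is to verify both implications separately, with the forward implication being straightforward and the reverse one requiring a reconstruction argument from a generic sample.

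For the forward direction, suppose $T\colon X\to X'$ is a measure-preserving isometry. Define $T^{\infty}\colon X^{\infty}\times X^{\infty}\to (X')^{\infty}\times (X')^{\infty}$ coordinatewise. Then $T^\infty$ pushes $\mu^\infty\times\mu^\infty$ to $(\mu')^\infty\times(\mu')^\infty$, and the isometry property gives $F_{\rho'}\circ T^\infty=F_\rho$. Pushing forward along this identity yields $D_\rho=D_{\rho'}$.

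For the reverse direction, the plan is to reconstruct the admissible triple (up to isomorphism) from a single generic realization of the random matrix, so that two triples producing the same $D_\rho$ give isomorphic reconstructions. Concretely, for $\mu^\infty$-a.e.\ sequence $(x_i)$ the points $\{x_i\}$ are dense in $\supp\mu$ (since by the previous Proposition $\mu$ is Radon on the separable space $(X_0,\rho)$), and by the strong law of large numbers the empirical measures $\nu_n:=\frac{1}{n}\sum_{i\le n}\delta_{x_i}$ converge weakly to $\mu$. The same holds for the $y_j$'s. Hence for a $\mu^\infty\times\mu^\infty$-typical pair $(\{x_i\},\{y_j\})$, the matrix $a=(a_{ij})=(\rho(x_i,y_j))$ determines: (i) a semimetric on $\mathbb N\sqcup\mathbb N$ via $d(i,j):=a_{ij}$ and $d(i,i'):=\lim_j |a_{ij}-a_{i'j}|$, $d(j,j'):=\lim_i|a_{ij}-a_{ij'}|$; (ii) its separable metric completion $\widehat X(a)$, in which both index sets are dense; (iii) a Borel probability measure $\widehat\mu(a)$ on $\widehat X(a)$ obtained as the weak limit of the empirical measures (which is well-defined from $a$ because the metric on the index set, and hence weak convergence of finitely supported measures, is an intrinsic notion). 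By construction $(X,\mu,\rho)\simeq(\widehat X(a),\widehat\mu(a),d)$ modulo null sets, for $D_\rho$-a.e.\ $a$.

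Now assume $D_\rho=D_{\rho'}$. Pick a single matrix $a$ that is simultaneously typical for both distributions (which is possible because the sets of typical matrices have full measure with respect to the common law, and can be chosen in the intersection). Its projections give sequences $(x_i,y_j)\subset X$ and $(x'_i,y'_j)\subset X'$ yielding the \emph{same} distance matrix. The map $x_i\mapsto x'_i$, $y_j\mapsto y'_j$ is a bijective isometry of dense subsets, hence extends uniquely to an isometry $T$ of the metric completions; and since $T$ sends the empirical measures $\nu_n$ to $\nu'_n$, and these converge to $\mu$ and $\mu'$ respectively, $T$ is measure-preserving on the supports. Restricting to $\supp\mu$, $\supp\mu'$ yields the desired isomorphism $(X,\mu,\rho)\sim(X',\mu',\rho')$.

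The main obstacle will be the measure-theoretic bookkeeping in step (iii): showing that the weak limit of the empirical measures is intrinsically encoded in the matrix $a$ (so that, notably, it is computed identically from the $X$-side and the $X'$-side). This requires care because the set of ``typical'' matrices must be chosen so that separability, density, and the strong law all hold simultaneously; once those full-measure conditions are pinned down, the transport of the weak limit under the isometry of dense index subsets is automatic.
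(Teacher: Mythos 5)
The paper itself gives no proof of this theorem --- it is quoted from Gromov and Vershik --- so your argument can only be judged on its own terms. Your overall strategy (push-forward for the easy direction; for the converse, reconstruct the metric triple from a single generic realization of the distance matrix as the completion of the sampled points equipped with the weak limit of empirical measures, then transport the reconstruction along the matrix identification) is exactly the classical reconstruction proof, and the skeleton is sound: a.s.\ density of i.i.d.\ samples in $\supp\mu$, Varadarajan-type a.s.\ weak convergence of empirical measures, extension of an isometry of dense subsets to completions, and continuity of push-forward under weak limits all do what you need.

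There is, however, one concrete error at the step that carries the whole weight of the argument. The matrix $a_{ij}=\rho(x_i,y_j)$ records only \emph{cross} distances, so recovering $\rho(x_i,x_{i'})$ intrinsically from $a$ is essential, and your formula $d(i,i'):=\lim_j|a_{ij}-a_{i'j}|$ is wrong as written: the sequence $|\rho(x_i,y_j)-\rho(x_{i'},y_j)|$ has no limit in general, since $y_j$ keeps revisiting neighbourhoods of both $x_i$ and $x_{i'}$ (where the expression is close to $\rho(x_i,x_{i'})$) and of midpoints (where it is close to $0$). The correct intrinsic expression is $\sup_j|a_{ij}-a_{i'j}|$ (equivalently $\limsup_j$): the triangle inequality gives $\le\rho(x_i,x_{i'})$ always, and density of $\{y_j\}$ in $\supp\mu$ together with $x_i,x_{i'}\in\supp\mu$ (both a.s.) gives equality by letting $y_j\to x_i$. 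With this fix the map $x_i\mapsto x_i'$, $y_j\mapsto y_j'$ is indeed a (well-defined, since $\mu,\mu'$ are atomless so sampled points are a.s.\ pairwise distinct and a metric vanishes only on the diagonal) isometry on the union of the two dense families, and the rest of your argument goes through. You should also state explicitly that the resulting $T$ is only a mod~$0$ isomorphism (defined on $\supp\mu$, a set of full measure), which is the intended meaning of the equivalence.
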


In \cite{V4} this result is generalized to the so called \emph{pure} measurable functions of several variables.

The following  lemma is useful in the theory of admissible metrics:
\begin{Lm}\label{equivalence}
Let $\rho_1$, $\rho_2$ be admissible semimetrics on the standard space $(X, \mu)$,
and suppose that $\rho_1$ is metric. Then for any $\varepsilon>0$ there exists measurable subset $K \subset X$
such that $\mu(K)>1-\varepsilon$ and semimetric $\rho_2$ (as a function of two variables)
is continuous on $K\times K$ with respect to metric $\rho_1$.
\end{Lm}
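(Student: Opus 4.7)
The plan is to pass to the sum metric $\rho := \rho_1 + \rho_2$ and reduce everything to a standard compactness argument. First I would verify that $\rho$ is again an admissible metric on $(X, \mu)$. Measurability as a function of two variables is immediate, and $\rho$ is a genuine metric because $\rho_1$ already separates points. The only nontrivial point is separability on a full-measure subset: if $X_0^i \subset X$ witnesses admissibility of $\rho_i$ and $X_0 := X_0^1 \cap X_0^2$, then the diagonal embedding $x \mapsto (x,x)$ is an isometry from $(X_0, \rho_1+\rho_2)$ into $X_0 \times X_0$ equipped with the metric $((x_1,y_1),(x_2,y_2)) \mapsto \rho_1(x_1,x_2) + \rho_2(y_1,y_2)$; the latter is separable as a product of two separable metric spaces, so $(X_0, \rho)$ is separable as well.

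Next I would apply the Proposition to $\rho$: since $\mu$ is inner regular with respect to the admissible metric $\rho$, there exists a $\rho$-compact subset $K \subset X$ with $\mu(K) > 1 - \varepsilon$. The key observation is then that on $K$ the topologies of $\rho$ and $\rho_1$ coincide. Indeed, $\rho \geq \rho_1$ makes the identity map $(K,\rho) \to (K, \rho_1)$ a $1$-Lipschitz, hence continuous, bijection, and since the domain is compact and the codomain is Hausdorff it is automatically a homeomorphism. Composing its inverse with the $1$-Lipschitz identity $(K, \rho) \to (K, \rho_2)$, provided by $\rho \geq \rho_2$, yields continuity of the identity $(K, \rho_1) \to (K, \rho_2)$; equivalently, $x_n \to x$ in $\rho_1$ with $x_n, x \in K$ forces $\rho_2(x_n, x) \to 0$.

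Joint continuity of $\rho_2 \colon K \times K \to \mathbb{R}$ in the product $\rho_1$-topology then follows from the triangle inequality for $\rho_2$: if $x_n \to x$ and $y_n \to y$ in $\rho_1$ with all points in $K$, then
$$|\rho_2(x_n, y_n) - \rho_2(x, y)| \leq \rho_2(x_n, x) + \rho_2(y_n, y) \to 0.$$
The only delicate point in the whole argument is the admissibility of $\rho_1+\rho_2$: a priori, summing admissible metrics could refine the topology enough to destroy separability, but the diagonal-embedding observation rules this out. After that, the classical fact that a continuous bijection from a compact space to a Hausdorff space is a homeomorphism does the real work, and the triangle inequality bootstraps one-variable continuity to joint continuity.
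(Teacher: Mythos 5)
Your proposal is correct and follows essentially the same route as the paper: both form the admissible metric $\rho=\rho_1+\rho_2$, extract a $\rho$-compact set $K$ of measure $>1-\varepsilon$ by inner regularity, use compactness together with the fact that $\rho_1$ is a genuine metric to conclude that the $\rho_1$- and $\rho$-topologies agree on $K$, and then pass to joint continuity via the triangle inequality. The only cosmetic difference is that you package the key compactness step as ``a continuous bijection from a compact space onto a Hausdorff space is a homeomorphism,'' whereas the paper unwinds that same argument into an explicit contradiction with sequences; your explicit verification that $\rho_1+\rho_2$ is admissible (via the diagonal embedding) is a detail the paper takes for granted.
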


\begin{proof}
Consider an admissible metric $\rho=\rho_1+\rho_2$.
Choose a compact set $K$ in this metric so that
$\mu(K)>1-\varepsilon$. Let's show that $\rho$ (hence $\rho_2$) is continuous
on the metric space
$(K\times K,\rho_1\times\rho_1)$.
Triangle inequality reduces this desired continuity to the following fact:
given $\delta>0$ there exists $\sigma>0$ such that $\rho(x,y)<\delta$
whenever
$x,y \in K$ and $\rho_1(x,y)<\sigma$.
If it is not true, then there exists
$\delta>0$ and two sequences $\{x_n\},\{y_n\}$ in $K$ for which
$\rho(x_n,y_n)\geq\delta$ but $\rho_1(x_n,y_n)\to 0$.
Since $(K,\rho)$ is compact, we may without loss of generality
suppose that there exist $x,y \in K$ so that
$$
\rho(x_n,x) \to 0; \qquad \rho(y_n,y) \to 0.
$$
But then
$$
\rho_1(x_n,x)\leq \rho(x_n,x)\to 0; \qquad \rho_1(y_n,y)\leq \rho(y_n,y)\to 0.
$$
Thus $\rho_1(x,y)=0$ while $\rho(x,y)\geq\delta$.
It contradicts to the assumption that $\rho_1$ is a metric.
\end{proof}

Lemma immediately implies the

\begin{Co}\label{coincidence}
Let $\rho_1$ and $\rho_2$ be two admissible metrics on the standard space $(X, \mu)$.
Then for any $\varepsilon>0$ there exists $K \subset X$ such that
$\mu(K)>1-\varepsilon$ and topologies defined by metrics $\rho_1$ and $\rho_2$ on $K$ coincide.
\end{Co}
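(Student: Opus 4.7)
The plan is to reduce the corollary to two applications of Lemma \ref{equivalence}, since that lemma already gives one-sided continuity of a semimetric with respect to a metric on a large set.

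First, I would apply Lemma \ref{equivalence} to the pair $(\rho_1,\rho_2)$ with $\varepsilon/2$ in place of $\varepsilon$ (using that $\rho_1$ is a metric) to obtain a measurable set $K_1 \subset X$ with $\mu(K_1) > 1-\varepsilon/2$ on which $\rho_2$ is continuous as a function of two variables in the $\rho_1$-topology. Then I would apply the lemma again, this time with the roles of the metrics interchanged, to obtain $K_2 \subset X$ with $\mu(K_2) > 1 - \varepsilon/2$ on which $\rho_1$ is continuous in the $\rho_2$-topology. Setting $K := K_1 \cap K_2$ gives $\mu(K) > 1-\varepsilon$, and both continuity statements hold on $K \times K$.

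It remains to deduce that the two induced topologies on $K$ coincide. Suppose $x_n, x \in K$ with $\rho_1(x_n, x) \to 0$. By continuity of $\rho_2$ on $K \times K$ with respect to $\rho_1$, we have $\rho_2(x_n, x) \to \rho_2(x,x) = 0$, so $x_n \to x$ in the $\rho_2$-topology. The symmetric argument, using continuity of $\rho_1$ with respect to $\rho_2$, gives the reverse implication. Thus the identity map $(K,\rho_1) \to (K,\rho_2)$ is a homeomorphism, which is exactly the claim.

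I expect no real obstacle: the only point to be careful about is that Lemma \ref{equivalence} is stated asymmetrically (the first argument must be a metric, the second only a semimetric), so one must verify that the symmetric application is legitimate — which it is, since both $\rho_1$ and $\rho_2$ are admissible metrics. The budgeting of $\varepsilon$ between the two applications is routine.
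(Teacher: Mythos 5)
Your argument is correct and is precisely the intended one: the paper gives no explicit proof, stating only that Lemma \ref{equivalence} ``immediately implies'' the corollary, and the natural way to realize that implication is exactly your symmetric double application of the lemma followed by intersecting the two sets. The $\varepsilon/2$ budgeting and the sequential characterization of the two metric topologies are both handled correctly.
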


\section{Virtual continuity}

\subsection{Luzin's theorem on measurable functions of one variable}

Furthermore we consider (measurable) real-valued functions, though most of our results remain true
for maps into standard Borel space, in particular into Polish spaces.
Egorov's and Luzin's classical theorems on measurable functions of one variable are well-known.
The generalized Luzin's theorem for arbitrary admissible triple follows from above results:

\begin{Co}[Luzin's theorem] Let  $\rho$ be an admissible metric on the standard space $(X,\mu)$,
let $f$ be a measurable map from $X$ into Polish space $(M,d)$. Then for any $\varepsilon>0$ there exists
a measurable subset $K \subset X$ such that $\mu(K)>1-\varepsilon$ and $f$ is continuous on $K$ with respect to
metric $\rho$.
\end{Co}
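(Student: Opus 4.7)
The plan is to reduce the statement to Lemma~\ref{equivalence} by pulling back the Polish metric $d$ along $f$ to get a second admissible (semi)metric on $X$, to which $\rho$ can be compared.

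First I would introduce the pullback semimetric $\rho_f(x,y):=d(f(x),f(y))$ on $X$ and verify that it is an admissible semimetric. For measurability as a function of two variables, I would use that $M$ is separable, so the Borel $\sigma$-algebra on $M\times M$ coincides with the product Borel $\sigma$-algebra; then $(x,y)\mapsto(f(x),f(y))$ is measurable, and composing with the continuous function $d$ yields measurability of $\rho_f$ on $X\times X$. For separability, I would take a countable dense set $\{m_n\}\subset M$ and, whenever the preimage is nonempty, pick one representative $x_{n,k}\in f^{-1}(B(m_n,1/k))$; for any $x\in X$ and any $k$, choosing $m_n$ with $d(f(x),m_n)<1/k$ gives $\rho_f(x,x_{n,k})<2/k$, so $\{x_{n,k}\}$ is a countable $\rho_f$-dense subset of the whole $X$.

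Next I would apply Lemma~\ref{equivalence} with $\rho_1:=\rho$ (an admissible metric, by assumption) and $\rho_2:=\rho_f$ (admissible semimetric, by the previous step). This produces a measurable set $K\subset X$ with $\mu(K)>1-\varepsilon$ on which $\rho_f$, viewed as a function of two variables, is continuous with respect to $\rho\times\rho$.

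Finally I would upgrade this to pointwise continuity of $f$. Fix any $x_0\in K$ and take a sequence $x_n\in K$ with $\rho(x_n,x_0)\to 0$. Joint continuity of $\rho_f$ at $(x_0,x_0)$ gives $d(f(x_n),f(x_0))=\rho_f(x_n,x_0)\to\rho_f(x_0,x_0)=0$, so $f$ is continuous at $x_0$ along $K$; since $x_0\in K$ was arbitrary, $f|_K$ is continuous with respect to $\rho$, as required.

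The only non-routine step is checking that the pullback $\rho_f$ is admissible; the measurability uses separability of $M$ to identify the two candidate $\sigma$-algebras on $M\times M$, and the construction of a countable $\rho_f$-dense set is then the main small point. Once admissibility is in place, Lemma~\ref{equivalence} does essentially all the work, and the passage from continuity of $\rho_f$ on $K\times K$ to continuity of $f$ on $K$ is immediate by fixing one argument.
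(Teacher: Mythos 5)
Your proof is correct and follows essentially the same route as the paper: both pull the Polish metric $d$ back along $f$ and then invoke the comparison of admissible (semi)metrics on a large set (you apply Lemma~\ref{equivalence} directly to the pair $(\rho,\rho_f)$, while the paper applies Corollary~\ref{coincidence} to $\rho$ and $\rho+d(f(\cdot),f(\cdot))$, which is the same idea). Your explicit verification that the pullback semimetric is admissible is a welcome detail that the paper glosses over as ``trivial.''
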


\begin{proof} Set $\rho_1(x,y)=\rho(x,y)+d(f(x),f(y))$. Then $\rho_1$ is a trivial example of
an admissible metric, with respect to which $f$ is continuous. By
\ref{coincidence} there exist a subset $K$ having measure $\mu(K)>1-\eps$, on which this
continuity implies continuity with respect to $\rho$.
\end{proof}

But this fact does not hold true for functions of several variables.

\subsection{Definitions and first examples}
Let $f(\cdot,\cdot)$ be a measurable function of two variables. Then Luzin's theorem analogue
(continuity on the product $X'\times Y'$ of sets of measure $>1-\eps$ with respect to given metric
$\rho[(x_1,y_1),(x_2,y_2)]=\rho_X(x_1,x_2)+\rho_Y(y_1,y_2)$) is not in general true.
This leads to the following key notion of this work.
(Sum of metrics may be replaced to maximum or other metric defining the topology of direct product.
To stress this we denote generic metric with such topology by $\rho_X \times \rho_Y$).

\begin{defe} Measurable function $f(\cdot,\cdot)$ on the product $(X,\mu)\times (Y,\nu)$ of standard spaces
is called \emph{properly virtually continuous} \textup, if for any
$\varepsilon  >0$ there exist sets $X'\subset X$\textup, $Y'\subset Y$
each of which having measure at least $1-\eps$\textup,
and admissible semimetrics $\rho_X$\textup, $\rho_Y$ on $X'$\textup, $Y'$
 respectively\textup
such that function  $f$ is continuous on  $(X'\times Y',\rho_X \times \rho_Y)$.

Function which coincides with a properly virtually continuous function
on the set of full measure in $X\times Y$ is called
\emph{virtually continuous}.
Virtually continuous functions of
 several variables are defined in the same way.
\end{defe}
It is essential that admissible metric with respect to which function becomes continuous is not arbitrary,
but respects the structure of direct product (in more general setting, it respects selected subalgebras, see further).
It is easy to verify that there does not exist universal metric of such type (i.e. such a metric that virtual
continuity implies continuity in this metric). It explains the non-trivial properties of defined notion.

It is clear that any admissible metric (considered as a function of two variables) is virtually continuous.
So is any function, which is continuous with the respect to product of admissible metrics.
Degenerated functions (or ``finite rank functions'') $f(x,y)=\sum_{i=1}^n \varphi_i(x)\psi_i(y)$, where $\phi_i(\cdot), \psi_i(\cdot)$, $i=1,\dots n$
are arbitrary measurable functions, are also virtually continuous. For the proof just use Luzin's theorem for all functions
$\varphi_i(\cdot)$,  $i=1 \dots n$, and $\psi_i(\cdot)$,  $i=1 \dots n$.

Less trivial examples of virtually continuous functions are given by functions from some Sobolev spaces
and kernels of trace class operators.
For virtually continuous functions there exist well-define restrictions on some subsets of zero measure --- concretely,
onto supporters of (quasi)bistochastic measures, see next paragraph.

An easy example of not virtually continuous measurable function on $[0,1]^2$ is provided by the
characteristic function of the triangle $\{x\geq y\}$. In general, for functions on the square of a compact group
depending of the ratio of variables the criterion of virtual continuity is simple:

\begin{proposition}\label{gruppa} Let $G$ be a metrizable compact group, $f$ be a Haar measurable function on
$G$. Then the function $F(x,y):=f(xy^{-1})$ on $G\times G$ is virtually continuous if and only if
$f$ is equivalent to a continuous function.
\end{proposition}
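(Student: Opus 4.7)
The plan is to prove both implications. The ``if'' direction is immediate: if $f$ is continuous, then $F(x,y)=f(xy^{-1})$ is continuous on $G\times G$ in the natural topology, which is an admissible product metric, hence $F$ is virtually continuous.

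For the ``only if'' direction, I fix a right-invariant compatible metric $d$ on $G$ (obtained by inverting a left-invariant Birkhoff--Kakutani metric on the metrizable compact group $G$). Let $\tilde F$ be a properly virtually continuous representative of $F$, so $\tilde F(x,y)=f(xy^{-1})$ almost everywhere. For a small fixed $\varepsilon>0$, proper virtual continuity produces admissible semimetrics $\rho_X,\rho_Y$ and sets $X',Y'\subset G$ of Haar measure at least $1-\varepsilon$ on which $\tilde F$ is continuous with respect to $\rho_X\times\rho_Y$. Applying Corollary~\ref{coincidence} (comparing $\rho_X$ and $\rho_Y$ with $d$) and the inner regularity provided by Proposition~1, I pass to \emph{compact} subsets $X''\subset X'$ and $Y''\subset Y'$ of measure at least $1-C\varepsilon$ on which $\tilde F$ is uniformly continuous in $d\times d$. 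Consequently, for every $\eta>0$ there is $\sigma>0$ such that $|\tilde F(x_1,y_1)-\tilde F(x_2,y_2)|<\eta$ whenever $(x_i,y_i)\in X''\times Y''$ with $d(x_1,x_2),d(y_1,y_2)<\sigma$.

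The key step uses the group structure. Given $g_1,g_2\in G$ with $d(g_1,g_2)<\sigma$, invariance of Haar measure implies $\mu(Y''\cap g_1^{-1}X''\cap g_2^{-1}X'')\ge 1-3C\varepsilon$, which is positive for small $\varepsilon$. For any $y$ in this intersection, right-invariance of $d$ gives $d(g_1y,g_2y)=d(g_1,g_2)<\sigma$, both pairs $(g_iy,y)$ lie in $X''\times Y''$, and hence $|\tilde F(g_1y,y)-\tilde F(g_2y,y)|<\eta$. The substitution $(x,y)\mapsto(xy^{-1},y)$ is measure-preserving on $G\times G$, so $\tilde F(gy,y)=f(g)$ holds for almost every $(g,y)$. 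Fubini's theorem then yields $|f(g_1)-f(g_2)|<\eta$ for $\mu\times\mu$-almost every $(g_1,g_2)$ with $d(g_1,g_2)<\sigma$. In particular, on every ball $B(g_0,\sigma/2)\subset G$ the essential oscillation of $f$ is at most $\eta$, for \emph{every} $g_0\in G$. Letting $\eta\to 0$, the essential oscillation of $f$ vanishes at every point of $G$, so $f$ coincides almost everywhere with a continuous function (defined at each $g_0$ as the common value of the essential limit of $f$ near $g_0$).

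The main obstacle is the interaction between uniform continuity of $\tilde F$ on the product $X''\times Y''$ and the almost-everywhere identification $\tilde F(gy,y)=f(g)$: one must carefully apply Fubini's theorem twice to extract a pointwise oscillation bound on $f$ from the joint inequality on $\tilde F$. Right-invariance of $d$ is essential so that $d(g_1y,g_2y)=d(g_1,g_2)$ does not depend on $y$, and Haar invariance is essential so that $\mu(Y''\cap g_1^{-1}X''\cap g_2^{-1}X'')$ is bounded below uniformly in $(g_1,g_2)$; both features fail for the analogous statement on a general measure space, which explains why the criterion exploits the group structure.
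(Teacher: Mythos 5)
Your proof is correct, but it takes a genuinely different route from the paper's. The paper deduces the ``only if'' direction from Theorem~\ref{tau-virtual}: it approximates $F$ by a step function (disjoint families $A_i$, $B_j$ covering more than half of each copy of $G$, with $|F-c_{ij}|<\eps/3$ on $A_i\times B_j$), picks $i,j$ with $|A_i\cap g_0B_j|>0$ by pigeonhole, and then uses continuity of translation in mean ($|C\cap sC|>|C|/2$ for $s$ near the identity) together with an integral identity to show that the set of $z$ near $g_0$ with $|f(z)-c_{ij}|\geq\eps/3$ is null. You instead work straight from the metric definition of proper virtual continuity: you compare the given admissible semimetrics with a right-invariant compatible metric $d$, pass to compact sets where $\tilde F$ is uniformly $d\times d$-continuous, and exploit right-invariance of $d$ plus bi-invariance of Haar measure to transport the two-variable modulus of continuity to a one-variable essential oscillation bound. (One small citation point: Corollary~\ref{coincidence} is stated for two metrics, whereas $\rho_X,\rho_Y$ are semimetrics; what you actually need is Lemma~\ref{equivalence} with $\rho_1=d$, which gives exactly the continuity of the identity from $(K,d)$ to $(K,\rho_X)$ on a large set. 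Also, the existence of a right-invariant compatible metric on a compact metrizable group should be invoked explicitly, as you do.) Both arguments finish identically, by showing the essential oscillation of $f$ vanishes at every point. Your version is more self-contained and makes the role of the group structure (invariant metric, invariant measure) transparent, at the cost of metric bookkeeping; the paper's version is purely measure-theoretic, needs no choice of compatible metric, and reuses the $\tau$-metric/step-function machinery that is developed anyway for other purposes in the paper.
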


Stress once more that the definition of virtual
continuity is not topological, but measure-theoretical in nature.
It applies to the choice of various metrics on the measure space.
So, the direct sense of the proposition \ref{gruppa} is that the
group structure and the measure-theoretical structure allow to
reconstruct topology.

\subsection{Further properties
of virtually continuous functions}

First of all, virtually continuous functions automatically satisfy stronger
properties that are required by the definition.

At first, using Corollary \ref{coincidence} we immediately
see that metrics may be fixed a priori:

\begin{theorem}\label{arbitrarymetrics}
Let the function $f(\cdot,\cdot)$ be properly virtually continuous.
Then for any admissible semimetrics $\rho_X,\rho_Y$ on $X,Y$ and for any $\varepsilon  >0$ there exist sets $X'\subset X,  Y'\subset Y$,
each of which having measure at least $1-\eps$, such that the
function  $f$ is continuous on  $(X'\times Y',\rho_X \times \rho_Y)$.
 \end{theorem}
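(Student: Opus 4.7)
The plan is to extract a particular pair of witness semimetrics from the definition of proper virtual continuity, combine them with the prescribed semimetrics $\rho_X, \rho_Y$, and then invoke Corollary~\ref{coincidence} to show that on a large product subset the two topologies are interchangeable.

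First I would apply the definition of proper virtual continuity with parameter $\varepsilon/3$: this provides subsets $X_0 \subset X$ and $Y_0 \subset Y$ with $\mu(X_0), \nu(Y_0) > 1 - \varepsilon/3$, together with admissible semimetrics $\rho_X^0$ on $X_0$ and $\rho_Y^0$ on $Y_0$ such that $f$ is continuous on $(X_0 \times Y_0, \rho_X^0 \times \rho_Y^0)$.

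Next, I would form the enlarged admissible semimetrics $\sigma_X := \rho_X|_{X_0} + \rho_X^0$ on $X_0$ and $\sigma_Y := \rho_Y|_{Y_0} + \rho_Y^0$ on $Y_0$. Since $\sigma_X \geq \rho_X^0$ and $\sigma_Y \geq \rho_Y^0$, the product topology of $\sigma_X \times \sigma_Y$ is at least as fine as that of $\rho_X^0 \times \rho_Y^0$, so continuity of $f$ on $(X_0 \times Y_0, \rho_X^0 \times \rho_Y^0)$ automatically yields continuity of $f$ on $(X_0 \times Y_0, \sigma_X \times \sigma_Y)$. Then I would apply Corollary~\ref{coincidence} on $X_0$ to the two admissible metrics $\rho_X|_{X_0}$ and $\sigma_X$, with a parameter small enough that the resulting set $X' \subset X_0$ satisfies $\mu(X') > 1 - \varepsilon$ and the topologies of $\rho_X$ and $\sigma_X$ coincide on $X'$. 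Doing the same on $Y_0$ produces $Y' \subset Y_0$ with analogous properties. Since the topologies of $\rho_X \times \rho_Y$ and $\sigma_X \times \sigma_Y$ thereby coincide on $X' \times Y'$, the continuity of $f$ in $\sigma_X \times \sigma_Y$ established above transfers to continuity in $\rho_X \times \rho_Y$, exactly as required.

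The main point requiring care is the application of Corollary~\ref{coincidence}, which is formally stated for admissible metrics, to our semimetrics $\rho_X$ and $\sigma_X$: this is handled either by observing that the proof of that corollary (two applications of Lemma~\ref{equivalence}) extends to admissible semimetrics with only cosmetic modifications, or by adjoining a tiny auxiliary admissible metric on $X$ throughout the argument and removing it at the end once the topology-coincidence set has been produced. Apart from this routine subtlety the argument is essentially a direct application of the machinery already assembled in the previous section.
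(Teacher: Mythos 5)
Your argument is exactly the one the paper intends: the paper offers no proof of this theorem beyond the remark that it follows ``immediately'' from Corollary~\ref{coincidence}, and your write-up (extract witness semimetrics from the definition of proper virtual continuity, add them to the prescribed ones, pass to a large subset where the two topologies coincide, and transfer continuity along the coincidence) is the correct way to fill that in. For admissible \emph{metrics} $\rho_X,\rho_Y$ the proof is complete.

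One caveat: the point you dismiss as a ``routine subtlety'' is not routine. Lemma~\ref{equivalence} uses the separation axiom of $\rho_1$ essentially --- the final contradiction is precisely ``$\rho_1(x,y)=0$ yet $\rho(x,y)\geq\delta$'' --- and its conclusion genuinely fails for semimetrics: with $\rho_X\equiv 0$ the induced topology on $X'$ is indiscrete, so continuity of $f$ on $(X'\times Y',\rho_X\times\rho_Y)$ would force $f$ to be independent of $x$ on $X'\times Y'$, which already fails for the properly virtually continuous function $f(x,y)=xy$ on $[0,1]^2$. Your second fix does not close this either: coincidence of the topologies of $\rho_X+d_X$ and $\sigma_X+d_X$ yields continuity of $f$ with respect to $\rho_X+d_X$, which is strictly finer than $\rho_X$, and the auxiliary metric cannot simply be ``removed at the end.'' So the statement as literally printed (arbitrary admissible \emph{semi}metrics) is false; this is a defect of the paper's own formulation rather than of your argument, and the theorem should be read with $\rho_X,\rho_Y$ metrics, or with continuity understood on the associated metric quotients.
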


On the other hand choosing metrics in a special way we may force sets
$X',Y'$ from the definition to have the full measure:

\begin{theorem}\label{polnayamera}
Let function $f(\cdot,\cdot)$ be virtually continuous.
Then there exist sets $X'\subset X$, $Y'\subset Y$ of full measure and admissible semimetrics
$\rho_X,\rho_Y$ on $X',Y'$ respectively such that
 $f$ is continuous on $(X'\times Y',\rho_X\times \rho_Y)$.
 \end{theorem}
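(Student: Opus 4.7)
The plan is to combine the conclusions of Theorem \ref{arbitrarymetrics} over a sequence $\varepsilon_n = 2^{-n}$ into a single pair of admissible semimetrics via a Borel--Cantelli style exhaustion. After replacing $f$ by a properly virtually continuous representative (which agrees with $f$ on a set of full $\mu\otimes\nu$-measure), I would fix once and for all admissible metrics $\rho_X^0$ on $X$ and $\rho_Y^0$ on $Y$, and for each $n\ge 1$ apply Theorem~\ref{arbitrarymetrics} to produce measurable sets $X_n\subset X$, $Y_n\subset Y$ with $\mu(X\setminus X_n)<2^{-n}$, $\nu(Y\setminus Y_n)<2^{-n}$, on whose product $f$ is continuous with respect to $\rho_X^0\times\rho_Y^0$. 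Setting
\[
X' \;=\; \bigcup_{N\ge 1}\bigcap_{n\ge N} X_n,\qquad Y' \;=\; \bigcup_{N\ge 1}\bigcap_{n\ge N} Y_n,
\]
the Borel--Cantelli lemma yields $\mu(X')=\nu(Y')=1$, and every point of $X'$ lies in all but finitely many $X_n$ (analogously for $Y'$).

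The technical heart of the proof is to design the new metrics so that convergence forces a sequence to enter each relevant $X_n$ eventually. I would set
\[
\rho_X(x,x') \;=\; \rho_X^0(x,x')\;+\;\sum_{n=1}^{\infty} 2^{-n}\,\bigl|\mathbf{1}_{X_n}(x)-\mathbf{1}_{X_n}(x')\bigr|,
\]
and define $\rho_Y$ analogously. Admissibility of the added term is clear: it is the pull-back of the standard Polish metric on $\{0,1\}^{\mathbb N}$ under the Borel map $x\mapsto(\mathbf{1}_{X_n}(x))_{n\ge 1}$, hence measurable in two variables and separable on the full-measure subset where all the characteristic functions are defined; a sum of admissible semimetrics is again admissible, as used in the proof of Lemma~\ref{equivalence}. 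The restrictions of $\rho_X$, $\rho_Y$ to $X'$, $Y'$ give the required semimetrics.

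To verify continuity of $f$ on $(X'\times Y',\rho_X\times\rho_Y)$, I would fix $(x,y)\in X'\times Y'$ and choose $N$ so large that $x\in X_n$ and $y\in Y_n$ for every $n\ge N$. If $(x_k,y_k)\in X'\times Y'$ satisfies $\rho_X(x_k,x)+\rho_Y(y_k,y)\to 0$, then in particular $\rho_X^0(x_k,x)\to 0$, $\rho_Y^0(y_k,y)\to 0$, and the $\{0,1\}$-valued quantity $|\mathbf{1}_{X_N}(x_k)-\mathbf{1}_{X_N}(x)|$ tends to $0$, hence vanishes for large $k$. Since $\mathbf{1}_{X_N}(x)=1$ this forces $x_k\in X_N$ eventually, and likewise $y_k\in Y_N$ eventually. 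Thus the tail of $(x_k,y_k)$ lies in $X_N\times Y_N$ and converges there in $\rho_X^0\times\rho_Y^0$, so the continuity of $f|_{X_N\times Y_N}$ (delivered by Theorem~\ref{arbitrarymetrics}) gives $f(x_k,y_k)\to f(x,y)$.

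The main obstacle is precisely this promotion from ``continuity on each $X_n\times Y_n$'' to ``continuity on a product of full-measure sets''. With $\rho_X^0$ alone, a convergent sequence need not belong to any given $X_N$ from some point on, so local continuity cannot be invoked. Adding the $\{0,1\}^{\mathbb N}$-pullback term makes membership in each $X_n$ detectable by the topology, turning the sequence of ``almost'' conclusions into one genuine conclusion on $X'\times Y'$; the rest of the argument is a routine Borel--Cantelli exhaustion.
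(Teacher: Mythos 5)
Your proof is correct and follows essentially the same route as the paper: the paper also applies Theorem~\ref{arbitrarymetrics} with $\varepsilon_n=2^{-n}$ to fixed background metrics, takes $X'=\bigcup_N\bigcap_{n>N}X_n$, and augments the metric by the weighted sum $\sum_n 2^{-n}\rho_{X;n}$ of cut semimetrics, which is precisely your $\sum_n 2^{-n}\bigl|\mathbf{1}_{X_n}(x)-\mathbf{1}_{X_n}(x')\bigr|$ term, with the identical eventual-membership argument for continuity. The only (harmless) difference is that you make explicit the preliminary passage to a properly virtually continuous representative, which the paper leaves implicit.
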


\begin{proof}
Fix admissible metrics $\sigma_X$, $\sigma_Y$ on $X,Y$ respectively.
For any $n$ use a theorem \ref{arbitrarymetrics} and find the
sets $X_n\subset X$ and $Y_n\subset Y$ of measure
at least $1-2^{-n}$ so that  $f$ is continuous
on $(X_n\times Y_n,\sigma_X\times \sigma_Y)$. Define the cut
semimetrics on $X$:
\begin{equation*}
\rho_{X;n}(x,x')=
\begin{cases} 0 & \text{if $x,x'\in X_n$ or $x,x'\notin X_n$,}
\\
1 &\text{if $x\in X_n,x'\notin X_n$ or $x'\in X_n,x\notin X_n$}.
\end{cases}
\end{equation*}
Define a set of full measure
$X'=\cup_{n=1}^{\infty} \cap_{k>n} X_k$.
Next, define the metric $\rho_X=\sigma_X+\sum_n 2^{-n} \rho_{X;n}$
(it is easy to verify that $\rho_X$ is admissible metric).
Analogously define the set $Y'$ and the metric $\rho_Y$.
Let's prove that the function $f(x,y)$ is continuous
on $(X'\times Y',\rho_X\times \rho_Y)$.
Consider converging sequences
$x_n\rightarrow x_0$, $y_n\rightarrow y_0$ in $(X',\rho_X)$, $(Y',\rho_Y)$
respectively.
Let $N$ be so large that $x_0\in X_N$, $y_0\in Y_N$.
Then convergence with respect to the semimetric $\rho_{X;n}$
implies that $x_n\in X_N$ for all large enough $n$,
analogously $y_n\in Y_N$ for large $n$. Now convergence
$f(x_n,y_n)\rightarrow f(x_0,y_0)$ follows from the continuity of
$f$ on $(X_N\times Y_N,\sigma_X\times \sigma_Y)$.
\end{proof}

Establish the following corollary:

\begin{proposition}\label{proper-coinciding} If properly
virtually continuous functions
$f(x,y)$, $g(x,y)$ coincide on the set of full
measure in $X\times Y$ , then there exist
sets $X'\subset X$, $Y'\subset Y$ of full measure
such that $f(x,y)=g(x,y)$ for all points $x\in X'$, $y\in Y'$.
\end{proposition}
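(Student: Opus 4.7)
The plan is to apply Theorem \ref{polnayamera} to both $f$ and $g$, merge the resulting semimetrics so that the difference $h:=f-g$ becomes continuous on a common product of full-measure sets, and then restrict to the topological supports of $\mu$ and $\nu$ in those semimetrics. Continuity of $h$ together with $h=0$ almost everywhere will then force $h=0$ pointwise on the product of supports, which have full measure by separability.

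First, I would apply Theorem \ref{polnayamera} to $f$ and get full-measure sets $X_f\subset X$, $Y_f\subset Y$ and admissible semimetrics $\rho_X^f$, $\rho_Y^f$ on which $f$ is continuous, and do the same for $g$ to obtain $X_g$, $Y_g$, $\rho_X^g$, $\rho_Y^g$. Setting $X_0:=X_f\cap X_g$, $Y_0:=Y_f\cap Y_g$ and $\rho_X:=\rho_X^f+\rho_X^g$, $\rho_Y:=\rho_Y^f+\rho_Y^g$, one has $\mu(X_0)=1$, $\nu(Y_0)=1$; the sums are admissible semimetrics (sum of admissible semimetrics is admissible, as is standard), and by construction both $f$ and $g$ — hence also $h=f-g$ — are continuous on $(X_0\times Y_0,\rho_X\times \rho_Y)$. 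By hypothesis, $h$ vanishes on a set of full $\mu\times\nu$-measure.

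Next, let $X'$ be the topological support of $\mu$ in the separable semimetric space $(X_0,\rho_X)$, and $Y'$ the support of $\nu$ in $(Y_0,\rho_Y)$. Separability of $(X_0,\rho_X)$ (built into admissibility) implies that the complement of $X'$ is a countable union of open $\mu$-null sets, so $\mu(X')=1$; similarly $\nu(Y')=1$. The defining property of the support is that $\mu(U)>0$ for every $\rho_X$-open neighborhood $U\subset X_0$ of any point of $X'$, and analogously for $Y'$.

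Finally, suppose for contradiction that $h(x_0,y_0)\ne 0$ for some $(x_0,y_0)\in X'\times Y'$. By continuity of $h$ on $(X_0\times Y_0,\rho_X\times\rho_Y)$, there exist $\rho_X$-open $U\ni x_0$ and $\rho_Y$-open $V\ni y_0$ such that $h\ne 0$ on $U\times V$. But then $(\mu\times\nu)(U\times V)=\mu(U)\nu(V)>0$, contradicting that $h=0$ almost everywhere. Hence $f(x,y)=g(x,y)$ for every $(x,y)\in X'\times Y'$. The only delicate point — what I would call the chief technical obstacle — is the well-definedness and full-measure property of the support in an admissible (semi)metric; this is handled by the separability clause in the definition of admissibility and by the inner regularity given by Proposition~1.
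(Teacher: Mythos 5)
Your proposal is correct and follows essentially the same route as the paper: apply Theorem \ref{polnayamera} to both functions, intersect the full-measure sets and sum the semimetrics, pass to the topological supports of $\mu$ and $\nu$, and conclude that the (open) set where $f\ne g$ must be empty since any nonempty open rectangle in the product of supports has positive measure. The only difference is cosmetic — you argue by contradiction at a point via a basic open rectangle, while the paper states directly that the open set $\{f\ne g\}$ is either empty or of positive measure.
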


\begin{proof} Apply Theorem \ref{polnayamera}
to both functions $f$, $g$. We may suppose that corresponding sets of full measure
$X'\subset X$, $Y'\subset Y$ may coincide for $f$ and $g$
(intersect sets for $f$ and for $g$), as well
as semimetrics $\rho_X$, $\rho_Y$ (sum up the semimetrics for
$f$ and for $g$). Moreover, we may suppose that $X'$, $Y'$ are supporters
of the measures
$\mu|_{X'}$, $\nu|_{Y'}$. Now note that the set of points
 $(x,y)\in X'\times Y'$, for which
$f(x,y) \ne g(x,y)$, is open in $X'\times Y'$, hence it
either have a positive measure (but this impossible by our
assumption) or is empty (as desired).
\end{proof}

Thus all properly virtually continuous functions, which are equivalent to a
given virtually continuous function, coincide on a product of sets  having full measure.

It is useful to think about a function of two variables on $X\times Y$
as a map from  $X$ to the space of functions on $Y$
(i.e. $f(x,y)\equiv f_x(y)$). See details on using this viewpoint for classification
of measurable functions in \cite{VS}.
Virtual continuity may be expressed in these terms by the following equivalent definition:

\begin{theorem}\label{compactness} The following properties
of a function $f(\cdot,\cdot)$ are equivalent\textup:
\begin{itemize}
\item[\textup{(i)}] $f$ is virtually continuous\textup;
\item[\textup{(ii)}] for any $\varepsilon>0$ there exist sets
$X'\subset X,  Y'\subset Y$ having measure not less than $1-\eps$ and
a semimetric $\rho_Y$ on $Y'$\textup,
so that the function $f_x(\cdot)$ is equivalent to a continuous
function on $(Y',\rho_Y)$  for almost all $x\in X'$\textup;
\item[\textup{(iii)}] for any$\eps>0$ there exist sets
$X'\subset X$\textup,
 $Y'\subset Y$ having measure not less then $1-\eps$
such that the set of functions $f_x(\cdot)$ on $Y'$
\textup(where variable $x$  runs over $X'$\textup)
is totally bounded
\textup(precompact\textup) as a metric subspace in $L_{\infty}(Y')$;
\item[\textup{(iv)}] for any $\eps>0$ there exist
sets $X'\subset X$\textup,
 $Y'\subset Y$ having measure not less then $1-\eps$ such that
the set of functions $f_x(\cdot)$ on $Y'$
\textup(where variable $x$  runs over $X'$\textup)
is separable
\textup(precompact\textup) as a metric subspace in $L_{\infty}(Y')$.
\end{itemize}
\end{theorem}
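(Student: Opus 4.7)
My plan is to close the cycle
(i)$\Rightarrow$(ii)$\Rightarrow$(iv)$\Rightarrow$(iii)$\Rightarrow$(i),
with (iii)$\Rightarrow$(iv) as an automatic free implication since total
boundedness implies separability. The step (i)$\Rightarrow$(ii) is
immediate: fixing $x_0\in X'$, the slice $y\mapsto f(x_0,y)$ is the
composition of the isometric embedding $y\mapsto(x_0,y)$ into $X'\times Y'$
with the jointly continuous $f$. For (ii)$\Rightarrow$(iv), by the inner
regularity proposition at the start of the paper I would shrink $Y'$ to a
compact subset $K\subset Y'$ in $\rho_Y$ of measure still exceeding $1-\varepsilon$;
then for almost every $x\in X'$ the continuous representative of $f_x|_K$
lies in the separable Banach space $C(K)\subset L_\infty(K)$, and
separability of $\{f_x\}_{x\in X'}$ in $L_\infty(K)$ follows.

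For (iv)$\Rightarrow$(iii), I consider the map
$\Phi\colon X'\to L_\infty(Y')$, $\Phi(x)=f_x$. By (iv) its image lies
in a separable subset whose closure $\overline F\subset L_\infty(Y')$ is
complete and separable, hence Polish. Joint measurability of $f$ gives
measurability of each scalar map $x\mapsto\|f_x-f_{x_n}\|_{L_\infty(Y')}$
for a countable dense family $\{f_{x_n}\}\subset\overline F$, which suffices
to make $\Phi$ Borel measurable into $\overline F$. The generalized Luzin
Corollary, applied with any fixed admissible metric on $X$, then furnishes
a subset of measure $>1-\varepsilon$ on which $\Phi$ is continuous; passing
further to a compact subset $X''$ of almost full measure via inner
regularity makes $\Phi(X'')$ compact, hence totally bounded, in $L_\infty(Y')$.

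For (iii)$\Rightarrow$(i) I would set
$\rho_X(x,x')=\|f_x-f_{x'}\|_{L_\infty(Y')}$ and
$\rho_Y(y,y')=\|f^y-f^{y'}\|_{L_\infty(X')}$. A short $\delta$-net argument
shows that total boundedness of $\{f_x\}_{x\in X'}$ forces the symmetric
total boundedness of $\{f^y\}_{y\in Y'}$: covering $X'$ by finitely many
$\delta$-classes of an $L_\infty$-net $\{f_{x_i}\}_{i\le N}$ reduces
$y\mapsto f^y$ to a bounded map into $\mathbb R^N$ up to $O(\delta)$ error.
Both semimetrics are therefore separable, and measurability follows from
the essential-sup representation together with Fubini. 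The triangle
inequality then yields
$|f(x,y)-f(x',y')|\le\rho_X(x,x')+\rho_Y(y,y')$
almost everywhere. The main technical obstacle is upgrading this
almost-everywhere bound to a pointwise estimate on a product of
full-measure subsets, as required by proper virtual continuity: I would
fix countable dense sequences $\{x_k\}\subset X'$, $\{y_l\}\subset Y'$ in
the respective semimetrics, delete the countable union of null sets off
which the pairwise inequalities
$|f(x_k,y)-f(x_l,y)|\le\rho_X(x_k,x_l)$
and their symmetric counterparts fail, and extend $f$ uniformly continuously
in $\rho_X\times\rho_Y$ to the remaining full-measure product. The
extension coincides with $f$ almost everywhere and is jointly continuous,
establishing virtual continuity.
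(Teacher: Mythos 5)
Your cycle (i)$\Rightarrow$(ii)$\Rightarrow$(iv)$\Rightarrow$(iii)$\Rightarrow$(i) is a genuinely different organization from the paper's, which proves (ii)$\Rightarrow$(iii) and (iv)$\Rightarrow$(i) directly. Your (ii)$\Rightarrow$(iv) and (iv)$\Rightarrow$(iii) are sound and in fact close in spirit to the paper's (ii)$\Rightarrow$(iii): both rest on showing that $x\mapsto f_x$ is a measurable map into a Polish space of functions and then invoking inner regularity (you route this through Luzin plus a compact subset of $X'$, the paper through regularity of the pushforward measure on $C(Y_1)$); your measurability argument via $x\mapsto\|f_x-f_{x_n}\|_{L_\infty}$ for a countable dense subfamily is fine. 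Where the routes really diverge is the closing step. The paper closes via (iv)$\Rightarrow$(i): it builds an auxiliary admissible semimetric on $Y'$ making a countable dense set of slices continuous, passes to a compact $Y_1$ equal to the support of the restricted measure so that the $C(Y_1)$ and $L_\infty(Y_1)$ distances coincide, and then gets honest continuous representatives $f'_x$ and a semimetric $\rho_{X'}(x,x')=\|f'_x-f'_{x'}\|_{C(Y_1)}$ with pointwise (not a.e.) estimates for free. You instead define both $\rho_X$ and $\rho_Y$ as $L_\infty$ distances of slices and then try to upgrade the a.e.\ Lipschitz bound by hand.

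Two concrete cautions about that last step. First, your symmetric total-boundedness claim for $\{f^y\}$ is only true after discarding a null subset of $Y'$ (and needs the slices to be essentially bounded): for $x\in X_i$ the bound $\|f_x-f_{x_i}\|_{L_\infty(Y')}\le\delta$ controls $|f(x,y)-f(x_i,y)|$ only for a.e.\ $y$, so one must first apply Fubini to produce a full-measure $Y^*$ on which the finite-dimensional reduction is valid. Second, and more importantly, in the final extension the inequality $|f(x_k,y)-f(x_k,y_l)|\le\rho_Y(y,y_l)$ is an essential supremum over $x$, so it need not hold at the particular points $x_k$ unless those points are chosen \emph{after} the Fubini surgery, from the full-measure set of $x$ for which all the countably many a.e.\ inequalities indexed by $l$ already hold (and likewise for the $y_l$). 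As written, "fix countable dense sequences, then delete null sets" has the quantifiers in the wrong order and would not close; with the order reversed (clean first, then pick dense sequences inside the cleaned, still dense, full-measure sets) the argument does go through, and one final Fubini argument shows the uniformly continuous extension agrees with $f$ a.e. The paper's device of passing to a compact support where $C$ and $L_\infty$ coincide is precisely what lets it avoid this bookkeeping entirely.
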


\begin{proof} Clearly (i) implies (ii) and (iii) implies (iv).

Let's prove (iii) assuming (ii).
Removing appropriate sets of zero measure from $X'$, $Y'$ we
may suppose that the function $f(x, \cdot)$ is equivalent
to a continuous function on $Y'$ for any $x\in X'$ and the
function $f(y, \cdot)$ is equivalent
to a continuous function on $X'$ for any $y\in Y'$.
Choose a compact subset  $Y_1\subset Y'$
such that $\nu(Y_1)>1-2\varepsilon$.
Next, replace  $Y_1$ to a supporter
$\supp(\nu|_{Y_1})$ of the measure $\nu$ restricted to $Y_1$.
Now for continuous functions on $Y_1$ teh distances in
$C(Y_1)$ and in $L_{\infty}(Y_1)$ coincide.
Let $\cal{S}$ be a countable family of open balls
in $Y_1$, which form a base of topology.
For $x\in X'$ denote by $f'_x(\cdot)$ a continuous
function on $Y_1$, which is equivalent to $f(x,\cdot)$.
Let's prove that a map $\Phi\colon x\mapsto f'_x(\cdot)$
is measurable as a map from  $X$ into $C(Y_1)$
(with Borel sigma-algebra).
It suffices to check that a preimage of a ball
$$
X_g:=\{x\in X\colon  \forall y\in Y_1 \quad |f'_x(y)-g(y)|\leq r\}
$$
is measurable for any continuous function $g\in C(Y_1)$
and any positive $r$. Note that an inequality
$|f'_x(y)-g(y)|\leq r$ holds for all
 $y\in Y_1$ if and only if all inequalities
$$
\frac1{\nu(B)}\left|\int_B f'_x(y)d\nu(y)-\int_B g(y) d\nu(y)\right|\leq r
$$
for balls $B\in {\cal S}$ hold.
But under integral sign the function
$f'_x(\cdot)$ may be replaced to an equivalent function $f(x,\cdot)$,
and a map  $x\mapsto \int_B f(x,y)d\nu(y)$ is measurable in $x$.
Thus the set $X_g$ is measurable as a countable intersection of
measurable sets.

So $\Phi$-preimage of the measure $\mu$ on $X'$ is a Borel measure on $C(Y_1)$.
The space $C(Y_1)$ is complete separable, hence this measure is inner
regular and there exists a compact set $K\subset C(Y_1)$
such that $\mu(\Phi^{-1}(K))>1-2\eps$. But $\eps$ is arbitrary,
thus the function $f(x,y)$ satisfies (iii).

It remains to show virtual continuity of  $f$ assuming (iv).
Fix $\eps>0$. Choose $X',Y'$ as in (iv).
Denote by $K$ a separable in $L_{\infty}(Y')$ family of functions of the form
$f(x,\cdot)$. Let $K'$ be a countable dense subfamily of $K$.
Let $\rho$ be an admissible semimetric on $Y'$ such that any function from $K'$
is $\rho$-continuous
(such a metric may be constructed, for example, as a uniformly convergent series
for metrics taken for specific functions from $K'$).
Let $Y_1\subset Y'$ be a compact set in $\rho$ having measure at least $1-2\eps$.
Replace $Y_1$ (if necessary) to the supporter of
the restriction of the measure $\nu$ on $Y_1$. Now any open ball in $Y_1$
has a positive measure. It implies that for continuous functions on $Y_1$
distances in $C(Y_1)$ and in $L_{\infty}(Y_1)$ coincide.
Thus any function $f(x,\cdot)$, $x \in X'$, is equivalent to a unique continuous
function $f'(x,\cdot)$ on $Y_1$.
Define a semimetric on  $X'$ by the formula
$$\rho_{X'}(x,x')=\|f'(x,\cdot)-f'(x',\cdot)\|_{C(Y_1)}.$$
It is admissible since $C(Y_1)$ is separable. The function $f'$ is continuous on
$(X'\times Y_1,\rho_{X'}\times \rho)$. Indeed, if $x_n\rightarrow x_0$,
$y_n\rightarrow y_0$, then
$$
|f'(x_n,y_n)-f'(x_0,y_0)|\leq |f'(x_n,y_n)-f'(x_0,y_n)|+|f'(x_0,y_n)-f'(x_0,y_0)|,
$$
and both summands tend to 0. The function $f'$
is properly virtually continuous on $X' \times Y_1$.
By Fubini theorem it is equivalent to the initial function $f$ on
$X'\times Y_1$. Since $\eps>0$ it implies that  $f$ is virtually continuous.
\end{proof}

It's remarkable that the spaces $X$ and $Y$ (i.e. arguments of the function) play different roles in (ii--iv).
However, a posteriori the property appears to be symmetric under the change of order of variables.
This is another demonstration of the non-triviality of the virtual continuity concept.

Above characteristics of virtual continuity easily imply that virtual continuous functions
form a nowhere dense subset in the space of all functions of two variables (with measure convergence topology).

\subsection{Virtual topology}

A function is measurable if and only if for any open set its preimage is open.
Virtual continuity of a function of two variables admits a similar definition.

\begin{defe} A measurable set $Z\subset X\times Y$ is called virtually open, if
for some subsets
 $X'\subset X$, $Y'\subset Y$ of full measure the set
$Z \cap (X'\times Y')$ is a countable union of measurable rectangles $R_i=A_i\times B_i$,
$i=1,2,\dots$. Set is called virtually closed if it complement is virtually
open.
\end{defe}

The sense (and the name) of this concept is explained by the following

\begin{lemma}\label{virtual-structure}
1) Let $X'$, $Y'$ be sets of full measure in $X$, $Y$ respectively;
$\rho_X$, $\rho_Y$ be admissible semimetrics on $X'$, $Y'$; a set $Z\subset X\times Y$
is so that $Z\cap X'\times Y'$ is open in $(X'\times Y',\rho_X\times \rho_Y)$. Then the set
$Z$ is virtually open.

2) Vice versa, for any virtually open set in $X\times Y$ there exist appropriate
$X'$, $Y'$ of full measure and admissible semimetrics $\rho_X$, $\rho_Y$ such that
$Z\cap X'\times Y'$ is open in $(X'\times Y',\rho_X\times \rho_Y)$.
\end{lemma}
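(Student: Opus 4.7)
The plan is to handle the two implications separately; neither requires much beyond constructions already used earlier in the paper, and the main ingredients are second countability (for $1 \Rightarrow $ open $\Rightarrow$ virtually open) and cut semimetrics (for the converse).

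For part (1), the idea is to reduce the product topology on $(X'\times Y',\rho_X\times\rho_Y)$ to a second-countable one and then invoke Proposition 1. By admissibility there exist full-measure subsets $X''\subset X'$ and $Y''\subset Y'$ on which $\rho_X$ and $\rho_Y$ are separable; their product is then second-countable, with a countable base of rectangles of open balls $U_i\times V_i$. The set $Z\cap(X''\times Y'')$ is open in this topology and hence a countable union of such basic rectangles. By Proposition 1 the Borel $\sigma$-algebras of $(X'',\rho_X)$ and $(Y'',\rho_Y)$ sit inside the measure $\sigma$-algebras, so each $U_i\subset X$ and $V_i\subset Y$ is measurable. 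This is exactly the decomposition required to exhibit $Z$ as virtually open (with $X''$, $Y''$ in place of the full-measure subsets in the definition).

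For part (2), I would use the cut semimetric construction already exploited in Theorem \ref{polnayamera}. Write $Z\cap(X'\times Y')=\bigcup_{i\ge 1}A_i\times B_i$ with $A_i\subset X$, $B_i\subset Y$ measurable. For each $i$ define
\[
\rho_{X;i}(x,x')=|\chi_{A_i}(x)-\chi_{A_i}(x')|,\qquad \rho_{Y;i}(y,y')=|\chi_{B_i}(y)-\chi_{B_i}(y')|,
\]
each of which is an admissible $\{0,1\}$-valued semimetric. Set
\[
\rho_X=\sum_{i=1}^\infty 2^{-i}\rho_{X;i},\qquad \rho_Y=\sum_{i=1}^\infty 2^{-i}\rho_{Y;i};
\]
admissibility of these series is verified exactly as in the proof of Theorem \ref{polnayamera} (the map $x\mapsto(\chi_{A_i}(x))_{i\ge 1}$ pushes $\mu$ forward to a Borel measure on the Polish space $\{0,1\}^{\mathbb N}$, yielding a full-measure separable subset). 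In $(X,\rho_X)$ each $A_i$ is clopen: if $x\in A_i$ and $\rho_X(x,x')<2^{-i}$, then $\rho_{X;i}(x,x')<1$, which forces $\rho_{X;i}(x,x')=0$, whence $x'\in A_i$. Analogously each $B_i$ is clopen in $(Y,\rho_Y)$. Therefore each $A_i\times B_i$ is open in $(X'\times Y',\rho_X\times\rho_Y)$, and so is their union $Z\cap(X'\times Y')$.

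I do not expect any serious obstacle. The only step requiring care is the admissibility of the weighted sum $\rho_X$ (and its $Y$-analogue), but this is essentially identical to the construction used in the proof of Theorem \ref{polnayamera}, and I would simply refer to it rather than redo the verification. Everything else is bookkeeping: part (1) is an application of second countability together with Proposition 1, and part (2) reduces by the cut-semimetric trick to the elementary observation that a countable union of basic open rectangles is open in a product semimetric topology.
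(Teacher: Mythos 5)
Your proof is correct and follows essentially the same route as the paper: part (1) via passing to full-measure separable subsets and using a countable base of measurable rectangles for the product topology, and part (2) via a weighted sum of cut semimetrics making each $A_i$ and $B_i$ clopen. The only cosmetic difference is that you make explicit (via the map into $\{0,1\}^{\mathbb N}$ and the clopenness computation) details the paper leaves as remarks.
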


\begin{proof} 1). Replacing set $X'$, $Y'$ onto appropriate subsets of full measure we may
suppose that $(X',\rho_X)$, $(Y',\rho_Y)$ are separable semimetric spaces, in which all balls
are $\mu$-, $\nu$-measurable respectively. Then topologies of those
spaces have separable bases consisting measurable sets, thus the topology
of direct product has a countable bases consisting measurable rectangles.
This just implies that an open subset of $X'\times Y'$ is a countable
union of measurable rectangles, hence it is virtually open.

2) For a given countable family of measurable
sets in  $X$ we may construct an admissible semimetric, in which they
are all open (a possible construction is to sum up cut-metrics with rapidly decaying
coefficients). Doing this for $X$-projections of countably many rectangles whose union
is our virtually open set we construct a semimetric on $X$ (strictly speaking, on $X'$), analogously on $Y$.
\end{proof}

\begin{theorem}\label{virtual} A measurable function $f(x,y)$ on $X\times Y$
is properly virtually continuous if and only if  $f$-preimage of any open
set on the real line is virtually open.
\end{theorem}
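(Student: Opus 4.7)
The plan is to derive each direction of the equivalence from the appropriate half of Lemma~\ref{virtual-structure}, which has already done most of the topological bookkeeping.

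For the ``only if'' direction, I would first observe that a properly virtually continuous $f$ is in particular virtually continuous, so Theorem~\ref{polnayamera} supplies full-measure sets $X' \subset X$, $Y' \subset Y$ and admissible semimetrics $\rho_X, \rho_Y$ on them with $f$ continuous on $(X' \times Y', \rho_X \times \rho_Y)$. For any open $U \subset \mathbb{R}$ the set $f^{-1}(U) \cap (X' \times Y')$ is then open in the product metric topology, and part~(1) of Lemma~\ref{virtual-structure} immediately yields that $f^{-1}(U)$ is virtually open. This half is essentially a one-line consequence of the tools already established.

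For the converse, I would fix a countable base $\{U_n\}_{n=1}^\infty$ for the topology of $\mathbb{R}$, say the open intervals with rational endpoints. By hypothesis each $f^{-1}(U_n)$ is virtually open, so part~(2) of Lemma~\ref{virtual-structure} produces full-measure sets $X_n' \subset X$, $Y_n' \subset Y$ and admissible semimetrics $\rho_X^{(n)}, \rho_Y^{(n)}$ such that $f^{-1}(U_n) \cap (X_n' \times Y_n')$ is open in $(X_n' \times Y_n', \rho_X^{(n)} \times \rho_Y^{(n)})$. I would then set $X' = \bigcap_n X_n'$, $Y' = \bigcap_n Y_n'$ (still of full measure) and combine the individual semimetrics into single admissible semimetrics via the standard weighted sum $\rho_X = \sum_n 2^{-n} \min(\rho_X^{(n)}, 1)$ on $X'$, and analogously $\rho_Y$ on $Y'$.

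Finally I would check that $f$ is continuous on $(X' \times Y', \rho_X \times \rho_Y)$: since $\rho_X$ dominates each $2^{-n} \min(\rho_X^{(n)},1)$, convergence in $\rho_X$ forces convergence in every $\rho_X^{(n)}$, and similarly for $\rho_Y$, so each $f^{-1}(U_n) \cap (X' \times Y')$ remains open in the combined topology. Since the $U_n$ form a base, every preimage of an open subset of $\mathbb{R}$ is a union of such sets and hence open, giving proper virtual continuity (with sets of even full measure). The main technical point to verify --- more a nuisance than a genuine obstacle --- is that the weighted sum of semimetrics is indeed admissible, which is a standard construction already used in the proof of Theorem~\ref{polnayamera}.
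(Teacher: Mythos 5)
Your proposal is correct and follows essentially the same route as the paper: the ``only if'' direction via Theorem~\ref{polnayamera} together with part~(1) of Lemma~\ref{virtual-structure}, and the converse by taking rational intervals as a countable base, intersecting the full-measure sets, and combining the semimetrics by a weighted sum of cut-type semimetrics (the paper builds these cut semimetrics directly for the countably many rectangles, but that is exactly the content of part~(2) of Lemma~\ref{virtual-structure} which you invoke). No gaps.
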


\begin{proof}
(``Only if'' part.) If a function $f$ is properly virtually open, then by the theorem \ref{polnayamera}
it is continuous on the set product $(X'\times Y',\rho_X,\rho_Y)$, where $X'\subset X$, $Y'\subset Y$ have full measure
and $\rho_X,\rho_Y$ are admissible semimetrics. Then $f$-preimages of open sets on the real line have a structure
described in \ref{virtual-structure}.

(``If'' part.) Assume that $f$-preimage of any rational interval on the real
line is virtually open. Then there exist sets of full measure $X',Y'$
such that for the restriction $f:X'\times Y'\rightarrow \mathbb{R}$
all those preimages are countable unions of measurable rectangles in $X'\times Y'$
($X'$, $Y'$ may be defined as intersections of corresponding sets of full measure).
Considering series of cut semimetrics we may easily construct admissible
metrics on $X'$, $Y'$ such that each of those countably many rectangles is open.
Then the function $f$ is continuous with respect to a pair of constructed
admissible metrics on $X'\times Y'$, hence it is virtually continuous.
\end{proof}

 \subsection{Thickness}
Consider the space $X\times Y$ with product measure $\mu \times \nu$.
Choose two subalgebras in its sigma-algebra, defined by projections onto $X$ and $Y$.
We write $A\subsetmod B$ if $A,B \subset X\times Y$ and $\mu\times\nu(B\setminus A)=0$.
Also, we write $\leqmod$ or $\geqmod$, if the corresponding inequality holds
$\mu\times \nu-$almost everywhere.

\begin{defe}
For a measurable set $Z\subset X\times Y$ define its \emph{proper thickness} as
\begin{equation}\label{thickness}
\sthi(Z)=\inf\{\mu(\tilde X)+\nu(\tilde Y) \colon \tilde X \subset X,\, \tilde Y\subset Y,\, Z\subset (\tilde X\times Y)\cup (X\times \tilde Y)\}.
\end{equation}
The \emph{thickness} of a set $Z$ is defined as
$$
\thi(Z)=\inf\{\mu(\tilde X)+\nu(\tilde Y) \colon \tilde X \subset X,\, \tilde Y\subset Y,\, Z\subsetmod (\tilde X\times Y)\cup (X\times \tilde Y)\}.
$$
In other words,
\begin{equation}\label{not-proper-thickness}
\thi(Z)=\min\{\sthi(Z')\colon Z\subsetmod Z'\}.
\end{equation}
\end{defe}

Minimum in \eqref{not-proper-thickness} is always attained, because we may intersect a minimizing
sequence of sets.

The sets $\tilde X\times Y$, $X\times \tilde Y$ are just sets from chosen
subalgebras, so we may extend our definition to other choices of selected
subalgebras in a standard space.

The following properties of thickness are immediate:
\begin{itemize}
\item thickness of a set does not exceed 1 and equals 0 for and only for sets of measure 0;
\item thickness of a subset does not exceed a thickness of a set;
\item thickness of a set is not less than its measure;
\item thickness of a finite or countable union of sets does not exceed sum of thicknesses.
\end{itemize}

The following lemma is not hard too:

\begin{lemma}\label{virtual-thickness}  If a set $Z\subset X\times Y$ is virtually open, then
$\thi(Z)=\sthi(Z)$.
\end{lemma}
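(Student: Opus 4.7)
The inequality $\thi(Z)\leq\sthi(Z)$ is immediate from comparing the two defining infima, so the plan is to prove the reverse direction. The idea is to start from an almost-optimal cover $(U,V)$ witnessing $\thi(Z)$ and, exploiting the countable-rectangle structure supplied by virtual openness, enlarge $U$ and $V$ by null sets to obtain an honest cover.

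First I would fix $\eps>0$ and choose measurable $U\subset X$, $V\subset Y$ with $\mu(U)+\nu(V)\leq\thi(Z)+\eps$ and $Z\subsetmod(U\times Y)\cup(X\times V)$, so that the defect $Z_0:=Z\cap(U^c\times V^c)$ has product measure zero. Then I would invoke virtual openness to write $Z\cap(X_0\times Y_0)=\bigcup_i A_i\times B_i$ for some full-measure $X_0,Y_0$ and measurable rectangles with $A_i\subset X_0$, $B_i\subset Y_0$. The key observation is that $(A_i\cap U^c)\times(B_i\cap V^c)\subset Z_0$ has product measure zero, so for each $i$ at least one of $\mu(A_i\cap U^c)$ or $\nu(B_i\cap V^c)$ vanishes; this partitions the index set as $\mathbb N=I\sqcup J$ (ties broken arbitrarily).

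Finally I would set
\[
\tilde X=U\cup\bigcup_{i\in I}(A_i\cap U^c)\cup(X\setminus X_0),\qquad
\tilde Y=V\cup\bigcup_{i\in J}(B_i\cap V^c)\cup(Y\setminus Y_0).
\]
Every adjoined piece is null by construction, so $\mu(\tilde X)=\mu(U)$ and $\nu(\tilde Y)=\nu(V)$. For $i\in I$ we have $A_i\subset\tilde X$, hence $A_i\times B_i\subset\tilde X\times Y$; symmetrically for $i\in J$; the remaining portion $Z\setminus(X_0\times Y_0)$ is absorbed by the $(X\setminus X_0)$ and $(Y\setminus Y_0)$ summands. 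Hence $\sthi(Z)\leq\mu(\tilde X)+\nu(\tilde Y)=\mu(U)+\nu(V)\leq\thi(Z)+\eps$, and sending $\eps\to 0$ gives the claim.

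The only delicate point is the dichotomy step: for an arbitrary measurable set $Z$ with null defect $Z_0$, one cannot in general decide, point by point, whether each element of $Z_0$ should be absorbed on the $X$-side or the $Y$-side without violating the measure bound. Virtual openness is precisely what localizes this decision to one binary choice per rectangle, after which countable additivity guarantees that the union of the chosen null sides is itself null. Measurability of $\tilde X,\tilde Y$ as countable unions of measurable sets is automatic, and no further technicalities are expected.
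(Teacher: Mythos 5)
Your proof is correct and follows essentially the same route as the paper's: both pass to the rectangle decomposition $Z\cap(X_0\times Y_0)=\bigcup_i A_i\times B_i$, observe that the null defect forces $\mu(A_i\setminus U)=0$ or $\nu(B_i\setminus V)=0$ for each $i$, and then enlarge the almost-cover by countably many null sets to an honest cover. Your write-up merely makes explicit the bookkeeping for $X\setminus X_0$ and $Y\setminus Y_0$ that the paper handles by silently replacing $X$, $Y$ with full-measure subsets.
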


\begin{proof} It suffices to prove that $\sthi(Z)\leq \mu(\tilde X)+\nu(\tilde Y)$,
if $Z\subsetmod (\tilde X\times Y)\cup (X\times \tilde Y)$.
Replacing $X$, $Y$ to subsets of full measure we may suppose that $Z=\cup_{i=1}^{\infty} A_i\times B_i$.
Note that if
\begin{equation}\label{rectanglemod0}
A_i\times B_i\subsetmod (\tilde X\times Y)\cup (X\times \tilde Y),
\end{equation}
then either $A_i\subsetmod \tilde{X}$ or $B_i\subsetmod \tilde{Y}$.
In both cases we may add sets of 0 measure to
$\tilde{X}$, $\tilde{Y}$ so that $\subsetmod$ in \eqref{rectanglemod0}
becomes just $\subset$. Doing this for all
$i=1,2,\dots$ successively we get the desired inequality.
\end{proof}

The following lemma provides an equivalent and sometimes more useful
definition of the thickness.

\begin{lemma}\label{thick2}
For any $Z\subset X\times Y$ consider pairs of measurable functions
$f\colon X \to [0,1],$ $g\colon Y\to [0,1],$ for which
$f(x)+g(y)\geq \chi_{Z}(x,y)$ \textup(resp. $f(x)+g(y)\geqmod \chi_{_Z}(x,y)$\textup).
Then the proper thickness \textup(resp. thickness\textup) of the set
$Z$ is the infimum of $\int_X f d\mu+\int_Y g d\nu$. Moreover\textup,
this infimum is realized as well as infimum in \eqref{thickness}.
\end{lemma}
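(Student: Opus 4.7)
The plan is to establish the two identities by a thresholding (layer-cake) argument and then attain the infima via weak-$\ast$ compactness in $L^\infty$. For the identity $\sthi(Z)=\inf_{(f,g)}\bigl(\int f\,d\mu+\int g\,d\nu\bigr)$, one direction is immediate: any admissible $(\tilde X,\tilde Y)$ in \eqref{thickness} gives the admissible pair $(f,g)=(\chi_{\tilde X},\chi_{\tilde Y})$ with matching objective. For the reverse, given $f\colon X\to[0,1]$ and $g\colon Y\to[0,1]$ with $f(x)+g(y)\ge\chi_Z(x,y)$ pointwise, I set, for $t\in(0,1)$, $\tilde X_t=\{f>t\}$ and $\tilde Y_t=\{g\ge 1-t\}$. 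If $(x,y)\in Z$ had $x\notin\tilde X_t$ and $y\notin\tilde Y_t$, then $f(x)+g(y)<t+(1-t)=1$, a contradiction, so $Z\subset(\tilde X_t\times Y)\cup(X\times\tilde Y_t)$ for every such $t$. The layer-cake identities $\int_X f\,d\mu=\int_0^1\mu(\tilde X_t)\,dt$ and $\int_Y g\,d\nu=\int_0^1\nu(\tilde Y_t)\,dt$ give $\int_0^1[\mu(\tilde X_t)+\nu(\tilde Y_t)]\,dt=\int f+\int g$, so some $t$ realizes $\mu(\tilde X_t)+\nu(\tilde Y_t)\le\int f+\int g$, whence $\sthi(Z)\le\int f+\int g$. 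Performing the same thresholding with $\geqmod$ in place of $\geq$ yields inclusions $Z\subsetmod(\tilde X_t\times Y)\cup(X\times\tilde Y_t)$ valid for almost every $t$, giving the corresponding identity for $\thi(Z)$.

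For attainment of the $(f,g)$-infimum I would take a minimizing sequence $(f_n,g_n)$ of $[0,1]$-valued measurable functions and, by Banach--Alaoglu, extract a subsequence convergent in the weak-$\ast$ topologies of $L^\infty(X,\mu)$ and $L^\infty(Y,\nu)$ to a pair $(f^*,g^*)$. Testing against the constant function $1\in L^1$ shows $\int f^*\,d\mu+\int g^*\,d\nu$ equals the infimum, and testing $f_n(x)+g_n(y)\ge\chi_Z(x,y)$ against an arbitrary nonnegative $\phi\in L^1(\mu\times\nu)$ gives in the limit $f^*(x)+g^*(y)\ge\chi_Z(x,y)$ almost everywhere. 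This directly attains the thickness $(f,g)$-infimum. For the proper-thickness $(f,g)$-infimum and the infimum in \eqref{thickness}, I would run the extraction starting from characteristic-function pairs $(\chi_{\tilde X_n},\chi_{\tilde Y_n})$ of a minimizing sequence in \eqref{thickness}, so that $\int f^*+\int g^*=\sthi(Z)$, and then apply the thresholding above to $(f^*,g^*)$: the averaging identity $\int_0^1[\mu(\tilde X_t^*)+\nu(\tilde Y_t^*)]\,dt=\sthi(Z)$, combined with the lower bound $\mu(\tilde X_t^*)+\nu(\tilde Y_t^*)\ge\sthi(Z)$ holding after a null-set correction to the cover, forces equality for almost every $t$, yielding an admissible pair that realizes \eqref{thickness} and whose characteristic functions realize the proper $(f,g)$-infimum.

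The main obstacle is the gap between the almost-everywhere inequality produced by the weak-$\ast$ limit and the pointwise inequality required by the proper formulation; the thresholding device bridges this gap, but its success depends on being able to carry out the null-set correction that converts $\subsetmod$ inclusions into genuine $\subset$ inclusions without inflating the total $\mu+\nu$ mass past $\sthi(Z)$, and this is where one must exploit carefully the product-null structure of the exceptional set.
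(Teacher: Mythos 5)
Your thresholding argument for the identity itself is correct and is exactly the paper's: the sets $\{f\ge t\}$, $\{g\ge 1-t\}$ cover $Z$ for every $t$, and averaging over $t$ via the layer-cake formula gives $\sthi(Z)\le\int f+\int g$ (and likewise for $\thi$ with $\subsetmod$). Your attainment of the $\thi$-version of the $(f,g)$-infimum by weak-$\ast$ compactness is also fine, since there only an almost-everywhere inequality is needed.

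The genuine gap is in the attainment for the \emph{proper} thickness and for the infimum in \eqref{thickness}. A weak-$\ast$ (or weak $L^2$) limit only preserves the inequality $f^*(x)+g^*(y)\ge\chi_Z(x,y)$ up to a $\mu\times\nu$-null set $N$, and your plan hinges on a ``null-set correction'' that turns the resulting $\subsetmod$ covers into genuine covers without increasing mass. This correction is not available in general: a $\mu\times\nu$-null subset of $X\times Y$ need not be contained in $(\tilde X_0\times Y)\cup(X\times\tilde Y_0)$ with $\mu(\tilde X_0)=\nu(\tilde Y_0)=0$ --- the diagonal of $[0,1]^2$ is the standard counterexample, and it is precisely the kind of set on which $\sthi$ and $\thi$ differ ($\thi=0$ but $\sthi=1$). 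So if $Z\cap N$ meets such a ``fat'' null set, your thresholded pairs $(\tilde X_t^*,\tilde Y_t^*)$ are only mod-$0$ covers, the lower bound $\mu(\tilde X_t^*)+\nu(\tilde Y_t^*)\ge\sthi(Z)$ need not hold, and no genuine cover realizing \eqref{thickness} is produced. The paper avoids this by replacing weak compactness with Koml\'os's theorem: the Ces\`aro means $f_n'=(f_1+\dots+f_n)/n$, $g_n'$ converge almost everywhere, and one then \emph{defines} $f=\limsup_n f_n'$ and $g=\limsup_n g_n'$ pointwise on all of $X$ and $Y$; since each $(f_n,g_n)$ satisfies the inequality at every point, so does $(f,g)$, everywhere, and the thresholded sets are then genuine covers. (Equivalently, you could repair your argument by applying Mazur's lemma to your weak-$\ast$ convergent sequence and passing to an a.e.\ convergent subsequence of convex combinations, then taking pointwise limsups --- but some device producing everywhere-valid inequalities, not just a.e.\ ones, is indispensable here.)
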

\begin{proof}
Clearly, if sets $\tilde X \subset X$ and $\tilde Y \subset Y$ are such that
$Z\subset (\tilde X\times Y)\cup (X\times \tilde Y)$, then $f=\chi_{_{\tilde X}}$ and $g= \chi_{_{\tilde Y}}$
satisfy inequality
$f(x)+g(y)\geq \chi_{_Z}(x,y)$.
Thus we just need to prove that $\int_X f +\int_Y g \geq \sthi(Z)$ whenever
$f(x)+g(y)\geq \chi_{_Z}(x,y)$.
For any $t\in [0,1]$ define the sets
$X_t=\{x \in X\colon f(x)\geq t\}$ and $Y_t=\{y \in Y\colon g(y)\geq t\}$. Clearly, if $f(x)+g(y)\geq 1$, then
for any $t$ either
$x\in X_t$ or $y \in Y_{1-t}$. Thus for any $t$ we have
$
\chi_{_Z}(x,y) \leq \chi_{_{X_t}}(x)+\chi_{_{Y_{1-t}}}(y)
$.
Therefore $\sthi(Z) \leq \mu(X_t)+\nu(Y_{1-t})$. Integrating by $t$ we get
$$
\sthi(Z)\leq \int\limits_0^1  \mu(X_t)+\nu(Y_{1-t}) dt = \int\limits_X f + \int\limits_Y g.
$$
Moreover, if $\sthi(Z)=\int\limits_X f + \int\limits_Y g$, then for almost all $t\in [0,1]$
the infimum in \eqref{thickness} is realized on the pair of sets $(X_t,Y_{1-t})$.

It suffices to prove that there exists a minimizing pair of functions. Consider a minimizing
sequence $(f_n(x),g_n(y))$, $\int f_n+\int g_n\rightarrow  \sthi(Z)$.
By known Koml\'os theorem \cite{JK} we may suppose that a sequence $f_n':=(f_1+\dots+f_n)/n$
converges to some function $f$ almost everywhere in $X$, and  $g_n':=(g_1+\dots+g_n)/n$ converges to $g$ almost everywhere in $Y$\footnote{Only simple
version of Koml\'os theorem, in which functions are uniformly bounded, is used.}.

So, we may suppose that $f(x)=\limsup_n f_n'(x)$ for all $x\in X$, and
$g(y)=\limsup_n g_n'(y)$ for all $y\in Y$. It follows that
 $f(x)+g(y)\geq \chi_Z(x,y)$ for all  $x\in X$, $y\in Y$, hence a pair
$(f,g)$ is minimizing.
\end{proof}

Using lemma \ref{thick2} we may establish ``continuity of thickness from below'':
\begin{lemma}
Let $\{Z_n\}$ be an increasing sequence of measurable sets\textup, $Z=\cup_n Z_n$. Then $\thi(Z)=\lim\thi(Z_n),$
$\sthi(Z)=\lim \sthi(Z_n)$.
\end{lemma}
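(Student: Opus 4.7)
The plan is to prove $\sthi(Z) \leq \lim_n \sthi(Z_n)$ and $\thi(Z) \leq \lim_n \thi(Z_n)$; the opposite inequalities follow immediately from the monotonicity of thickness listed among its elementary properties. I would adapt the Komlós-based technique used in the proof of Lemma \ref{thick2}: start with optimal pairs for each $Z_n$, extract a subsequence so that the Cesàro averages converge a.e., and verify the limit pair still dominates $\chi_Z$.

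Concretely, by Lemma \ref{thick2} I pick, for each $n$, measurable functions $f_n\colon X\to [0,1]$ and $g_n\colon Y\to [0,1]$ realising the minimum for $Z_n$, so $f_n(x)+g_n(y) \geq \chi_{Z_n}(x,y)$ pointwise (for $\sthi$) or $\geqmod$ (for $\thi$), together with $\int f_n + \int g_n = \sthi(Z_n)$ (resp.\ $\thi(Z_n)$). Two successive applications of Komlós' theorem (first to $(f_n)$, then to $(g_n)$ along the resulting subsequence, exploiting that Komlós yields a subsequence whose \emph{all} further sub-subsequences have the same a.e.\ Cesàro limit) yield an index sequence $\{n_k\}$ along which both $f'_k := \tfrac{1}{k}\sum_{j\leq k} f_{n_j}$ and $g'_k := \tfrac{1}{k}\sum_{j\leq k} g_{n_j}$ converge a.e. I set $f(x) := \limsup_k f'_k(x)$ and $g(y) := \limsup_k g'_k(y)$, defined pointwise everywhere.

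The essential step is to check $f(x)+g(y) \geq \chi_Z(x,y)$. Fix $m$; since $n_j\to\infty$, for all $j\geq J$ (some $J=J(m)$) one has $Z_{n_j}\supset Z_m$, hence $f_{n_j}(x)+g_{n_j}(y)\geq \chi_{Z_m}(x,y)$. Averaging and discarding the first $J-1$ nonnegative terms yields $f'_k(x)+g'_k(y)\geq \tfrac{k-J+1}{k}\chi_{Z_m}(x,y)$ for $k\geq J$, so $\liminf_k(f'_k+g'_k) \geq \chi_{Z_m}$ pointwise. By the subadditivity $\limsup(a_k+b_k)\leq \limsup a_k + \limsup b_k$ we obtain $f+g\geq \chi_{Z_m}$. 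Letting $m\to\infty$ gives $f+g\geq \chi_Z$ pointwise (resp.\ $\geqmod$), so $(f,g)$ is admissible for $Z$ in the sense of Lemma \ref{thick2}.

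Finally, bounded convergence applied to the a.e.\ convergent averages (which agree a.e.\ with $f$ and $g$) gives $\int f + \int g = \lim_k(\int f'_k + \int g'_k) = \lim_k \tfrac{1}{k}\sum_{j\leq k}\sthi(Z_{n_j}) = \lim_n\sthi(Z_n)$, using that a convergent sequence and its Cesàro averages share the same limit. Lemma \ref{thick2} then yields $\sthi(Z)\leq \int f + \int g = \lim_n\sthi(Z_n)$; the argument for $\thi$ is identical with $\geqmod$ in place of pointwise inequalities. The only delicate point is obtaining a \emph{pointwise} (not merely a.e.) inequality in the proper case, which is why I work with $\limsup$ throughout rather than with the a.e.\ limits supplied by Komlós.
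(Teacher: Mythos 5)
Your proof is correct and follows essentially the same route as the paper: take (near-)optimal pairs $(f_n,g_n)$ for the $Z_n$ supplied by Lemma \ref{thick2}, pass to a limit pair by a compactness argument, and verify that the limit pair still dominates $\chi_Z$, which yields the nontrivial inequality $\leq$ (the opposite one being monotonicity). The only difference is that you use Koml\'os with the $\limsup$ device uniformly for both $\thi$ and $\sthi$, whereas the paper uses weak $L^2$ convergence for $\thi$ and reserves Koml\'os for the proper-thickness half --- a cosmetic distinction, since your unified treatment is precisely what the paper itself sketches for $\sthi$.
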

\begin{proof}

Clearly $\thi(Z)\geq \thi(Z_n)$ for all $n$, hence $\thi(Z)\geq \lim\thi(Z_n)$.
Let's prove an opposite inequality. We start with functions $f_n\colon X \to [0,1]$,  $g_n\colon Y \to [0,1]$ so
that
$f_n(x)+g_n(y)\geqmod \chi_{_{Z_n}}(x,y)$, and
$\int_Xf_n+\int_Yg_n \leq \thi(Z_n)+1/n$.
Any bounded sequence in $L^2$ contains a weakly convergent subsequence, using this twice we may suppose that the sequence
$f_n$ weakly converges to $f$ in $L^2(X, \mu)$, and $g_n$ weakly converges to $g$ in $L^2(Y, \nu)$.
Then $f_n(x)+g_n(y)$ converges to $f(x)+g(y)$ weakly in $L^2(X\times Y, \mu\times \nu)$.
Since weak limit preserves inequalities we have $f\colon X \to [0,1]$ and $g \colon Y \to [0,1]$.
Moreover, for any $n$ we have
$$
f(x)+g(y)\geqmod \chi_{_{Z_n}}(x,y).
$$
Hence 
$$
f(x)+g(y)\geqmod \chi_{_Z}(x,y).
$$
But $\int_Xf+\int_Yg=\lim(\int_Xf_n+\int_Yg_n)\leq \lim \thi(Z_n)$, thus $\thi(Z)\leq \thi(Z_n)$.

For the proper thickness we would replace in the above prove weak convergence in $L^2$
to the almost everywhere convergence obtained from the Koml\'os theorem \cite{JK}.
\end{proof}

Note that upper continuous of thickness does not hold:
all sets $\{(x,y)\colon 0<|x-y|<1/n\}\subset [0,1]^2$ have thickness ~1, but their intersection is empty.

Now we define a convergence of functions ``in thickness'' analogously to convergence
``in measure''. This is a convergence in the following metrizable
topology:

\begin{defe}
Define a distance $\tau(f(\cdot,\cdot),g(\cdot,\cdot))$ between two
arbitrary measurable functions as infimum of such $\eps>0,$ for which
$$
\thi(\{(x,y)\colon |f(x,y)-g(x,y)|>\eps\})\leq \eps.
$$
 \end{defe}

Convergence in this $\tau$-metrics implies convergence in measure (but not vice versa).

\begin{lemma}
The set of measurable functions is complete in the $\tau$-metric.
\end{lemma}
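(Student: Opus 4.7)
The strategy is the standard ``fast subsequence'' argument used for completeness of convergence-in-measure type metrics, with $\thi$ playing the role of $\mu\times\nu$ throughout. Before starting, I would record two preliminary observations. First, the triangle inequality $\tau(f,h)\leq \tau(f,g)+\tau(g,h)$ holds: if the defining infima are (almost) realized at $\eps_1,\eps_2$, then $\{|f-h|>\eps_1+\eps_2\}\subset \{|f-g|>\eps_1\}\cup\{|g-h|>\eps_2\}$, and countable subadditivity of thickness yields the inequality. Second, the infimum defining $\tau(f,g)$ is attained: if $\eps_n\searrow\eps$, the sets $\{|f-g|>\eps_n\}$ form an increasing family exhausting $\{|f-g|>\eps\}$, and the previous lemma on continuity of thickness from below transports the constraint $\thi\leq\eps_n$ to the limit.

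Given a $\tau$-Cauchy sequence $(f_n)$, I would extract a subsequence $(f_{n_k})$ with $\tau(f_{n_k},f_{n_{k+1}})<2^{-k}$. Writing $A_k=\{(x,y):|f_{n_k}(x,y)-f_{n_{k+1}}(x,y)|>2^{-k}\}$, we have $\thi(A_k)\leq 2^{-k}$. Set $B_k=\bigcup_{j\geq k}A_j$, so by countable subadditivity $\thi(B_k)\leq 2^{-(k-1)}$, and let $B=\bigcap_k B_k$. Since thickness is monotone, $\thi(B)\leq\thi(B_k)$ for every $k$, hence $\thi(B)=0$, and in particular $(\mu\times\nu)(B)=0$. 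Off $B_k$, for all $m>k$ one has
\[
|f_{n_k}(x,y)-f_{n_m}(x,y)|\leq \sum_{j=k}^{m-1}2^{-j}<2^{-(k-1)},
\]
so $(f_{n_k})$ is uniformly Cauchy on $(X\times Y)\setminus B_k$. Consequently the subsequence converges pointwise everywhere off $B$, and I define $f$ as this pointwise limit (set to $0$ on $B$); $f$ is measurable.

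Letting $m\to\infty$ in the above pointwise estimate shows $\{|f_{n_k}-f|>2^{-(k-1)}\}\subset B_k$, hence $\thi(\{|f_{n_k}-f|>2^{-(k-1)}\})\leq 2^{-(k-1)}$, which gives $\tau(f_{n_k},f)\leq 2^{-(k-1)}\to 0$. To upgrade subsequential convergence to full convergence, fix $\eps>0$, pick $N$ from the Cauchy property so that $\tau(f_n,f_m)<\eps/2$ for $n,m\geq N$, and then choose $k$ with $n_k\geq N$ and $\tau(f_{n_k},f)<\eps/2$; the triangle inequality yields $\tau(f_n,f)<\eps$ for all $n\geq N$.

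The only step that requires any care is the bookkeeping showing $\thi(B)=0$, i.e.\ that thickness is countably subadditive and monotone on descending intersections in the weak sense $\thi(\bigcap B_k)\leq \inf\thi(B_k)$; both facts are on the bulleted list of elementary properties of $\thi$ recorded earlier. Everything else is a transcription of the standard completeness argument for $L^0$, with ``measure $\leq\eps$'' replaced by ``thickness $\leq\eps$''.
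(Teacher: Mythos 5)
Your argument is correct and is essentially the paper's own proof: pass to a subsequence with $\tau$-distances $<2^{-k}$, take the tail unions $B_k=\cup_{j\geq k}A_j$ of thickness $\leq 2^{1-k}$, get uniform (hence pointwise) convergence off $\cap_k B_k$, and read off $\tau(f_{n_k},f)\leq 2^{1-k}$. The extra remarks you supply (the triangle inequality for $\tau$ via subadditivity of thickness, and the upgrade from the subsequence to the full Cauchy sequence) are steps the paper leaves implicit, and they are handled correctly.
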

\begin{proof}
Consider a sequence  $\{f_n(\cdot,\cdot)\}$ of measurable functions, which is
fundamental in the $\tau$-metric. Passing to a subsequence we may suppose that
$\|f_n-f_{n+1}\|_{\tau}<2^{-n}$.
Put $Z_n=\{(x,y)\colon |f_n(x,y)-f_{n+1}(x,y)|>2^{-n}\}$. Then $th(Z_n)\leq 2^{-n}$, and
for the set $Z_n':=\cup_{k\geq n} Z_k$ we have
$\thi(Z_n')\leq 2^{1-n}$. Thus the set $\cap Z_n'$ has zero thickness,
while outside this set the sequence $(f_n)$ converges pointwise to some function $f_0$.
Moreover, outside $Z'_n$ this sequence converges uniformly and
$$
|f_0-f_n|\leq |f_n-f_{n+1}|+|f_{n+1}-f_{n+2}|+\dots\leq 2^{1-n}.
$$
 It means that $\|f_0-f_n\|_{\tau}\leq 2^{1-n}$, hence $f_n$ converges to $f_0$
in the metric $\tau$.
\end{proof}

Let $\xi_X: X=\sqcup_{i=1}^n X_i$, $\xi_Y: Y=\sqcup_{i=1}^m Y_i$ be finite partitions of the spaces $X,Y$ respectively
onto measurable subsets of positive measure.
Functions which are constant mod $0$ on each product $X_i\times Y_j$ are called \emph{step functions}.
Finite linear combinations $\sum_{i=1}^N a_i(x)b_i(y)$ are called \emph{functions of finite rank}.

The following theorem connects finite rank functions and virtual continuity.

\begin{theorem}\label{tau-virtual}
The $\tau$-closure of the set of step functions \textup(or the set of finite rank functions\textup)
is exactly the set of virtually continuous functions. In other words\textup,
a function $f$ is virtually continuous on $X\times Y$ if and only if for any $\eps>0$ there exist
such families of disjoint measurable sets $A_1,\dots,A_n\subset X,$
$B_1,\dots,B_n \subset Y,$ and numbers $c_{ij},1\leq i,j \leq n,$
that $\sum \mu(A_i)>1-\eps,$ $\sum \nu(B_i)>1-\eps,$ $|F(x,y)-c_{ij}|<\eps$ for almost all $x\in A_i,$
$y\in B_j$.
\end{theorem}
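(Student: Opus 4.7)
The plan is to prove two inclusions: (A) every virtually continuous $f$ lies in the $\tau$-closure of step functions, and (B) every $\tau$-limit of step functions is virtually continuous. The ``finite rank'' variant of the theorem then follows: step functions are a special case of finite rank functions since $\chi_{A_i\times B_j}(x,y)=\chi_{A_i}(x)\chi_{B_j}(y)$, while finite rank functions were already observed to be virtually continuous via Luzin's theorem applied to each factor, so the two $\tau$-closures agree and both coincide with the class of virtually continuous functions.

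For direction (A), fix $\eps>0$. By Theorem \ref{polnayamera} there exist sets $X_0\subset X$, $Y_0\subset Y$ of full measure and admissible semimetrics $\rho_X,\rho_Y$ such that $f$ is continuous on $(X_0\times Y_0,\rho_X\times\rho_Y)$. The inner regularity statement from the opening regularity proposition (after passing, if needed, to the separated metric quotient) yields $\rho_X$-compact $K_X\subset X_0$ and $\rho_Y$-compact $K_Y\subset Y_0$ with $\mu(K_X),\nu(K_Y)>1-\eps$. On the compact product $K_X\times K_Y$ the function $f$ is uniformly continuous, so I partition $K_X=\bigsqcup_i A_i$ and $K_Y=\bigsqcup_j B_j$ into finitely many measurable pieces of $\rho_X,\rho_Y$-diameter small enough that the oscillation of $f$ on each $A_i\times B_j$ is below $\eps$. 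Taking $c_{ij}:=f(x_i,y_j)$ for arbitrary representatives, the step function $g=\sum c_{ij}\chi_{A_i\times B_j}$ differs from $f$ by less than $\eps$ on $\bigcup A_i\times B_j$, whose complement lies in $(X\setminus K_X)\times Y\cup X\times(Y\setminus K_Y)$, of thickness at most $2\eps$. Hence $\tau(f,g)\leq 2\eps$, which is precisely the approximation recorded in the second formulation of the theorem.

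For direction (B), let $(f_n)$ be step functions with $f_n\to f$ in $\tau$, built over finite measurable partitions $\xi^n_X,\xi^n_Y$ of $X,Y$. Given $\eps>0$, pass to a subsequence with $\tau(f_n,f)<4^{-n}$ and pick witnesses $\tilde X_n\subset X,\tilde Y_n\subset Y$ with $\mu(\tilde X_n)+\nu(\tilde Y_n)\leq 2\cdot 4^{-n}$ and $|f-f_n|\leq 4^{-n}$ outside $(\tilde X_n\times Y)\cup(X\times\tilde Y_n)$ mod~$0$. Choose $N$ so that $\sum_{n\geq N}2\cdot 4^{-n}<\eps$, and set $X^*=X\setminus\bigcup_{n\geq N}\tilde X_n$, $Y^*=Y\setminus\bigcup_{n\geq N}\tilde Y_n$; then $\mu(X^*),\nu(Y^*)>1-\eps$ and, after discarding a single null set from $X^*\times Y^*$, the convergence $f_n\to f$ is uniform on $X^*\times Y^*$. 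On $X^*$ equip an admissible semimetric $\rho_X=\sum_n 2^{-n}\rho_{X,n}$, where $\rho_{X,n}(x,x')$ is $0$ or $1$ according to whether $x,x'$ lie in a common cell of $\xi^n_X$; symmetrically on $Y^*$. With respect to these semimetrics the cells of each $\xi^n_X,\xi^n_Y$ are relatively clopen, so every $f_n$ is continuous on $(X^*\times Y^*,\rho_X\times\rho_Y)$, and the uniform limit $f$ is continuous there as well. Thus $f$ is properly virtually continuous on $X^*\times Y^*$, and since $\eps$ was arbitrary, $f$ is virtually continuous.

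The main obstacle lies in direction (B): one must manage mod~$0$ discrepancies coherently across countably many witnesses $\tilde X_n,\tilde Y_n$ and simultaneously verify that the weighted series of cut-type semimetrics is indeed an admissible semimetric. The former is routine once a single subsequence has been fixed and the exceptional null set absorbed into $X^*\times Y^*$; the latter follows because each $\rho_{X,n}$ takes only values in $\{0,1\}$, so the series converges uniformly and remains jointly measurable, and the induced metric quotient is separable since its points correspond to measurable equivalence classes on a probability space. Direction (A) is essentially a Luzin-style compactification argument; its only technical care is choosing the partitions of $K_X,K_Y$ into measurable pieces of uniformly small diameter, which is a standard disjointification of a finite cover by small balls.
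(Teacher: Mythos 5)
Your proof follows essentially the same two-step scheme as the paper's: for one direction, pass to compact sets and use uniform continuity to produce a step function close in $\tau$; for the other, take a rapidly convergent subsequence, collect the thickness witnesses $\tilde X_n,\tilde Y_n$ into large sets $X^*,Y^*$, and make the step functions continuous via weighted cut semimetrics built from their partitions. Direction (A) and your reduction of the finite-rank variant to the step-function variant are fine.

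The one step that needs repair is in direction (B), where you assert that ``after discarding a single null set from $X^*\times Y^*$, the convergence $f_n\to f$ is uniform on $X^*\times Y^*$.'' Removing a null subset of the \emph{product} does not leave a product of sets, so continuity of $f$ on what remains does not establish virtual continuity, which requires continuity on some $X'\times Y'$. What you actually need is that $f$ is \emph{equivalent} to a continuous function on all of $X^*\times Y^*$, i.e. that the genuinely-constant-on-cells representatives of the $f_n$ are uniformly Cauchy at \emph{every} point of $X^*\times Y^*$. This does follow, but only by invoking that the cells of $\xi^n_X,\xi^n_Y$ have positive measure: the a.e.\ bound $|f_n-f_m|\leq 4^{-n}+4^{-m}$ on $(A\cap X^*)\times(B\cap Y^*)$ forces the same bound for the constant values whenever these intersections have positive measure, and the countably many null intersections must be deleted from $X^*$ and $Y^*$ \emph{separately}, not from the product. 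The paper closes the same gap by a different device: it shrinks $\tilde X_n,\tilde Y_n$ to compact supports of the restricted measures, on which the $C$- and $L_\infty$-distances between continuous functions coincide, so the $L_\infty$-Cauchy sequence of step functions converges in $C(\tilde X_n\times\tilde Y_n)$. Either fix works; without one of them the passage from ``uniform convergence off a null set of the product'' to ``continuous on a product of large sets'' is not justified.
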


\begin{proof}
At first, we show that a $\tau$-limit $f(x,y)$ of step functions  $f_n(x,y)$ is virtually
continuous.
We choose admissible metrics $\rho_X$ and $\rho_Y$ on $X$, $Y$ respectively so that step functions $f_n$
are continuous.

Passing to a subsequence we may suppose that $\tau(f_n,f)<\frac{1}{2^n}$. It means that
$$
\thi(\{(x,y)\colon |f(x,y)-f_n(x,y)|>\frac{1}{2^n}\})<\frac{1}{2^n}.
$$
Choose subsets $X_n\subset X$,  $Y_n \subset Y$ so that
$$
\Big\{(x,y)\colon |f(x,y)-f_n(x,y)|>\frac{1}{2^n} \Big\} \subsetmod (X\times Y_n) \cup (X_n\times Y),
$$
and  $\mu(X_n)+\nu(Y_n)<\frac{1}{2^n}$. Define sets $\tilde X_n=X\setminus \cup_{k>n}X_k$
and $\tilde Y_n=Y\setminus \cup_{k>n}Y_k$. Clearly $\mu(\tilde X_n)>1-\frac{1}{2^n}$ and $\nu(\tilde Y_n)>1-\frac{1}{2^n}$.
We may replace the set $\tilde X_n$ to its subset (call it $\tilde X_n$ again) which is compact in metric $\rho_X$,
coincides with the supporter of the measure $\mu$ restricted to $\tilde X_n$ and has large measure
$\mu(\tilde X_n)>1-\frac{1}{2^n}$. Do the same with $\tilde Y_n$.
Now $L_\infty$ and $C$ define the same distance between continuous functions
on $\tilde X_n \times \tilde Y_n$, hence $(f_k)$ is a sequence of functions
converging in $C(\tilde X_n\times \tilde Y_n)$.
Hence the function $f$ is equivalent to a continuous function on $\tilde X_n \times \tilde Y_n$.
Taking into account that $n$ is arbitrary we conclude that $f$ is virtually continuous.

Now we prove the converse. Let $f$ be virtually continuous. We need to approximate $f$ by step functions
in $\tau$-metric. Choose any $\eps>0$ and find sets $\tilde X \subset X$, $\tilde Y \subset Y$ and admissible metrics
$\rho_X$, $\rho_Y$ such that $f$ is equivalent to a function $\tilde f$, which is continuous on $\tilde X \times \tilde Y$, and $\mu(\tilde X)>1-\eps$,
$\nu(\tilde Y)>1-\eps$. Passing to subsets we may also suppose that $\tilde X$ and $\tilde Y$ are compact in respective
metrics. using uniform continuity of $\tilde f$ on a compact metric space we partition $\tilde X$ and $\tilde Y$
onto small enough parts so that $\tilde f$ is constant up to $\eps$ on products
of partition elements. This provides a step function which is $\eps$-close to $f$ in $\tau$-metric.
\end{proof}

Theorem \ref{tau-virtual} also shows a purely measure-theoretical character of virtual continuity
and possibility to generalize it for other pairs of sigma-subalgebras. Close things are discussed in \cite{S}.

We apply theorem \ref{tau-virtual} for proving Proposition \ref{gruppa}.

\begin{proof}[Proof of Proposition \ref{gruppa}]
If a function $f$ is equivalent to a continuous function, then the function $F$ on $G\times G$
is equivalent to a continuous (in metric of $G$, which is admissible w.r.t. Haar measure $|\cdot|$). Hence $F$ is
virtually continuous.

Let's prove the opposite. Fix $\eps>0$. Choose any point $g_0\in G$
and establish that in its small enough neighborhood essential variance of $f$
does not exceed $\eps$. Since $g_0$ and $\eps>0$ are arbitrary we see that $f$
coincides almost everywhere with its essential upper limit, which itself
is continuous function.

Using virtual continuity of $f$ we find such families of disjoint measurable subsets $A_1,\dots,A_n$,
$B_1,\dots,B_n$, and numbers $c_{ij},1\leq i,j \leq n$,
that $\sum |A_i|>1/2$, $\sum |B_i|>1/2$, $|F(x,y)-c_{ij}|<\eps/3$ for almost all $x\in A_i$,
$y\in B_j$. By pigeonhole principle we find indices  $i,j$ such that $|C|>0$, where $C=A_i\cap g_0B_j$.

Continuity of shift in mean implies that for some neighborhood $\Delta$ of unity we have
$|C\cap sC|>|C|/2$ for all $s\in \Delta$. It suffices to prove that in the neighborhood $\Delta g_0$
of a point $g_0$ inequality $|f(z)-c_{ij}|<\eps/3$ holds for almost all points $z$.
Let $Sg_0$, where $S\subset \Delta$, be a set of $z$, for which it is not so.
Then for almost all $x\in C\subset A_i$, $y\in g_0^{-1}C\subset B_j$
we have $xy^{-1}\notin Sg_0$. In other words, the following integral vanishes:
\begin{align*}
0=\int \chi_{_C}(x) \chi_{_{g_0^{-1}C}} (y) \chi_{_{Sg_0}}(xy^{-1}) dx dy=\int \chi_{_C}(x) \chi_{_{tC}}(x) \chi_{_S}(t) dx dt=
\\
=\int_S |C\cap tC| dt\geq |S|\cdot |C|/2,
\end{align*}
hence $|S|=0$, as desired (second equality corresponds to the change
of variables$(x,y)\mapsto (x,t),t=xy^{-1}g_0^{-1}$).
\end{proof}

Measurable functions $f(\cdot,\cdot)$, as we have seen, are classified by matrix distributions,
i. e. by measures on the space of infinite matrices $(a_{ij})_{i,j=1}^\infty$, induced by the map
$f\mapsto (a_{ij}=f(x_i,y_j))$, where points $x_i$ in $X$ and $y_i$ in $Y$, $i=1,2,\dots$,
are chosen independently. Virtual continuity also may be characterized on this manner:

\begin{theorem}
Let $x_1, x_2, \dots$ (resp. $y_1, y_2, \dots$) be independent random points in $X$ (resp. in $Y$).
Virtual continuity of the measurable function $f(x,y)$ is equivalent to each of two following conditions:
\begin{itemize}
\item[\textup{(i)}]
For any $\varepsilon >0$
there exists a positive integer $N$ such that the probability of the following event tends 1
when $n$ grows:
if points $x_1,\dots,x_n$ are chosen independently at random in $X$\textup, $y_1,\dots,y_n$ are chosen independently at random
 in $Y$\textup, then there exist partitions
$\{1,\dots,n\}=\sqcup_{i=0}^N A_i=\sqcup_{i=0}^N B_i$\textup such
 that $$|A_0|<\varepsilon  n, \quad |B_0|<\varepsilon n, \quad |f(x_s,y_t)-f(x_r,y_p)|<\varepsilon $$
 for $n\geq i,j>0$\textup, $s,r\in A_i, p,t\in B_j$.

\item[\textup{(ii)}] For any $\varepsilon >0$ there exists a positive integer
 $N$\textup, for which the probability of the following event equals 1\textup:

 if points $x_1, x_2, \dots$ are chosen
independently at random in $X$\textup, $y_1,y_2,\dots$ are chosen independently at random
 in $Y$\textup, then there exist two partitions of the naturals
 $\{1, 2, \dots,\}=\sqcup_{i=0}^N A_i=\sqcup_{i=0}^N B_i$\textup,
 so that upper density of the set $A_0\cup B_0$ is less than $\eps$ \textup(i.e. $\limsup |(A_0\cup B_0)\cap [1,n]|/n<\eps$\textup) and
 $|f(x_s,y_t)-f(x_r,y_p)|<\varepsilon $
 for $i,j>0$\textup, $s,r\in A_i, p,t\in B_j$.
\end{itemize}
\end{theorem}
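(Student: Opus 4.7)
The plan is to establish the chain (virtual continuity) $\Rightarrow$ (ii) $\Rightarrow$ (i) $\Rightarrow$ (virtual continuity), using Theorem \ref{tau-virtual} ($\tau$-approximation by step functions) to convert virtual continuity into sample-level block structure and back.

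For (virtual continuity) $\Rightarrow$ (ii), I would apply Theorem \ref{tau-virtual} to obtain, for each $k$, a step function $g_k$ on partitions $X=\sqcup_i X_i^{(k)}$, $Y=\sqcup_j Y_j^{(k)}$ (at most $N_k$ parts on each side) with $\tau(f,g_k)<4^{-k}$, so that $|f-g_k|\leq 4^{-k}$ outside $(\tilde X_k\times Y)\cup(X\times\tilde Y_k)$ for some $\tilde X_k,\tilde Y_k$ with $\mu(\tilde X_k)+\nu(\tilde Y_k)<4^{-k}$. Given $\varepsilon>0$, fix $k$ with $2\cdot 4^{-k}<\varepsilon$ and take $N=N_k$. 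Define $A_0=\{m:x_m\in\tilde X_k\}$, $A_i=\{m:x_m\in X_i^{(k)}\setminus\tilde X_k\}$ and analogously $B_0,B_j$. The Strong Law of Large Numbers yields almost surely $\limsup_n|(A_0\cup B_0)\cap[1,n]|/n=\mu(\tilde X_k)+\nu(\tilde Y_k)<\varepsilon$; and for $s,r\in A_i$, $p,t\in B_j$ with $i,j>0$, both $(x_s,y_t)$ and $(x_r,y_p)$ lie in $X_i^{(k)}\times Y_j^{(k)}$ outside the exceptional strip, so each $f$-value is within $4^{-k}$ of the common step value $c_{ij}^{(k)}$, giving $|f(x_s,y_t)-f(x_r,y_p)|\leq 2\cdot 4^{-k}<\varepsilon$. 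This proves (ii); truncating the almost-sure infinite partition to $[1,n]$ (after applying (ii) with $\varepsilon/2$) immediately yields (i).

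For (i) $\Rightarrow$ (virtual continuity), I would verify criterion (iii) of Theorem \ref{compactness}: for every $\varepsilon>0$ exhibit $X'\subset X$, $Y'\subset Y$ of measure $>1-\varepsilon$ on which $\{f_x|_{Y'}:x\in X'\}$ is totally bounded in $L_\infty(Y')$. With $N$ coming from (i), embed a fixed point $x_0$ into the i.i.d.\ sample: by exchangeability, the good $(N+1)$-class partition exists with probability tending to $1$, and $x_0$ receives a label $\alpha(x_0)\in\{0,\dots,N\}$. For $y$ outside $B_0$, the value $f(x_0,y)$ is determined within $\varepsilon$ by the pair $(\alpha(x_0),\beta(y))$ and the random block matrix $(c_{ij})$; since only finitely many signatures can occur, the slice family is $\varepsilon$-totally bounded modulo an $\varepsilon$-small exceptional set on the $Y$-side, as required.

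\textbf{Main obstacle.} The hardest step is promoting the sample-dependent labels $\alpha,\beta$ and block values $c_{ij}$ to a deterministic, measurable partition structure on $X$ and $Y$. This rests on an exchangeability argument: since the samples are i.i.d., the Hewitt--Savage zero--one law (or, more structurally, an Aldous--Hoover representation of the exchangeable array $(f(x_i,y_j))$) forces the ``typical class'' of a fixed $x$ in a large random sample to concentrate on a single value depending measurably on $x$ alone. Once this stabilization is in hand, one extracts deterministic partitions $X=\sqcup_{i=0}^{N}X_i$, $Y=\sqcup_{j=0}^{N}Y_j$ with $\mu(X_0)+\nu(Y_0)=O(\varepsilon)$ and a matrix $(c_{ij})$, defining a step function $g$ with $\tau(f,g)=O(\varepsilon)$; Theorem \ref{tau-virtual} then completes the proof.
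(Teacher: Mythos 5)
Your treatment of the easy directions (virtual continuity $\Rightarrow$ (ii) $\Rightarrow$ (i)) matches the paper: approximate $f$ by step functions in the $\tau$-metric via Theorem \ref{tau-virtual} and invoke the law of large numbers, then convert the density statement of (ii) into the counting statement of (i). The gap is in (i) $\Rightarrow$ virtual continuity, precisely at the step you yourself flag as the main obstacle. The labels $\alpha(x_0)$, $\beta(y)$ and the block matrix $(c_{ij})$ are not functions of the sample in any canonical way: the partition whose existence (i) asserts is highly non-unique, and its classes carry no intrinsic ordering, so ``the class that $x_0$ receives'' is not a well-defined random variable, let alone one that stabilizes as $n\to\infty$. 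The Hewitt--Savage law only says that exchangeable \emph{events} have probability $0$ or $1$; it does not produce a measurable selection of a partition. The Aldous--Hoover representation of the exchangeable array $(f(x_i,y_j))$ essentially returns a version of $f$ itself, which is circular. So the assertion that exchangeability forces the typical class of a fixed $x$ to concentrate on a single label depending measurably on $x$ alone is exactly the content that needs proving, and as stated it is not clear it is even true before one rigidifies the choice of partition.

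The paper circumvents this by never tracking individual points. It forms the empirical measures $\mu_j(n)=n^{-1}\sum_{i\in A_j^n}\delta(x_i)$ of the classes, passes to weakly convergent subsequences to obtain limits $\mu_j(\infty)$, which are absolutely continuous with respect to $\mu$ with densities $\varphi_j$, and defines the deterministic partition by putting $X_j$ inside $\{\varphi_j\geq 1/(2N)\}$ (with $\mu(X_0)<2\eps$), and similarly on $Y$. To show that $f$ has essential oscillation at most $2\eps$ on each $X_j\times Y_k$ one then needs a quantitative link between measures of sets and counts of sample pairs falling in them; the paper supplies this with a second-moment ($U$-statistic) concentration lemma plus Borel--Cantelli along a sparse subsequence $\mathcal N$, applied to the countable family of sets $R\setminus f^{-1}(\Delta)$ with $R$ a rational rectangle and $\Delta$ a rational interval. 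If the oscillation were large, two rational rectangles mostly filled by $f^{-1}(\Delta_1)$ and $f^{-1}(\Delta_2)$ would each receive on the order of $|R_i|n^2/N^2$ sample pairs from a single block $A_j^n\times B_k^n$, contradicting (i) together with the upper count bound for $R_i\setminus f^{-1}(\Delta_i)$. Some such quantitative device is unavoidable here, and your proposal does not contain one; the final step (a deterministic step-function approximation feeding into Theorem \ref{tau-virtual}) is fine once that machinery is in place.
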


\begin{proof}
Virtually continuous function may be approximated in $\tau$-metric by step functions,
hence satisfies (i), (ii) by Law of Large Numbers.

Deduce (i) from (ii).
A set of upper density less than $\eps$ contains less than $2\eps n$ elements from 1 to $n$
for all large enough $n$. It means that with probability 1 for all large enough $n$ there exist
partitions of the set $\{1,\dots,n\}$, which satisfy (i). It certainly implies
that a probability of a specific event tends to 1 as a function of $n$.

It remains to deduce virtual continuity from
(i). We may and do suppose that $X$, $Y$ are both unit segments $[0,1]$ with Lebesgue
measure.

We need the following standard lemma on large deviations for
$U$-statistic:

\begin{lemma}
Let $Z\subset X\times Y$ be a measurable subset of $X\times Y$.
Consider the following event: ``number of points $(x_i,y_j)$\textup, $1\leq i,j\leq n$ in $Z$
differs from $n^2|Z|$ on more than $n^{9/5}$.'' It's probability does not exceed $2n^{-3/5}$.
\end{lemma}

\begin{proof}
Define a function $g(x,y)=\chi_{_Z}(x,y)-|Z|$ (with zero average)
and random variables $\xi_{i,j}=g(x_i,y_j)$. We have $n^2$ centered random variables taking values in $[-1,1]$.
Most of them are independent, this allows to estimate the variance
\begin{multline*}
\mathbb{E}\left(\sum g(x_i,y_j)\right)^2=\mathbb{E}\Big(\sum_{i,j} g(x_i,y_j)^2 +\sum_{i,j,k\ne j} g(x_i,y_j)g(x_i,y_k)+\\
+\sum_{i,j,k\ne i}g(x_i,y_j)g(x_k,y_j)
\Big)\leq 2n^3.
\end{multline*}
Now required estimate follows from the Chebyshev inequality.
\end{proof}

Choose a subset ${\cal N}\subset \mathbb{N}$ so that $\sum\limits_{n\in {\cal N}}n^{-3/5}<+\infty$.
Then by Borel--Cantelli lemma for any measurable $Z\subset [0,1]^2$
for almost all pairs of sequences  $(\{x_i\},\{y_i\})$ for large enough  $n \in {\cal N}$ we have
\begin{equation}\label{inZ}
\frac{n^2}{2}|Z|\leq \#\{(i,j)\colon 1\leq i,j\leq n, (x_i,y_j)\in Z\}\leq 2n^2|Z|.
\end{equation}

Apply this for a countable family of sets of type $Z=R\setminus f^{-1}(\Delta)$, where $R$ runs over rectangles with rational
coordinates of vertices, and $\Delta$ runs over rational intervals on a line. We get
for almost any random pair of sequences $(\{x_i\},\{y_i\})$ inequality
\eqref{inZ} for any $Z$ for large enough (how large depends on $Z$) $n$. In further we consider only pairs
of sequences satisfying this property.

Fix $\eps>0$ and find $N$ from (i). Consider a random pair of sequences $x_1,x_2,\dots\in X$ and $y_1,y_2,\dots \in Y$.
With probability 1 this sequence satisfies (i) for each $n$.
For fixed $n \in {\cal N}$ consider empiric distributions $\mu_j(n)=n^{-1}\sum_{i\in A^n_j} \delta(x_i)$ on $X$
and analogous empiric distributions on $Y$.
Passing to a subsequence we suppose that a sequence of measures $\{\mu_j(n)\}_{n \in {\cal N}}$
weakly converges when  $n \to +\infty$ to some measures $\mu_j(\infty)$. We may also suppose that corresponding measures on
$Y$ converge. Note that $\sum_{j=1}^N \mu_j(\infty)=\mu$ with probability 1, hence all measures $\mu_j(\infty)$
are absolutely continuous w.r.t $\mu$ with probability 1, they have Radon--Nikodym densities
$\varphi_j$. We have
$\int \varphi_0\leq \eps$, thus the measure of the set of $x\in X$ satisfying
$\varphi_0(x)\geq 1/2$ does not exceed $2\eps$. For any other
$x$ we have $\sum_{j=1}^N \varphi_j(x)\geq 1/2$,
hence $\varphi_j(x)\geq 1/2N$ for some $j$. Thus we may partition
$X$ onto sets $X_0,X_1,\dots,X_N$ so that
$\mu (X_0)<2\eps$, $\varphi_j\geq 1/2N$
on $X_j$. Construct analogous partition on~$Y$.
Let's prove that with probability 1 function $f$
has essential variation (essential supremum minus
essential infimum) at most
$2\eps$ on $X_j\times Y_k$ for $j,k\geq 1$. Indeed, if essential variation exceeds
$2\eps$, then there exists rational intervals
$\Delta_1,\Delta_2$ at distance at least
$3\eps/2$ such that the sets $f^{-1}(\Delta_i)$, $i=1,2$ intersected with $X_j\times Y_k$
have positive measure. Fix small $\delta>0$ (to be chosen later dependently on $\eps$ and $N$).
There exist rational rectangles $R_i$, $i=1,2$ so that
$$
|R_i\cap (X_j\times Y_k)\cap f^{-1}(\Delta_i)|>(1-\delta)|R_i|.
$$
Weak convergence of measures, inequalities $\varphi_j \geq 1/2N$ on $X_j$ and analogous inequalities on $Y_k$
imply that when $n \to \infty$, $n \in {\cal N}$, we have
\begin{multline*}
\mu_j(n)\times \nu_j(n)(R_i)\to \mu_j(\infty)\times\nu_j(\infty)(R_i)\geq \\
\geq \mu_j(\infty)\times\nu_j(\infty)(R_i\cap (X_j\times Y_k))\geq \frac{1}{4N^2}|R_i \cap (X_j\times Y_k)|,
\end{multline*}
hence for large enough $n \in {\cal N}$ the number of points $(x_s,y_t)\in R_i$ with $s \in A^n_j$, $t \in B^n_k$ is not less than
$$
\frac{1-\delta}{10 N^2}|R_i| n^2.
$$
Property (i) guarantees that either all those points
(for $i=1$ and $i=2$ together) do not lie in $f^{-1}(\Delta_1)$, or all of them do not lie in $f^{-1}(\Delta_2)$.
But for each of the sets
$Z=R_i\setminus f^{-1}(\Delta_i)$ of measure at most $\delta |R_i|$ for large enough  $n\in {\cal N}$ the number of points $(x_s,y_t)$, $1\leq s,t\leq n$,
in $Z$ does not exceed $2 \delta |R_i| n^2$. This contradicts to above lower estimate when $\delta$ is small enough.

Thus with probability 1 function $f$ may be approximated by a step function (respecting constructed partitions)
in $\tau-$metric. It suffices to use that $\eps$ is arbitrary and apply Theorem \ref{tau-virtual}.
\end{proof}

\subsection{Bistochastic measures and polymorphisms}\label{bsm}

From the measure-theoretical point of view a function of $k$ variables
on the product of standard continuous spaces is nothing but the function on
the standard continuous space (due to isomorphism of all such spaces).
In order to deal with it as a function of $k$ variables, we have to introduce another
category, then just measurable spaces.

Namely, consider the following structure: the measure space $({\cal X},{\mathfrak A}, m)$,
with $k$ selected sigma-subalgebras ${\mathfrak A_1},\dots,{\mathfrak A_k}$ in $\mathfrak A$.
It is natural to suppose that those subalgebras generate the whole sigma-algebra
$\mathfrak A$.

The connection with general viewpoint is the following: in the space
${\cal X}=\prod_{i=1}^k (X_i,{\frak A_i},\mu_i)$, $m=\prod \mu_i$,
identify algebras ${\frak A_i}$ with subalgebras of
${\frak A}=\prod {\frak A}_i$ by multiplying to trivial
subalgebras on other multiples. In other words, function $f(x_1,\dots,x_k)$ on ${\cal X}$
is ${\frak A}_i$-measurable iff $f$ depends only on $i$-th variable $x_i$ ($i=1, \dots, k$).
Functions depending on any less numbers of variables are defined similarly.

 \begin{defe}
 A measurable function on ${\cal X}$ with $k$ selected
subalgebras is called a \it{general measurable function
of $k$ variables.}
\end{defe}

In the classical case those subalgebras are independent and variables are called
independent their-selves  \footnote{though this is
just a lucky coincidence of probabilistic and analytical meanings of independence
}, but many fact on measurable functions remain true in general case aswell.

  Consider a measure $\lambda$ on sigma-algebra $\mathfrak A$.
It may be restricted onto sigma-subalgebras ${\mathfrak A}_i$, $i=1,\dots,k$.
Consider such measures $\lambda$ that those restrictions
are absolutely continuous with respect to restrictions of $m$ onto ${\mathfrak A}_i$.
 If restrictions of  $\lambda$ onto ${\frak A}_i$ coincide with $m$,
	$i=1 \dots k$, such a measure $\lambda$ is called  {\it multistochastic} with respect to given subalgebras
	(bistochastic for  $k=2$); if restrictions are just equivalent to $m$ for $i=1, \dots,k$,
	we call $\lambda$  {\it almost multistochastic}. Finally, if
  $\lambda(U)\leq m(U)$ for any $U\in {\frak A}_i,i=1,\dots,k$, we call $\lambda$
	{\it submultistochastic}.
Of course, bistochastic measure on
$X\times Y$ may be singular with respect to the product measure. For instance, in the case of direct product
of segments $(X,\mu)=(Y,\nu)=[0,1]$ there is a bistochastic measure $\lambda$ on diagonal $\{x=y\}$ (with density $d\mu(x)$).

 Furthermore we suppose for simplicity that $k=2$, i.e. consider functions of two variables.
But there is no serious difference for $k>2$. We consider not only independent variables, most of the notions
may be defined for general pair of sigma-algebras. But even the case of independent variables is often useful
to treat as a general case.

Bistochastic measure on the direct product of spaces define the so called
 \emph{polymorphism} of the space $(X,\mu)$ into  $(Y,\nu)$ (see \cite{V5}),
i.e. ``multivalued mapping'' with invariant measure.
  The case of identified variables $(X,{\frak A},\mu) = (Y,{\frak B},\nu)$ is of special interest: polymorphism
	in this case 	generalizes the concept of automorphism of measure space.
	Almost bistochastic measures define a  {\it polymorphism with quasi invariant measure}.
	Bistochastic or almost bistochastic measure $\lambda$ defines also a bilinear
  (in general case $k$-linear) form $(f(x),g(y))\rightarrow \int f(x)g(y)d\lambda(x,y)$,
	corresponding to the so called Markovian, resp. quasi Markovian operator in corresponding functional
	spaces. Note that this operators $U_{\lambda}$ is a contraction, i.e. has norm at most 1, which preserves the cone of non-negative
	functions. In the case of bistochastic measure this operator (as well as adjoint operator) preserve constants: $U_{\lambda}1=1$.
	
  See \cite{V5,V6,V7} about many connections of polymorphisms  (Markovian operators, joinings, couplings, correspondences, Young measures, bibundles etc).
	Bistochastic measures play a key role in the intensively developing theory of continuous graphs \cite{L}.

Note that for quasi-bistochastic measure$\lambda$ on $X\times Y$ all sets of zero
proper thickness are measurable and have measure 0. Hence all
virtually open sets are $\lambda$-measurable, and therefore
properly virtually continuous functions are
measurable. Equivalent (w.r.t. measure $\mu\times \nu$) properly
virtually continuous functions are also w.r.t $\lambda$ due to Proposition
 \ref{proper-coinciding}. Thus
\emph{for any equivalence class of measurable virtually continuous functions
there is the uniquely well-defined class of $\lambda$-equivalent
$\lambda$-measurable functions.}
Further we formulate this observation as an embedding theorem
of normed spaces.

\subsection{Norm on virtually continuous functions}

Convergence in $\tau$-metric defined above generalizes
convergence in measure for virtually continuous functions. There
are analogues  of known Banach spaces of measurable functions.

A measurable function $h(\cdot,\cdot)$ on the space $(X\times Y,\mu\times \nu)$ is called subbistochastic, if the measure with $\mu\times \nu$-density
 $|h(\cdot,\cdot)|$ is subbistochastic. Denote by  $\mathcal{S}$ the set of subbistochastic functions.

Call a function $f(x,y)=a(x)+b(y)$ {\it separate}.
The following construction
defines a norm (so called {\it regulator norm}) of a function
of two variables, where regulator is separate function and norm
is taken in $L^1$.
Define a finite or infinite norm of a measurable function $f(\cdot,\cdot)$ as

\begin{align*}
\thn{f}:=\inf\Big\{\int_X a(x) d\mu(x)+\int_Y b(y) d\nu(y)\colon &\\
a(x)\geq 0, b(y) \geq 0, |f(x,y)|&\leqmod a(x)+b(y)\Big\}.
\end{align*}

Connection between $\rm{SR}^1$-norm and $\tau$-metric is established in the following
\begin{lemma}
For any function $f$ we have
$\tau(0,f)\leq \sqrt{2\thn{f}}.$
\end{lemma}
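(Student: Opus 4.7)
The plan is to exploit the definition of $\thn{f}$ directly: take a near-optimal decomposition $|f(x,y)|\leqmod a(x)+b(y)$ and cut each of $a$ and $b$ at a threshold $\eps/2$, so that $\{|f|>\eps\}$ gets covered by two axis-parallel slabs whose widths are controlled by Chebyshev's inequality.

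More precisely, fix $\delta>0$ and choose non-negative $a\in L^1(X,\mu)$, $b\in L^1(Y,\nu)$ with $|f(x,y)|\leqmod a(x)+b(y)$ and $\int_X a\,d\mu+\int_Y b\,d\nu<\thn{f}+\delta$. Set $\eps:=\sqrt{2(\thn{f}+\delta)}$ and define
\[
\tilde X=\{x\in X\colon a(x)>\eps/2\},\qquad \tilde Y=\{y\in Y\colon b(y)>\eps/2\}.
\]
If $|f(x,y)|>\eps$ and $|f(x,y)|\leq a(x)+b(y)$, then at least one of $a(x)>\eps/2$ or $b(y)>\eps/2$ holds, so
\[
\{(x,y)\colon |f(x,y)|>\eps\}\subsetmod (\tilde X\times Y)\cup(X\times \tilde Y).
\]
By Chebyshev's inequality $\mu(\tilde X)\leq \tfrac{2}{\eps}\int_X a\,d\mu$ and $\nu(\tilde Y)\leq \tfrac{2}{\eps}\int_Y b\,d\nu$, hence by the definition of thickness,
\[
\thi(\{|f|>\eps\})\leq \mu(\tilde X)+\nu(\tilde Y)\leq \tfrac{2}{\eps}\bigl(\thn{f}+\delta\bigr)=\eps.
\]
By the definition of $\tau$, this gives $\tau(0,f)\leq \eps=\sqrt{2(\thn{f}+\delta)}$. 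Letting $\delta\to 0$ yields $\tau(0,f)\leq \sqrt{2\thn{f}}$.

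There is no genuine obstacle: the whole point is the one-line Chebyshev cut and the optimal choice $\eps=\sqrt{2\thn{f}}$ that balances the two sides. The only thing to be mildly careful about is that the decomposition realizing $\thn{f}$ may not be attained, so we work with an $\delta$-approximate minimizer and pass to the limit; and that the inclusion holds only $\mu\times\nu$-almost everywhere, which is exactly what $\thi$ (as opposed to $\sthi$) is designed to accommodate.
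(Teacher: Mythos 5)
Your proof is correct and follows essentially the same route as the paper: take a near-optimal separate majorant $a(x)+b(y)$, cut at level $\eps/2$, apply Chebyshev to each factor, and observe that $\eps=\sqrt{2\thn{f}}$ balances the bound. The only cosmetic difference is that the paper phrases it via a strict inequality $\thn{f}<t^2/2$ rather than a $\delta$-approximate minimizer with a limit, which amounts to the same thing.
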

\begin{proof}
If $\thn{f}=\infty$, the claim is clear. Assume that $\thn{f}<t^2/2$ for
some $t>0$. Then there exist non-negative functions
$a\colon X \to \mathbb{R}$ and $b\colon Y \to \mathbb{R}$ such that $|f(x,y)|\leqmod a(x)+b(y)$, and $\int_Xa+\int_Yb <t^2/2$. Then Chebyshev inequality
implies
$$
\mu\{x\colon a(x)\geq t/2\}+\nu\{y\colon b(y)\geq t/2\}<t.
$$
But
$$
\{(x,y)\colon f(x,y)\geq t\} \subsetmod \{x\colon a(x)\geq t/2\}\times Y \cup X\times\{y\colon b(y)\geq t/2\}, 
$$
hence $\tau(0,f)<t$, as desired.\end{proof}
\begin{corollary}\label{normtau}
Convergence in $\rm{SR}^1$-norm implies
convergence in $\tau$-metric.
\end{corollary}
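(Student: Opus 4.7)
The plan is to observe that the corollary is an immediate consequence of the preceding lemma together with the translation-invariance of the $\tau$-metric. Specifically, from the definition
$$
\tau(f,g)=\inf\{\eps>0\colon \thi(\{(x,y):|f(x,y)-g(x,y)|>\eps\})\leq \eps\}
$$
one sees directly that $\tau(f,g)=\tau(0,f-g)$, since $\tau$ depends on $f$ and $g$ only through their difference.

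First I would verify that the $\mathrm{SR}^1$-norm (as defined just above the lemma) is indeed a norm, so in particular $\|f-g\|_{\mathrm{SR}^1}$ is the appropriate quantity controlling the distance from convergence, and that it is translation invariant in the relevant sense. Then, given a sequence $f_n$ converging to $f$ in $\mathrm{SR}^1$-norm, I would apply the preceding lemma to the function $f_n-f$, obtaining
$$
\tau(f_n,f)=\tau(0,f_n-f)\leq \sqrt{2\thn{f_n-f}}\to 0,
$$
which is exactly convergence in the $\tau$-metric.

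I do not expect any substantive obstacle here; the only minor point to watch is the identification $\tau(f_n,f)=\tau(0,f_n-f)$, which is simply a rewriting of the level set $\{|f_n-f|>\eps\}$ in the definition of $\tau$. No additional properties of virtual continuity, thickness, or the admissible metrics are needed beyond the inequality furnished by the lemma.
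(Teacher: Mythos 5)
Your argument is correct and is exactly how the paper intends the corollary to follow: the lemma gives $\tau(0,h)\leq\sqrt{2\thn{h}}$, and applying it to $h=f_n-f$ together with the observation that $\tau$ depends only on the difference yields the claim. The paper leaves this step implicit, so there is nothing to add.
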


Next theorem is an analogue of known L. V. Kantorovich's duality theorem \cite{K} in the mass transportation problem
(concretely, of duality between measures space with Kantorovich distance and and the space of Lipschitz functions, see
also \cite{VK}).

\begin{theorem}\label{dual}
\begin{equation}\label{duall}
\thn{f}=
\sup \left\{\int\limits_{X\times Y} |f(x,y)| h(x,y) d\mu(x) d\nu(y)\colon h\in \mathcal{S}\right\}.
\end{equation}
\end{theorem}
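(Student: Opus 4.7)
\emph{Easy direction.} For any $h\in\mathcal S$ (WLOG $h\geq 0$) and any admissible pair $(a,b)$ with $a,b\geq 0$ and $a\oplus b\geq |f|$,
\begin{align*}
\int |f|h\,d\mu d\nu &\leq \int(a(x)+b(y))h(x,y)\,d\mu(x)d\nu(y)\\
&= \int a(x)\Bigl(\int h(x,y)d\nu(y)\Bigr)d\mu(x)+\int b(y)\Bigl(\int h(x,y)d\mu(x)\Bigr)d\nu(y)\\
&\leq \int a\,d\mu+\int b\,d\nu,
\end{align*}
by Fubini and the marginal constraints $\int h(x,\cdot)d\nu\leq 1$, $\int h(\cdot,y)d\mu\leq 1$ almost everywhere that define $\mathcal S$. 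Taking $\sup$ over $h$ and $\inf$ over $(a,b)$ yields $\sup_h\int|f|h\leq \thn f$.

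\emph{Reverse direction: Lagrangian setup.} This is a Kantorovich-type strong duality. The definition of $\thn{f}$ is an infinite-dimensional linear program; with $h\geq 0$ as multiplier for $a\oplus b\geq |f|$, the Lagrangian
$$\mathcal L((a,b),h)=\int a(1-M_Y h)\,d\mu+\int b(1-M_X h)\,d\nu+\int |f|h\,d\mu d\nu$$
(writing $M_Y h(x):=\int h(x,y)d\nu(y)$, $M_X h(y):=\int h(x,y)d\mu(x)$) satisfies $\inf_{a,b\geq 0}\sup_{h\geq 0}\mathcal L=\thn{f}$ and $\sup_{h\geq 0}\inf_{a,b\geq 0}\mathcal L=\sup_{h\in\mathcal S}\int|f|h$ by direct partial optimizations (the outer $\inf$ forces $a\oplus b\geq|f|$; the outer $\sup$ forces $M_X h,M_Y h\leq 1$, i.e., $h\in\mathcal S$). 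The theorem is thus the minimax identity $\inf\sup=\sup\inf$.

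\emph{Plan.} I would prove the absence of a duality gap by discretization plus classical finite LP duality. First reduce to bounded $f$: on the right, $\sup_h\int|f\chi_{|f|\leq N}|h\nearrow\sup_h\int|f|h$ by monotone convergence; on the left, take an optimal $(a,b)$, use the decomposition $\{|f|>N\}\subset\{a>N/2\}\cup\{b>N/2\}$, and control the cross-product terms $b(y)\chi_{\{a>N/2\}}(x)\leq M\chi_{\{a>N/2\}}(x)+b(y)\chi_{\{b>M\}}(y)$ with a slowly growing $M=M(N)$, to obtain $\thn{f\chi_{|f|>N}}\to 0$. For bounded $f$, choose a refining sequence of finite product partitions $X=\sqcup_i X^n_i$, $Y=\sqcup_j Y^n_j$ generating the $\sigma$-algebra of $X\times Y$, and sandwich $|f|$ between step functions $\underline f_n\leq |f|\leq\overline f_n$ (the cellwise essential infimum and supremum). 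For a step function $g$ on the partition, both the infimum in $\thn g$ and the supremum in $\mathcal S$ reduce to piecewise constants --- replacing $a$ by its essential infimum over each $X^n_i$ preserves admissibility because $a(x')+b(y')\geq g_{ij}$ a.e.\ on $X^n_i\times Y^n_j$ implies the same with $a$ replaced by this essential infimum, and replacing $h$ by its cell averages preserves $\mathcal S$ and $\int gh$ --- and classical finite-dimensional LP duality closes that case.

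\emph{Main obstacle.} The hard part is the passage $n\to\infty$. On the right, monotone convergence gives $\sup_h\int\underline f_n h\nearrow\sup_h\int|f|h$, but the sandwich $\thn{\underline f_n}\leq\thn{|f|}\leq\thn{\overline f_n}$ requires $\thn{\overline f_n-\underline f_n}\to 0$, which is not automatic because the product-cell oscillation of a general bounded measurable $f$ need not vanish (this failure is exactly what makes virtual continuity nontrivial). To address this I would either adapt the partitions to $f$, using the one-variable Luzin corollary above to isolate the rough part onto a rectangle of small measure where the crude estimate $\thn g\leq \|g\|_\infty\cdot\min(\mu,\nu)$ of the supporting rectangle suffices; or bypass discretization entirely by a direct Hahn-Banach separation in the truncated primal set $\{(a,b)\in L^\infty_+(X)\times L^\infty_+(Y):a,b\leq\|f\|_\infty,\ a\oplus b\geq |f|\}$. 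The latter is convex and weak-$*$ compact (the constraint $a\oplus b\geq|f|$ is weak-$*$ closed because it is equivalent to $\int(a\oplus b-|f|)\varphi_1(x)\varphi_2(y)\,d\mu d\nu\geq 0$ for all nonneg $\varphi_1\in L^1(X),\varphi_2\in L^1(Y)$, and product functions span $L^1(X\times Y)$), so Banach-Alaoglu produces a minimizer and separation from the open halfspace $\{\int a+\int b<\thn f\}$ produces the Lagrange multiplier $h\in\mathcal S$.
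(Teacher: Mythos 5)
Your easy direction coincides with the paper's. For the hard direction the paper also uses Hahn--Banach, but the separation is performed in $L^1(X\times Y)$, between the convex set $A=\{a(x)+b(y)\colon a,b\geq 0,\ \int a+\int b\leq\thn{f}-\theta\}$ and the translated positive cone $B=\{g\colon g\geqmod |f|\}$. Since neither set has interior in $L^1$, the entire technical content of the proof is showing that these two disjoint convex sets lie at positive $L^1$-distance (the paper does this with Koml\'os's theorem applied to a distance-minimizing sequence); only then does Hahn--Banach yield a bounded functional $h\in L^\infty(X\times Y)$, which is subsequently shown to be nonnegative (because $B$ is a translated cone) and, after normalization, subbistochastic (by testing $A$ against $a=\frac{\thn{f}-\theta}{\mu(X_1)}\chi_{_{X_1}}$). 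Your proposal never reaches this point. The discretization plan is blocked by exactly the obstacle you name: a general measurable $f$ of finite norm need not have small product-cell oscillation off a set of small thickness --- that property characterizes virtual continuity (Theorem \ref{tau-virtual}) --- and neither ``adapting the partition via Luzin'' nor the crude rectangle bound repairs this for arbitrary $f$.

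Your fallback separation is set up in the wrong space and, as written, is circular. You propose to separate the weak-$*$ compact feasible set $\{(a,b)\colon 0\leq a,b\leq\|f\|_\infty,\ a(x)+b(y)\geq|f|\}$ from the half-space $\{\int a+\int b<\thn{f}\}$ inside $L^\infty(X)\times L^\infty(Y)$. A functional nontrivially separating anything from an open half-space must vanish on the kernel of the half-space's defining functional, hence is a positive multiple of it --- here of $(a,b)\mapsto\int a+\int b$. The resulting separation statement merely restates the definition of $\thn{f}$ and produces no multiplier $h$ on $X\times Y$. Your truncation and weak-$*$ compactness observations are genuinely useful, but they must be attached to the other set: the image $\{a(x)+b(y)\colon 0\leq a,b\leq\|f\|_\infty,\ \int a+\int b\leq\thn{f}-\theta\}$ is bounded in $L^\infty(X\times Y)$ and can be shown weakly compact in $L^1(X\times Y)$, and separating it there from the weakly closed cone $B$ would give $h$ without Koml\'os --- a legitimate variant of the paper's argument, but not the one you wrote. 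Two further gaps: the subbistochasticity of the separating functional is never extracted in your sketch, and your reduction to bounded $f$ presumes an optimal pair $(a,b)$ exists and that $\thn{f}<\infty$, so the case $\thn{f}=\infty$ (where one must show the supremum is also infinite) is not covered.
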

\begin{proof}
Show at first that LHS of \eqref{duall}
is not less than RHS. Indeed, if $h \in \mathcal{S}$, and functions
$a\colon X \to \mathbb{R}$ and $b \colon Y \to \mathbb{R}$ satisfy
$|f(x,y)|\leqmod a(x) + b(y)$, then
\begin{align*}
\int\limits_{X\times Y}|f(x,y) h(x,y)| d\mu(x)d\nu(y) &\leq
\int\limits_{X\times Y} |h(x,y)|(a(x)+b(y)) d\mu(x)d\nu(y)\leq\\
\leq \int\limits_{X} a(x) \int\limits_Y |h(x,y)| d\nu(y) d\mu(x)&+
\int\limits_{Y} b(y) \int\limits_X |h(x,y)| d\mu(x)d\nu(y) \leq\\
&\leq\int\limits_X a(x)d\mu(x)+\int\limits_Y b(y)d\nu(y).
\end{align*}
Taking infimum over admissible pairs of functions $a$ and $b$
we get desired inequality.

It suffices to verify that RHS of \eqref{duall} is not less than LHS.
Choose any $\theta>0$ and consider two convex subsets in
$L^1(X\times Y, \mu\times \nu)$:
\begin{align*}
A&=\{a(x)+b(y)\colon a ,b \geq 0, \int\limits_X a(x)d\mu(x)+\int\limits_Y b(y)d\mu(y)\leq \thn{f}-\theta\},\\
B &= \{g(x,y)\colon g(x,y)\geqmod |f(x,y)|\}.
\end{align*}
By the definition of norm such two sets are disjoint. Let's check that actually
the distance between them in $L^1$ is positive. If not, there exist
sequences of non-negative functions $a_n$, $b_n$, $g_n$
such that $a_n(x)+b_n(y) \in A$, $g_n\in B$ and $\|a_n(x)+b_n(y)-g_n(x,y)\|_{L^1(X\times Y)} \to 0$. Passing to subsequence if necessary we may get that $a_n(x)+b_n(y)-g_n(x,y) \to 0$ $\mu\times \nu-$almost everywhere. Using Koml\'os theorem \cite{JK} we pass to such
a subsequence that $\frac{1}{N}\sum_{k=1}^N a_k(x) \to a(x)$ $\mu-$almost
everywhere, $\frac{1}{N}\sum_{k=1}^N b_k(y) \to b(y)$ $\nu-$almost everywhere
for some non-negative functions $a\in L^1(X)$, $b \in L^1(Y)$.
Consider the function $g(x,y):=a(x)+b(y)$. Then $\frac{1}{N}\sum_{k=1}^N g_n(x,y) \to g(x,y)$ $\mu\times \nu-$almost everywhere. Clearly this implies $g \in B$. On the other
hand,
$$
\int a+\int b\leq \limsup_N \frac{1}{N}\sum_{k=1}^N \left(\int a_k+ \int b_k\right) \leq \thn{f}-\theta
$$
due to semicontinuity of integral of non-negative functions from below
(w.r.t. almost everywhere convergence). Hence $g\in A$. A contradiction.

Now we use separability theorem of Hahn--Banach. Since $A$ contains 0,
there exists a function $h \in L^{\infty}(X\times Y)$ such that
$\int gh< 1 $ for any $g \in A$ and $\int gh > 1$ for any $g \in B$.
Since  $B$ is a translate of a non-negative cone and $\int gh >1$ for any $g \in B$ we get $h \geq 0$ almost everywhere. For any set $X_1 \subset X$ put $a(x)=\frac{\thn{f}-\theta}{\mu(X_1)}\chi_{_{X_1}}(x)$. Then $a(x)\in A$, hence
$$
\frac{1}{\mu(X_1)}\int_{X_1} \int_Y (\thn{f}-\theta) h(x,y)d\nu(y)d\mu(x)\leq 1.
$$
Exchange the variables and write down similar inequalities. It allows
to conclude that the function $\tilde h=(\thn{f}-\theta) h$ belongs to $\mathcal S$.
Bur $|f| \in B$, hence
$$
\int |f| \tilde h =(\thn{f}-\theta) \int |f| h \geq \thn{f}-\theta.
$$
It suffices to remember that $\theta$ is arbitrary.
\end{proof}

Another theorem about $\rm{SR}^1$-norm:
\begin{theorem}\label{normeq}
For any measurable function $f\colon X\times Y \to \mathbb{R}$ inequalities hold\textup:
\begin{equation}\label{normequiv}
\frac{1}{4}\thn{f} \leq \int\limits_0^{\infty}\thi\{|f|\geq\lambda\}d\lambda \leq 2 \thn{f}.
\end{equation}
\end{theorem}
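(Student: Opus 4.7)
The plan is to prove both inequalities via the layer-cake representation $|f(x,y)|=\int_0^\infty \chi_{E_\lambda}(x,y)\,d\lambda$ where $E_\lambda:=\{|f|\geq\lambda\}$, together with Lemma~\ref{thick2} and, for the lower bound, the duality Theorem~\ref{dual}.

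For the right inequality I would start from any admissible regulator pair $a,b\geq 0$ with $|f(x,y)|\leqmod a(x)+b(y)$. Then
$$E_\lambda \subsetmod \{a\geq\lambda/2\}\times Y\;\cup\;X\times\{b\geq\lambda/2\},$$
so $\thi(E_\lambda)\leq\mu\{a\geq\lambda/2\}+\nu\{b\geq\lambda/2\}$. Integrating in $\lambda$ and using the identity $\int_0^\infty\mu\{a\geq\lambda/2\}\,d\lambda=2\int a$ (change of variable) gives
$$\int_0^\infty\thi(E_\lambda)\,d\lambda \;\leq\; 2\int_X a\,d\mu+2\int_Y b\,d\nu,$$
and taking the infimum over admissible $(a,b)$ yields $\int_0^\infty\thi\leq 2\thn{f}$.

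For the left inequality the cleanest route is via duality. By Theorem~\ref{dual}, $\thn{f}=\sup_{h\in\mathcal S}\int|f|\,h$, and we may take $h\geq 0$. Applying Fubini to the layer-cake gives
$$\int_{X\times Y}|f|\,h\,d(\mu\times\nu)=\int_0^\infty\!\Bigl(\int_{E_\lambda} h\,d(\mu\times\nu)\Bigr)d\lambda.$$
Now for any sets $\tilde X\subset X$, $\tilde Y\subset Y$ with $E_\lambda\subsetmod(\tilde X\times Y)\cup(X\times\tilde Y)$, subbistochasticity of $h$ (marginals at most $1$) gives $\int_{\tilde X\times Y}h\leq\mu(\tilde X)$ and $\int_{X\times\tilde Y}h\leq\nu(\tilde Y)$, whence $\int_{E_\lambda}h\leq\mu(\tilde X)+\nu(\tilde Y)$. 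Taking the infimum yields $\int_{E_\lambda}h\leq\thi(E_\lambda)$; integrating and passing to the supremum over $h$ gives in fact the sharper bound $\thn{f}\leq\int_0^\infty\thi(E_\lambda)\,d\lambda$, which a fortiori implies $\tfrac14\thn{f}\leq\int_0^\infty\thi$.

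The main obstacle is not really present in the dual approach (it is essentially a routine combination of Fubini and the dual characterization), but if one wanted to avoid Theorem~\ref{dual} one has the more hands-on alternative: use the dyadic bound $|f|\leq 2\sum_{n\in\mathbb Z}2^n\chi_{E_{2^n}}$, for each $n$ choose $\tilde X_n,\tilde Y_n$ nearly realizing the infimum in Lemma~\ref{thick2} for $E_{2^n}$, and set $a(x)=2\sum_n 2^n\chi_{\tilde X_n}(x)$, $b(y)=2\sum_n 2^n\chi_{\tilde Y_n}(y)$. Since $\lambda\mapsto\thi(E_\lambda)$ is nonincreasing, $\sum_n 2^n\thi(E_{2^n})\leq 2\int_0^\infty\thi(E_\lambda)\,d\lambda$, and one collects the advertised factor $\tfrac14$; here the only delicate point is ensuring a.e. finiteness and summability of the series, which is automatic when $\int_0^\infty\thi<\infty$ (and the claim is vacuous otherwise).
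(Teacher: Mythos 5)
Your proof is correct. The right-hand inequality is exactly the paper's argument: the sublevel inclusion $\{|f|\geq\lambda\}\subsetmod(\{a\geq\lambda/2\}\times Y)\cup(X\times\{b\geq\lambda/2\})$, integration in $\lambda$, and the infimum over regulator pairs. For the left-hand inequality, however, you take a genuinely different route. The paper argues directly and elementarily: it discretizes, writing $\int_0^\infty\thi\{|f|\geq\lambda\}\,d\lambda\geq\sum_k 2^{k-1}\thi\{|f|\geq 2^k\}$, picks near-optimal covers $(A_k,B_k)$ of each dyadic level set, and assembles the explicit regulator $a=\sum 2^{k+1}\chi_{A_k}$, $b=\sum 2^{k+1}\chi_{B_k}$ --- precisely the ``hands-on alternative'' you sketch at the end, and your remarks about monotonicity of $\lambda\mapsto\thi\{|f|\geq\lambda\}$ and summability of the series are the right ones to make that version airtight. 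Your primary route instead invokes the duality Theorem~\ref{dual}: for nonnegative subbistochastic $h$, the layer-cake plus Fubini and the marginal bounds give $\int_{E_\lambda}h\leq\thi(E_\lambda)$, hence $\thn{f}=\sup_h\int|f|h\leq\int_0^\infty\thi(E_\lambda)\,d\lambda$. This is legitimate (Theorem~\ref{dual} precedes this statement and its proof does not depend on it) and it buys a sharper constant --- $1$ in place of the paper's $\tfrac14$ --- at the cost of leaning on the Hahn--Banach/Koml\'os machinery behind the duality theorem, whereas the paper's dyadic construction is self-contained and constructive.
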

\begin{proof}
If $|f(x,y)|\leqmod a(x)+b(y)$, then
$$
\{(x,y)\colon |f(x,y)|\geq \lambda\}\subsetmod (\{x\colon a(x)\geq \lambda/2\}\times Y)\cup (X\times \{y\colon b(y)\geq \lambda/2\}),
$$
hence
$$
\thi\{|f|\geq \lambda\} \leq \mu\{a\geq \lambda/2\}+ \nu\{b\geq \lambda/2\}.
$$
Integrating by $\lambda$ we get
$$
\int\limits_0^{\infty}\thi\{|f|\geq\lambda\}d\lambda \leq \int_0^\infty (\mu\{a\geq \lambda/2\}+ \nu\{b\geq \lambda/2\})d\lambda=2\left(\int_X a+\int_Y b\right).
$$
Taking infimum by pairs of functions $a$ and $b$
we get the right inequality in \eqref{normequiv}.

Let's prove the left inequality.
Since $\thi\{|f|\geq \lambda\}$ decreases by $\lambda$, we have
\begin{equation}\label{eq101}
\int\limits_0^{\infty}\thi\{|f|\geq\lambda\}d\lambda \geq
\sum_{k\in \mathbb{Z}}2^{k-1}\thi\{|f|\geq 2^k\}.
\end{equation}
For any $\eps>0$ choose sets $A_k$ and $B_k$ so that
$\{|f|\geq 2^k\}\subsetmod (A_k\times Y)\cup (X\times B_k)$ and
$$
\mu(A_k)+\nu(B_k)\leq (1+\eps)\thi\{|f|\geq2^k\}.
$$
Take functions $a(x)=\sum 2^{k+1}\chi_{_{A_k}}(x)$ and
$b(y)=\sum 2^{k+1}\chi_{_{B_k}}(y)$.
It is easy to check that that $|f(x,y)| \leqmod a(x)+b(y)$, hence
$$
\thn{f}\leq \int_Xa+\int_Yb\leq 4(1+\eps)\sum_{k\in \mathbb{Z}}2^{k-1}\thi\{|f|\geq 2^k\}.
$$
The last inequality combined with \eqref{eq101} (and arbitrariness of $\eps$)
finishes the proof.
\end{proof}
This theorem has a useful
\begin{corollary}\label{cut}
If $\thn{f}<\infty$\textup, then a function $f$ is approximated in $\rm{SR}^1$-norm
by its cut-offs.
\end{corollary}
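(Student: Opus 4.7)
The plan is to reduce everything to Theorem \ref{normeq}, which equates the $\rm{SR}^1$-norm up to constants with $\int_0^\infty \thi\{|f|\geq\lambda\}\,d\lambda$. Since $\thn{f}<\infty$, this integral is finite, so its tail $\int_N^\infty \thi\{|f|\geq t\}\,dt$ tends to $0$ as $N\to\infty$. That convergence of the tail is exactly the quantity that will control the $\rm{SR}^1$-norm of $f$ minus its cut-off, so the whole argument is essentially a one-line calculation.

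First, I fix a notion of cut-off: set $f_N(x,y):=\operatorname{sign}(f(x,y))\cdot\min(|f(x,y)|,N)$, so that $|f(x,y)-f_N(x,y)|=(|f(x,y)|-N)^+$. Then for every $\lambda>0$ one has the identity
$$
\{(x,y):|f(x,y)-f_N(x,y)|\geq\lambda\}=\{(x,y):|f(x,y)|\geq N+\lambda\},
$$
valid pointwise (hence modulo $0$), which is what lets me convert a statement about $f-f_N$ into a statement about a tail set of $f$.

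Second, I apply the right-hand inequality of \eqref{normequiv} in Theorem \ref{normeq} to $f-f_N$ and change variable $t=N+\lambda$:
$$
\thn{f-f_N}\;\leq\;4\int_0^\infty\thi\{|f-f_N|\geq\lambda\}\,d\lambda\;=\;4\int_N^\infty \thi\{|f|\geq t\}\,dt.
$$
The left-hand inequality of \eqref{normequiv} tells me that $\int_0^\infty \thi\{|f|\geq t\}\,dt\leq 2\thn{f}<\infty$, so the tail integral on the right tends to $0$ as $N\to\infty$. Hence $\thn{f-f_N}\to 0$, which is the desired approximation.

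There is no real obstacle here; the only thing one has to be slightly careful about is matching the form of the cut-off to the layer-cake representation used in Theorem \ref{normeq}. Had I instead chosen the cut-off $\tilde f_N:=f\cdot\chi_{\{|f|\leq N\}}$, the same conclusion would follow from splitting $\int_0^\infty\thi\{|\tilde f_N-f|\geq\lambda\}\,d\lambda = N\cdot\thi\{|f|>N\}+\int_N^\infty\thi\{|f|\geq\lambda\}\,d\lambda$ and noting that convergence of $\int_0^\infty \thi\{|f|\geq t\}\,dt$ together with monotonicity of $\thi\{|f|\geq\cdot\}$ forces $N\thi\{|f|\geq N\}\to 0$ (by the standard trick $N\thi\{|f|\geq N\}\leq 2\int_{N/2}^N\thi\{|f|\geq t\}\,dt$).
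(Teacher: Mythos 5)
Your argument is correct and is essentially the paper's own proof: both pass from $\{|f-f_N|\geq\lambda\}\subset\{|f|\geq N+\lambda\}$ to the tail integral $\int_N^\infty\thi\{|f|\geq t\}\,dt$, which is finite by Theorem \ref{normeq} and hence tends to $0$, and then apply Theorem \ref{normeq} once more to return to the $\mathrm{SR}^1$-norm. (You have merely swapped the labels ``left-hand'' and ``right-hand'' for the two inequalities of \eqref{normequiv}; the inequalities you actually invoke are the correct ones.)
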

\begin{proof}
Let $f_N$ be two-sided cut-off of $f$
on level $N$. Then $\{|f-f_N|\geq\lambda\}\subset \{|f|\geq N+\lambda\}$, hence
$$
\int\limits_0^\infty \thi\{|f-f_N|\geq\lambda\} d \lambda \leq \int \limits_0^\infty \thi\{|f|\geq N+\lambda\}d\lambda =\int \limits_N^\infty \thi\{|f|\geq\lambda\}d\lambda\to 0
$$
for $N \to \infty$. By Theorem \ref{normeq}, $\thn{f-f_N}\to 0$.
\end{proof}

\begin{theorem}
The closure of step functions in $\rm{SR}^1$-norm consists exactly of all virtually continuous functions
having finite norm (in particular, each bounded virtually continuous function belongs
to this closure).
\end{theorem}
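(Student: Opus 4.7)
\medskip

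My plan is to prove the two inclusions separately. First, that every $\rm{SR}^1$-limit of step functions is virtually continuous and has finite $\rm{SR}^1$-norm; second, that every virtually continuous function with finite $\rm{SR}^1$-norm lies in the $\rm{SR}^1$-closure of step functions. The first inclusion is essentially a soft consequence of already established results, while the interesting direction is the second one.

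For the first inclusion, suppose step functions $f_n$ converge to $f$ in the $\rm{SR}^1$-norm. Step functions are bounded, hence have finite $\rm{SR}^1$-norm (a bounded function $|g|\leq C$ is dominated by the separate function $a(x)\equiv C$, $b(y)\equiv 0$). By the triangle inequality for $\thn{\cdot}$ we get $\thn{f}\leq \thn{f_n}+\thn{f-f_n}<\infty$ for large $n$. By Corollary \ref{normtau}, convergence in $\rm{SR}^1$ implies convergence in $\tau$; by Theorem \ref{tau-virtual} the $\tau$-limit of step functions is virtually continuous, so $f$ is virtually continuous.

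For the reverse inclusion, let $f$ be virtually continuous with $\thn{f}<\infty$. By Corollary \ref{cut}, the cut-offs $f_N$ approximate $f$ in $\rm{SR}^1$-norm, so it suffices to approximate each bounded virtually continuous function $g$ (say $|g|\leq N$) by step functions in $\rm{SR}^1$-norm. Fix $\eps>0$. By Theorem \ref{tau-virtual}, there exists a step function $s$, which we may take to satisfy $|s|\leq N$, such that the set $E=\{(x,y):|g(x,y)-s(x,y)|>\eps\}$ has $\thi(E)\leq\eps$. Outside $E$ we have $|g-s|\leq\eps$ almost everywhere, and everywhere $|g-s|\leq 2N$. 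Apply Theorem \ref{normeq}:
\begin{equation*}
\thn{g-s}\leq 4\int_0^{\infty}\thi\{|g-s|\geq\lambda\}\,d\lambda.
\end{equation*}
Split the integral at $\lambda=\eps$ and $\lambda=2N$: for $\lambda\leq\eps$ the thickness is at most $1$, contributing $\eps$; for $\eps<\lambda\leq 2N$ the set is contained mod $0$ in $E$, so its thickness is at most $\eps$, contributing at most $2N\eps$; for $\lambda>2N$ the set is empty. Hence $\thn{g-s}\leq 4(1+2N)\eps$, which is as small as we wish.

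Combining the two steps, given $f$ virtually continuous with $\thn{f}<\infty$, pick $N$ large so that $\thn{f-f_N}<\eps/2$ by Corollary \ref{cut}, then pick a step function $s$ with $\thn{f_N-s}<\eps/2$ by the bounded case. This gives $\thn{f-s}<\eps$, finishing the proof. The main obstacle was really step 3b: passing from the $\tau$-approximation provided by Theorem \ref{tau-virtual}, which only controls the \emph{thickness} of the set where $g$ and $s$ differ significantly, to an actual $\rm{SR}^1$-norm approximation; Theorem \ref{normeq} is precisely what bridges this gap, provided we have a uniform bound on $|g-s|$ to truncate the integral, which is why the cut-off reduction is needed first.
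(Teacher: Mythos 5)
Your proof is correct and follows essentially the same route as the paper: reduce to the bounded case via Corollary \ref{cut}, take a $\tau$-close step function of the same bound from Theorem \ref{tau-virtual}, and convert $\tau$-closeness into $\rm{SR}^1$-closeness using the uniform bound. The only cosmetic difference is that the paper performs this last conversion by exhibiting an explicit separate majorant $\eps/N+2N\chi_{X_0}(x)+2N\chi_{Y_0}(y)$ instead of invoking Theorem \ref{normeq}.
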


\begin{proof}
Corollary \ref{normtau} and Theorem \ref{tau-virtual} imply
that $SR^1$-limit of step functions is virtually continuous.

Now we have to approximate any virtual
continuous function with finite  $\rm{SR}^1$-norm by step
functions.
Assume that $\thn{f}<\infty$ and $\eps>0$. By Corollary \ref{cut}
two-sided cut-off $f_N$ of the function $f$ approximates $f$: $\thn{f-f_N}<\eps$
for large enough $N$. Fix such $N$. Next, the function $f_N$ is
virtually continuous, hence it is $\tau$-limit of step  functions
by Theorem \ref{tau-virtual}. That is,  $\tau(g,f_N)<\eps/N$ for
some step-function $g$. We may suppose that
absolute values of $g$ do not exceed  $N$ (else replace
$g$ to its cut-off). Since $\tau(g,f_N)<\eps/N$, there
exist sets $X_0\subset X$, $Y_0 \subset Y$ such that
$\mu(X_0)<\eps/N$, $\nu(Y_0)<\eps/N$ and $|f_N-g|<\eps/N$
almost everywhere on $(X\setminus X_0)\times (Y\setminus Y_0)$.
But then we have
$$
|f_N(x,y)-g(x,y)|\leqmod \eps/N+2N\chi_{_{X_0}}(x)+2N\chi_{_{Y_0}}(y),
$$
hence $\thn{f_N-g}\leq \eps/N+2N(\mu(X_0)+\nu(Y_0))\leq 5\eps.$ Thus $\thn{f-g}\leq 6\eps$ and Theorem is proved.
\end{proof}

Denote by $VC^1$ the space of all virtually continuous functions with
finite $\rm{SR}^1$-norm.
It is an analogue of the space $L^1$ for virtually continuous functions
and is a pre-dual for the space of polymorphisms with bounded densities of projections.

\begin{theorem}\label{dualspace}
The space dual to $VC^1$ is a space $QB^{\infty}$ of quasibistoshastic signed measures $\eta$ on $X \times Y$ with finite norm
$$\|\eta\|_{\qbs}=\max\left\{\Big|\Big|\frac{\partial P^x_*|\eta|}{\partial \mu}\Big|\Big|_{L^\infty(X,\mu)},\Big|\Big|\frac{\partial P^y_*|\eta|}{\partial \nu}\Big|\Big|_{L^\infty(Y,\nu)}\right\},$$
where $P^x$ and $P^y$ are projections onto $X$ and $Y$ respectively and $|\eta|$ is a full variation of a signed measure $\eta$.
A coupling between $\eta\in QB^{\infty}$ and $f(x,y)\in VC^1$ is defined as
$\int \tilde{f} d\eta,$ where $\tilde{f}$ is a properly virtually continuous function equivalent to $f$\footnote{
Remark in the end of p. \ref{bsm}\textup guarantees that a function $\tilde{f}$ is $\eta$-measurable\textup,
and the value of integral does not depend on choice of $\tilde{f}$ for fixed $f$.}.
\end{theorem}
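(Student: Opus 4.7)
The plan is to establish an isometric isomorphism $\Phi\colon QB^\infty\to(VC^1)^*$ by $\eta\mapsto L_\eta$, where $L_\eta(f):=\int\tilde f\,d\eta$. The easy direction, $\|L_\eta\|\leq\|\eta\|_\qbs$, comes first: by the remark closing Subsection~\ref{bsm}, the integrand $\tilde f$ is well-defined $|\eta|$-a.e.\ (since $P^x_*|\eta|\ll\mu$ and $P^y_*|\eta|\ll\nu$ kill the exceptional sets), and for any non-negative regulator $(a,b)$ with $|f|\leqmod a(x)+b(y)$,
\[
\Big|\int\tilde f\,d\eta\Big|\leq \int a\,dP^x_*|\eta|+\int b\,dP^y_*|\eta|\leq \|\eta\|_\qbs\Big(\int a\,d\mu+\int b\,d\nu\Big).
\]
Taking the infimum over $(a,b)$ gives $|L_\eta(f)|\leq \|\eta\|_\qbs\thn{f}$.

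For surjectivity and the reverse bound, given $L\in(VC^1)^*$ I would construct $\eta$ from rectangle data. Let $\mathcal R$ denote the algebra of finite disjoint unions of measurable rectangles. Since every $\chi_{A\times B}\in VC^1$ with $\thn{\chi_{A\times B}}\leq\min(\mu(A),\nu(B))$, the formula $\eta_0(R):=L(\chi_R)$ defines a finitely additive bounded set function on $\mathcal R$. Sigma-additivity reduces to $\eta_0(S_n)\to 0$ whenever $S_n\in\mathcal R$ decrease to $\varnothing$, which I would verify by exhibiting regulators for $\chi_{S_n}$ with vanishing total integral, exploiting the disjoint-rectangle structure of each $S_n$ and that $\mu\times\nu(S_n)\to 0$. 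Carath\'eodory extension then produces a signed measure $\eta$ on the product sigma-algebra agreeing with $\eta_0$ on $\mathcal R$.

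To place $\eta$ in $QB^\infty$, fix measurable $A\subset X$ and any partition $E_1,\dots,E_m\in\mathcal R$ of $A\times Y$; the step function $g:=\sum_i(\operatorname{sign}\eta(E_i))\chi_{E_i}$ satisfies $|g|\leq\chi_{A\times Y}$, so $\thn{g}\leq\mu(A)$, while $L(g)=\sum_i|\eta(E_i)|$, giving $\sum_i|\eta(E_i)|\leq\|L\|\mu(A)$. Approximating arbitrary measurable partitions of $A\times Y$ by $\mathcal R$-partitions upgrades this to $|\eta|(A\times Y)\leq\|L\|\mu(A)$, and symmetrically in $Y$; hence $\|\eta\|_\qbs\leq\|L\|$. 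Since $L$ and $L_\eta$ agree on step functions by construction and step functions are $\thn{\cdot}$-dense in $VC^1$ by the preceding theorem, $L=L_\eta$ throughout, whence $\|L\|=\|\eta\|_\qbs$. The main obstacle is the measure-theoretic interplay between $\mathcal R$-data and the full product sigma-algebra: both the countable additivity of $\eta_0$ and the variation estimate require approximating measurable sets by rectangle-algebra sets while preserving $\mathrm{SR}^1$-norm bounds, and this is where the specific geometry of the $\mathrm{SR}^1$-norm on indicators is essential.
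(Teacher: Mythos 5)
Your overall architecture coincides with the paper's: the easy inequality via regulators is exactly the paper's first half, and the hard direction in both cases proceeds by setting $\eta_0(R)=L(\chi_R)$ on the algebra of step sets, extending to a countably additive signed measure, and bounding the marginal densities by testing $L$ against $\pm1$-valued step functions dominated by $\chi_{A\times Y}$. The gap is at the one genuinely delicate point: the countable additivity of $\eta_0$. You propose to verify continuity at $\varnothing$ by ``exhibiting regulators for $\chi_{S_n}$ with vanishing total integral, exploiting \dots that $\mu\times\nu(S_n)\to0$.'' This cannot work. By Theorem \ref{normeq}, $\thn{\chi_Z}$ is comparable to $\thi(Z)$, and thickness is controlled neither by product measure nor by monotone limits from above: the paper's own example $\{0<|x-y|<1/n\}$ has thickness $1$ for every $n$ with empty intersection, and a step-set version is the dyadic staircase $S_k=\bigcup_{i=0}^{2^k-1}[i2^{-k},(i+1)2^{-k})^2$, for which $\mu\times\nu(S_k)=2^{-k}$ while each dyadic square forces $I_i\subsetmod\tilde X$ or $I_i\subsetmod\tilde Y$, so $\thi(S_k)=1$ and hence $\thn{\chi_{S_k}}\geq 1/2$. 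Thus $\thn{\chi_{S_n}}$ need not tend to zero even when the measures do, and no norm estimate of the kind you describe can give $L(\chi_{S_n})\to0$.

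The paper closes this gap by a different mechanism. It reduces sigma-additivity (via the Kolmogorov--Hahn criterion applied to $|\eta|$) to the identity $\eta(Q)=\sum_i\eta(Q_i)$ for a single rectangle $Q$ written as a countable disjoint union of rectangles $Q_i$; it then constructs admissible semimetrics on $X$ and $Y$ under which all the indicators $\chi_Q$, $\chi_{Q_i}$ become uniformly continuous with disjoint supports on the completion $K$ of $X\times Y$. Since $C(K)$ embeds into $VC^1$ with norm at most $1$, the functional $F$ restricts to a continuous functional on $C(K)$, which by the Riesz representation theorem is integration against a genuine countably additive Borel measure on the compact metric space $K$; countable additivity of that measure (Lemma \ref{FUNCK}) then yields the required identity. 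In other words, the needed continuity of $L$ along decreasing step sets is extracted not from smallness of $\mathrm{SR}^1$-norms but from an a posteriori measure representation of $L$ on a suitable compactification. Without this (or an equivalent device) your construction of $\eta$ does not get off the ground. The remaining steps of your proposal --- the variation bound $|\eta|(A\times Y)\leq\|L\|\,\mu(A)$ and the identification $L=L_\eta$ by $\mathrm{SR}^1$-density of step functions --- are correct and agree with the paper.
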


In order to prove Theorem \ref{dualspace} we need the following

\begin{lemma} \label{FUNCK}
Let $K$ be a metric compact space\textup, $F$ be a continuous linear functional on the space $C(K)$.
Assume that continuous functions $f_1,f_2,\dots$ on $K$ have uniformly bounded norms and their supporters
are disjoint. Then series $\sum F(f_i)$ converges absolutely. If (defined pointwise)
function $f=\sum f_i$ is continuous, then $F(f)=\sum F(f_i)$.
\end{lemma}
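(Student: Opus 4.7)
The plan is to invoke the Riesz--Markov--Kakutani representation theorem to identify $F$ with integration against a finite signed Radon measure $\nu$ on $K$, so that $F(g)=\int_K g\, d\nu$ for every $g\in C(K)$, with $|\nu|(K)=\|F\|$. Let $M:=\sup_i \|f_i\|_{C(K)}<\infty$. The whole argument hinges on the observation that disjointness of supports means that at each point of $K$ at most one of the $f_i$ is nonzero.

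For absolute convergence of $\sum F(f_i)$ I would avoid the measure $\nu$ and argue directly: for any choice of signs $\eps_i\in\{\pm 1\}$ and any $n$, the function $g_n:=\sum_{i=1}^n \eps_i f_i$ has $\|g_n\|_{C(K)}\le M$, because on each point at most one summand contributes. Picking $\eps_i=\operatorname{sign} F(f_i)$ yields
$$
\sum_{i=1}^n |F(f_i)| = F(g_n) \le M\,\|F\|,
$$
which is independent of $n$, so the series converges absolutely.

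For the identity $F(f)=\sum F(f_i)$ I would set $s_n:=\sum_{i=1}^n f_i$. Each $s_n$ is continuous, $\|s_n\|_{C(K)}\le M$, and $s_n\to f$ \emph{pointwise}: indeed, for a given $x\in K$ either $x$ lies in some (unique) $\supp f_i$, in which case $s_n(x)=f_i(x)=f(x)$ for all $n\ge i$, or $x$ belongs to no support, in which case $s_n(x)=0=f(x)$. Then dominated convergence applied to the finite measure $|\nu|$ with majorant $M$ gives
$$
F(s_n)=\int_K s_n\, d\nu \longrightarrow \int_K f\, d\nu = F(f),
$$
while linearity gives $F(s_n)=\sum_{i=1}^n F(f_i)$. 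Combining these two formulas yields the claimed equality.

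The main obstacle is that the norm topology on $C(K)$ is too weak for the second step: $\|s_n-f\|_{C(K)}$ need not tend to zero (think of continuous bumps $f_i$ of height $1$ sitting on a shrinking sequence of disjoint intervals), so one cannot pass to the limit by continuity of $F$ alone. The only way around this is to pass through the Riesz representation and invoke the dominated convergence theorem; once $F$ is realized as integration against a bounded measure, everything else reduces to the pointwise convergence $s_n\to f$ together with the uniform bound $\|s_n\|_\infty\le M$, both guaranteed by the disjointness of supports.
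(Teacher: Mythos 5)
Your proof is correct and follows essentially the same route as the paper: Riesz representation of $F$ as integration against a finite signed Borel measure $\nu$ of finite variation, pointwise convergence of the partial sums $s_n\to f$ with the uniform majorant $M$, and the dominated convergence theorem. The only (harmless) deviation is in the absolute-convergence step, where the paper appeals to countable additivity and finiteness of variation of $\nu$ (essentially $\sum_i \int |f_i|\,d|\nu| \le M \sum_i |\nu|(\supp f_i) \le M|\nu|(K)$ by disjointness of supports), whereas you bypass the measure entirely via the sign trick $\bigl\|\sum_{i\le n}\eps_i f_i\bigr\|_{C(K)}\le M$; both arguments are valid.
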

\begin{proof} By Riesz theorem our functional $F$ is integrating over signed Borel measure of finite variation.
Absolute convergence of the above series follows from countable additivity and finiteness of variation.
Equality $F(f)=\sum F(f_i)$ follows from Lebesgue theorem on summable majorant.
\end{proof}

\begin{proof}[Proof of Theorem \ref{dualspace}.]
Let $\eta$ be such a signed measure that $\|\eta\|_{\me}<\infty$.
Note that if for a step function $h$ the estimate $|h(x,y)|\leq a(x)+b(y)$
holds $\mu\times \nu$-almost everywhere, that it holds on the product of sets having full measure,
thus $|\eta|$-almost everywhere. It allows to integrate this inequality over measure $|\eta|$, this gives
\begin{align*}
\left|\int h(x,y) d\eta\right|\leq  \int |h| d|\eta|\leq& \int a(x)+b(y) d|\eta|\leq \\
\leq&\|\eta\|_{\me}\left(\int |a(x)| d\mu(x)+\int |b(y)| d\nu(y)\right).
\end{align*}
Taking infimum in $a,b$ such that $|h|\leqmod a(x)+b(y)$ we get $|\int h d\eta|\leq \thn{h}\|\eta\|_{\me}$,
as desired.

Now we need to show that any continuous functional $F$ on $VC^1$
has such a representation. We may suppose that $\|F\|=1$. For a step set (finite union
of rectangles)
$Z\subset X\times Y$ define
\begin{align*}
\eta(Z)&:=F(\chi_{_{Z}}), \\
|\eta|(Z)&:=\sup \sum_{\sqcup Z_i\subset Z} |\eta (Z_i)|,
\end{align*}
where supremum is taken over all sequences of disjoint step sets $Z_1,\dots$ in $Z$.
Obviously, supremum may be taken over finite families, and we may also take rectangular sets $Z_i$.
Above defined functions of sets are finitely additive.

For any finite family of disjoint step sets $Z_i \subset Z$ we have
$\sum |\eta(Z_i)|=F(\sum \pm \chi_{_{Z_i}})$. Moreover,
$|\pm \chi_{_{Z_i}}|\leq \chi_{_Z}$. If $Z=X_1\times Y$, then $\thn{\sum \pm \chi_{_{Z_i}}}\leq |X_1|$, hence $|\eta|(X_1\times Y)\leq |X_1|$.
Analogously
$|\eta(X\times Y_1)|\leq |Y_1|$. Let's check that finitely additive functions $\eta$ and $|\eta|$,
defined on the algebra of step sets, may be extended to the sign measure and measure on the whole
$\sigma$-algebra $\mathfrak{A}\times \mathfrak{B}$ on the space $X\times Y$.

By Kolmogorov--Hahn criterion it suffices to verify that $|\eta|(Z)=\sum |\eta|(Z_i)$
whenever $Z_i$ are disjoint step sets and $Z=\sqcup Z_i$ is a step set too.
Since $|\eta|$ is premeasure, inequality $|\eta|(Z)\geq \sum |\eta|(Z_i)$ is clear. It suffices to prove the opposite inequality.
By definition $|\eta|(Z)$ is the supremum of sums $\sum |\eta(P_k)|$ over all finite
families of disjoint rectangles $P_k$ in $Z$, hence it suffices to prove that
$
\sum_k |\eta(P_k)|\leq  \sum_i |\eta|(Z_i).
$
Since $|\eta|$ is finitely additive it suffices to prove that
$
|\eta(P_k)|\leq  \sum_i |\eta|(Z_i\cap P_k)
$
for each rectangle $P_k$.
Dividing each set $Z_i \cap P_k$ onto finitely many rectangles we reduce it to inequality like
$$
|\eta(Q)|\leq  \sum_i |\eta(Q_i)|,
$$
where rectangle $Q$ is a union of disjoint rectangles $Q_i$.

Considering a series of cut semimetrics we may easily construct admissible semimetrics
$\rho_X$, $\rho_Y$ such that metric spaces $(X,\rho_X)$, $(Y,\rho_Y)$ are precompact and projections of sides of $Q_i$ and $Q$
have positive distance to their complements. In this case all functions $\chi_{_{Q_i}}$ and $\chi_{_Q}$
are uniformly continuous on $(X\times Y, \rho_X\times \rho_Y)$
and therefore may be extended continuously to its completion
(as 1 to the closure of rectangle and as 0 to the closure of its
complement).
Supporters of extended functions are still disjoint. The space of continuous functions on the completion of $X\times Y$
embeds into $VC^1$ with norm at most 1, hence $F$ acts as a continuous functional on it.
Applying Lemma \ref{FUNCK} to the sequence $\chi_{_{Q_i}}$ we get
$$
\eta(Q)= F(\chi_{_Q})=\sum_i F(\chi_{_{Q_i}})=\sum_i \eta(Q_i),
$$
as desired.
\end{proof}

\begin{corollary} For virtually continuous functions from the space $VC^1$ \textup(in particular\textup,
for bounded virtually continuous functions\textup) there exist well defined
integrals not only over sets of positive measure\textup, as for all summable functions\textup, but
over bistochastic \textup(singular\textup) measures like Lebesgue measure on the diagonal $\{x=y\}\subset [0,1]^2$\textup,
or on graphs of measure preserving maps.
So, virtually continuous functions have a ``trace \textup(restriction\textup) on diagonal'' in the sense of trace theorems.
\end{corollary}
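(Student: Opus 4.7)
The plan is a direct application of Theorem \ref{dualspace}. First I would observe that any bistochastic probability measure $\eta$ on $X\times Y$ --- one whose marginals coincide with $\mu$ and $\nu$ --- automatically lies in $QB^{\infty}$: the Radon--Nikodym derivatives $\partial P^x_*|\eta|/\partial\mu$ and $\partial P^y_*|\eta|/\partial\nu$ are then identically $1$, so $\|\eta\|_{\qbs}=1$. More generally, any signed quasi-bistochastic measure whose marginal densities belong to $L^\infty$ belongs to $QB^\infty$; in particular this is the case for the Lebesgue measure on the diagonal $\{x=y\}\subset[0,1]^2$ and for the measure supported on the graph of any measure-preserving map $T\colon (X,\mu)\to(Y,\nu)$, whose marginals are $\mu$ and $T_*\mu=\nu$ respectively.

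Next, Theorem \ref{dualspace} identifies each such $\eta$ with a continuous linear functional on $VC^1$, acting by $f\mapsto\int\tilde f\,d\eta$ for a properly virtually continuous representative $\tilde f$ of $f$. The observation recorded in subsection \ref{bsm} --- that sets of zero proper thickness are $|\eta|$-null for any quasi-bistochastic $\eta$, and that two properly virtually continuous functions equivalent $\mu\times\nu$-almost everywhere agree on a product of sets of full measure by Proposition \ref{proper-coinciding} --- implies both that $\tilde f$ is $|\eta|$-measurable and that $\int\tilde f\,d\eta$ depends only on the equivalence class of $f$. This yields an unambiguous pairing $\int f\,d\eta$ for every $f\in VC^1$ and every such $\eta$, which moreover satisfies $|\int f\,d\eta|\le\thn{f}\,\|\eta\|_{\qbs}$.

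Finally I would specialise to the stated examples. When $\eta$ is the diagonal measure on $[0,1]^2$, the resulting functional on $VC^1$ is by construction a \emph{trace} in the sense of Sobolev trace theorems: it restricts $f$ to a set of $\mu\times\nu$-measure zero, and the formula $\int f\,d\eta$ is well defined despite the ambient indeterminacy of $f$ on null sets. Similarly, for the graph of a measure-preserving $T$, the pairing $\int f\,d\eta=\int_X f(x,Tx)\,d\mu(x)$ acquires a meaning for $f\in VC^1$, a fact which is false for a generic element of $L^1(\mu\times\nu)$.

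The genuine content here has already been done in the earlier material --- existence of a properly virtually continuous representative (Theorem \ref{polnayamera}), uniqueness of restrictions to full-measure rectangles (Proposition \ref{proper-coinciding}), and the duality of Theorem \ref{dualspace}. The present Corollary is essentially a translation of those results into the language of singular measures and restrictions, and I do not anticipate any serious obstacle beyond assembling these ingredients cleanly.
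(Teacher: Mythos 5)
Your argument is correct and follows exactly the route the paper intends: the corollary is a direct assembly of Theorem \ref{dualspace} (bistochastic measures lie in $QB^{\infty}$ with norm $1$ and hence act as bounded functionals on $VC^1$) together with the remark at the end of Subsection \ref{bsm} and Proposition \ref{proper-coinciding}, which guarantee that the pairing $\int \tilde f\,d\eta$ is well defined on equivalence classes. No further comment is needed.
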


As an application we prove a variant of continuous Hall lemma, Borel version of which is given in appendix
\cite{LA} to the book \cite{L}:

\begin{theorem}\label{Hall}
Let $Z\subset X\times Y$ be virtually closed set. then two following conditions are equivalent:

\textup{(i)} There exists a bistochastic measure $\lambda$ such that $\lambda(Z)=1$\textup;

\textup{(ii)} Proper thickness $\sthi(Z)$ of the set  $Z$ equals $1$.
\end{theorem}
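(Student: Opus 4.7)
The plan is to address the two directions separately. The direction (i) $\Rightarrow$ (ii) is immediate: if $\lambda$ is bistochastic with $\lambda(Z)=1$, and $Z\subset(\tilde X\times Y)\cup(X\times\tilde Y)$ pointwise, then
$$
1=\lambda(Z)\leq \lambda(\tilde X\times Y)+\lambda(X\times\tilde Y)=\mu(\tilde X)+\nu(\tilde Y),
$$
so $\sthi(Z)\geq 1$; the trivial cover $\tilde X=X,\ \tilde Y=\varnothing$ gives $\sthi(Z)\leq 1$.

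For the main direction (ii) $\Rightarrow$ (i), my plan is to transfer the problem to a Polish product and invoke the classical Strassen--Kellerer coupling theorem. Using the virtual closedness of $Z$ together with part 2 of Lemma \ref{virtual-structure}, I would select admissible semimetrics $\rho_X,\rho_Y$ and sets $X'\subset X$, $Y'\subset Y$ of full measure such that $Z\cap(X'\times Y')$ is closed in $(X'\times Y',\rho_X\times\rho_Y)$. Completing $(X',\rho_X)$, $(Y',\rho_Y)$ to Polish spaces $\bar X$, $\bar Y$ and pushing $\mu,\nu$ forward to Borel probability measures $\bar\mu,\bar\nu$ concentrated on $X',Y'$, let $\bar Z$ be the closure of $Z\cap(X'\times Y')$ in $\bar X\times\bar Y$; elementary topology gives $\bar Z\cap(X'\times Y')=Z\cap(X'\times Y')$.

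Any probability measure $\lambda$ on $\bar X\times\bar Y$ with marginals $\bar\mu,\bar\nu$ is automatically concentrated on $X'\times Y'$ (the missing horizontal/vertical strips have zero-measure projections), whence $\lambda(\bar Z)=\lambda(Z\cap(X'\times Y'))$, and any such $\lambda$ with $\lambda(\bar Z)=1$ is a bistochastic measure on $X\times Y$ with $\lambda(Z)=1$. By Strassen--Kellerer, such a $\lambda$ exists if and only if for every pair of closed sets $A\subset\bar X$, $B\subset\bar Y$ with $\bar Z\subset(A\times\bar Y)\cup(\bar X\times B)$ one has $\bar\mu(A)+\bar\nu(B)\geq 1$. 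To deduce this from $\sthi(Z)=1$, set
$$
\tilde X:=(A\cap X')\cup(X\setminus X'),\qquad \tilde Y:=(B\cap Y')\cup(Y\setminus Y'),
$$
which are measurable in the original $\sigma$-algebras because Borel sets of an admissible metric lie in $\mathfrak A$. A pointwise case-check yields $Z\subset(\tilde X\times Y)\cup(X\times\tilde Y)$, so $\sthi(Z)\leq\mu(\tilde X)+\nu(\tilde Y)=\bar\mu(A)+\bar\nu(B)$, and $\sthi(Z)=1$ gives the marriage inequality.

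The main obstacle is precisely this last step: proper thickness is defined without a mod-0 relaxation, so one has to carry the null-measure complements $X\setminus X'$, $Y\setminus Y'$ explicitly and absorb them into $\tilde X$, $\tilde Y$ at no cost in measure, as above. Once the Strassen--Kellerer marriage condition on the Polish pair $(\bar X,\bar Y,\bar\mu,\bar\nu)$ is established, the desired bistochastic measure on $Z$ is produced directly by the classical theorem.
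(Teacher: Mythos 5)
Your proof is correct, but it takes a genuinely different route from the paper's. You reduce the statement to the classical Strassen--Kellerer marriage theorem: via Lemma \ref{virtual-structure} you realize the virtually closed $Z$ as a genuinely closed subset of a full-measure product $X'\times Y'$, complete to a Polish product, and check the marriage condition there using $\sthi(Z)=1$ (your case-check with $\tilde X=(A\cap X')\cup(X\setminus X')$ is exactly the right way to absorb the null strips at zero cost). The paper instead stays entirely inside its own machinery: it takes admissible metrics making $Z$ closed, considers the properly virtually continuous function $f(x,y)=\exp(-d((x,y),Z))$, shows $\thn{f}=1$ by applying Lemma \ref{virtual-thickness} to the open $\eps$-neighborhoods $Z_\eps$, and then invokes Hahn--Banach together with Theorem \ref{dualspace} (the identification of $(VC^1)^*$ with $QB^{\infty}$) to produce a norm-attaining subbistochastic functional, which the chain of equalities $\int f\,d\lambda=\int_0^1|\lambda|(f^{-1}[t,1])\,dt=1$ forces to be a bistochastic measure concentrated on $f^{-1}\{1\}=Z$. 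Your approach is shorter and makes transparent that the real novelty is the passage from ``virtually closed'' to ``topologically closed on a Polish sub-product'' (the Borel version being the known result in \cite{LA} that the paper cites); the paper's approach is self-contained and is designed to exercise the $VC^1$--$QB^{\infty}$ duality, producing the coupling by norm attainment rather than by importing Strassen's theorem. Two small points to tighten: (a) state the marriage condition for Borel (or open) covers $A,B$ rather than only closed ones, since that is the form in which the classical theorem is sharp --- your verification works verbatim for Borel $A,B$ because Borel sets of an admissible metric lie in $\mathfrak A$ by Proposition 1; (b) before completing, upgrade the semimetrics from Lemma \ref{virtual-structure} to admissible metrics (e.g.\ add a fixed admissible metric; closedness of $Z\cap(X'\times Y')$ survives the refinement of topology), so that $X'\hookrightarrow\bar X$ is an honest embedding.
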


\begin{proof} (ii) immediately follows from (i). Let's prove converse implication.
Without loss of generality the set
$Z$ is closed in the metric $d=\rho_X\times \rho_Y$, where $\rho_X$, $\rho_Y$ are admissible metrics on
$X$, $Y$. Then the function $f(x,y):=\exp(-d((x,y),Z)))$ is properly virtually continuous.

We claim that its $VC^1$-norm equals 1. If not, there exist non-negative functions $a(x)$,
$b(y)$ such that $a(x)+b(y)\geq f(x,y)$ almost everywhere and $\int a+\int b=e^{-2\eps}$ for some $\eps>0$.
Consider $\varepsilon$-neighborhood $Z_{\varepsilon}$ of the set $Z$ in metric $d$.
It is virtually open set, hence by Lemma~\ref{virtual-thickness} its proper thickness coincides
with thickness (and therefore equals 1). We have
$$
a(x)+b(y)\geq e^{-\eps} \chi_{Z_{\varepsilon}}
$$
for almost all $x\in X,y\in Y$, thus
$$
\thi\left(Z_{\varepsilon}\right)\leq e^{\eps} \left(\int a+\int b\right)=e^{-\eps}.
$$
A contradiction.

For an element with unit norm in a Banach space there exists a linear functional of norm
1 which attains its norm on this element. By Theorem
\ref{dualspace} this functional corresponds to a subbistochastic functional $\lambda$, $\int fd\lambda=1$. But
$$
\left|\int fd\lambda\right|\leq \int |f| d|\lambda|=\int_0^1 |\lambda|\left(f^{-1}[t,1])\right) dt\leq 1,
$$ and since all inequalities are just equalities we have $\lambda(Z)=\lambda(f^{-1}\{1\})=1$.
\end{proof}

As usual, Hall lemma admits a standard self-improvement:
\begin{corollary}
For virtually closed $Z$ we always have $\max \lambda(Z)=\sthi(Z),$ where maximum is taken
over all bistochastic measures $\lambda$.
\end{corollary}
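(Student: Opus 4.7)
The plan is the classical self-improvement pattern for Hall-type theorems: reduce the general case to Theorem~\ref{Hall} (the case $\sthi(Z) = 1$) by enlarging both spaces with auxiliary ``slack'' factors.

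The inequality $\lambda(Z) \leq \sthi(Z)$ for every bistochastic $\lambda$ is immediate: if $Z \subset (\tilde X \times Y) \cup (X \times \tilde Y)$ then $\lambda(Z) \leq \mu(\tilde X) + \nu(\tilde Y)$ since $\lambda$ has marginals $\mu$ and $\nu$, and one takes the infimum.

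For the reverse direction set $c := \sthi(Z)$ and assume $0 < c < 1$ (the edge cases are trivial or are exactly Theorem~\ref{Hall}). Adjoin auxiliary standard probability spaces $X_0, Y_0$, form $X^\dagger = X \sqcup X_0$ and $Y^\dagger = Y \sqcup Y_0$, and equip them with probability measures $\bar\mu, \bar\nu$ with $\bar\mu|_X = \mu/(2-c)$, $\bar\mu(X_0) = (1-c)/(2-c)$, and analogously for $\bar\nu$. Consider
$$
\bar Z = Z \cup (X_0 \times Y^\dagger) \cup (X^\dagger \times Y_0) \subset X^\dagger \times Y^\dagger,
$$
which is virtually closed in the enlarged space since its complement there is precisely the virtually open set $(X \times Y) \setminus Z$. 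The main step --- and the main obstacle --- is to verify that $\sthi(\bar Z) = 1$ in $X^\dagger \times Y^\dagger$. This should come from a short case analysis on any covering pair $\tilde X^\dagger, \tilde Y^\dagger$: if both complements $X^\dagger\setminus\tilde X^\dagger$ and $Y^\dagger\setminus\tilde Y^\dagger$ are non-empty, then the full row/column pieces $X_0 \times Y^\dagger, X^\dagger \times Y_0 \subset \bar Z$ force $X_0 \subset \tilde X^\dagger$ and $Y_0 \subset \tilde Y^\dagger$, and combining this with the inherited constraint $\mu(\tilde X^\dagger \cap X) + \nu(\tilde Y^\dagger \cap Y) \geq c$ (from $Z \subset \bar Z$) yields $\bar\mu(\tilde X^\dagger) + \bar\nu(\tilde Y^\dagger) \geq 1$ after scaling by $1/(2-c)$.

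Once $\sthi(\bar Z) = 1$ is established, Theorem~\ref{Hall} supplies a bistochastic $\bar\lambda$ on $X^\dagger \times Y^\dagger$ with $\bar\lambda(\bar Z) = 1$; the marginal bounds $\bar\lambda(X_0 \times Y^\dagger), \bar\lambda(X^\dagger \times Y_0) \leq (1-c)/(2-c)$ together with the disjointness of $Z$ from these two pieces give $\bar\lambda(Z) \geq c/(2-c)$. Rescaling, $\lambda^* := (2-c)\,\bar\lambda|_{X \times Y}$ is subbistochastic on $X \times Y$ with $\lambda^*(Z) \geq c$ and marginals dominated by $\mu, \nu$. Completing $\lambda^*$ by adding any non-negative filler $\rho$ on $X \times Y$ whose marginals are $\mu - \pi_X\lambda^*$ and $\nu - \pi_Y\lambda^*$ (such $\rho$ exists since these two measures have matching total mass --- a normalized product construction works) produces a bistochastic $\lambda = \lambda^* + \rho$ with $\lambda(Z) \geq c$, and the easy direction forces the desired equality.
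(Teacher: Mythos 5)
Your proof is correct and follows essentially the same route as the paper: adjoin slack factors of mass $1-\sthi(Z)$, normalize, pass to the augmented set, check that its proper thickness equals $1$, and apply Theorem~\ref{Hall}. You merely spell out details the paper leaves implicit, namely the verification that $\sthi(\bar Z)=1$ and the filler measure needed to upgrade the restriction $(2-c)\,\bar\lambda|_{X\times Y}$ from subbistochastic to genuinely bistochastic.
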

\begin{proof}
We use a typical trick: add to spaces $X$ and $Y$ spaces $X_0$, $Y_0$  respectively of measure $1-\sthi(Z)$,
normalize measures on $X\sqcup X_0$ and $Y\sqcup Y_0$.
Consider the set $Z\sqcup X_0\times Y_0\sqcup X_0\times Y\sqcup X\times Y_0$ in
$(X\sqcup X_0)\times (Y\sqcup Y_0)$. It has proper thickness 1. Apply Lemma \ref{Hall} for this set and
find a bistochastic measure in our extended product $(X\sqcup X_0)\times (Y\sqcup Y_0)$. Its part in
$X\times Y$ is what we are searching for.
\end{proof}

 \section{Applications: optimal transport, embeddings theorem, traces of nuclear operators, restrictions of metrics}\label{tvlo}

In this section we mention some applications of the concept of virtual continuity.

\subsection{Kantorovich density in the optimal transportation problem}

We connect Theorem \ref{dualspace} on the space of quasipolymorphisms $QB^{\infty}$
which is dual to the space $VC^1$ of virtually continuous functions, and the classical theorem
of L.~V.~Kantorovich on duality in continuous linear transportation problem. Recall it.

Consider a metric space $(X, \rho)$ \footnote{
in the classical work \cite{K} this space is compact,
but we further need only that it is complete separable metric (=Polish) space}
and two probabilistic Borel measures $\mu_1,\mu_2$ on it .
The following infimum is to be found:
$$\inf\{\int_X \int_X \rho(x,y)d\Psi(x,y): \Psi \in QBS_{\mu_1.\mu_2}^{\infty}\},$$
where $QBS_{\mu_1.\mu_2}^{\infty}$ is a set of measures on $X\times X$ with projections (marginal)
equal to $\mu_1,\mu_2$
(another name for such measures $\Psi$ --- polymorphism from $(X,\mu_1)$ into
$(X,\mu_2)$, or transportation plan, or coupling, or joining, or Young measure etc.)

Main facts known by general name \textit{duality theorem} claim the following
(we use our terminology):

1) above infimum is attained on some non-negative element $\Psi_0$
of the set $QB_{\mu_1.\mu_2}^{\infty}$ (and is not attained, in general, on the set
of absolutely continuous measures like $d\Psi=p(x,y)d\mu_1(x) d\mu_2(y)$,
where $p$ is a measurable summable function);

2) this infimum may be considered as a norm of a signed measure $\|\mu_1-\mu_2\|$
in a certain space of signed Borel measures on the space $(X,\rho)$
with finite variation. \footnote{this observation made in \cite{KR}
originates a tradition to call this norm the Kantorovich--Rubinstein norm, and metric
the Kantorovich metric.}

3) there is a dual definition of the norm
$$
\|\mu_1-\mu_2\|=\sup\left\{\int_X u(x)d(\mu_1-\mu_2)(x): u\in Lip_1(\rho)\right\},
$$ where $Lip_1(\rho)$ is a unit ball in Lipschitz functions space with usual Lipschitz norm. Supremum
is also realized on some Lipschitz function $u_0$ and we have $u(x)-u(y)=\rho(x,y)$
$\Psi_0$-almost everywhere.

Main sense of above claims is that a norm of an element in banach space may be calculated using functional from
the dual space, and this reduces the problem to finding a dual space.

Above claim is known as duality theorem (or optimality criterion) in the optimal transportation problem and was
formulated in the pioneering paper \cite{K}. At fact what is used is that Lipschitz space is Banach dual to the Kantorovich--Rubinstein space.
Let us outline that it is a duality theorem for functions of ``one variable'', and we ``cover'' it by a duality
for functions of two variables.



Below we show how to apply Theorem \ref{dualspace}, which a claim on dual space for the space
$VC^1$ of virtually continuous functions, to Kantorovich duality.
It is more convenient to tell about transportation between two different spaces (of course,
this is equivalent to above problem on the transportation in the same space).

So we get yet another proof of duality theorem,
and the main feature is that our scheme includes spaces of metrics and plans, unlike
original approach of Kantorovich. Choice of spaces
$VC^1$ and $QB^{\infty}$ is natural in the sense that smaller spaces are not enough
(see remark above) and admissible metrics are virtually continuous functions.

 Two-level duality theorem in our specific situation leads, in turn, to following
general two-level duality. We hope that it has another applications.
This is why we start with our general statement and later explain
how to apply it to optimal transportation.

\begin{theorem}\label{abs}
Let $X$ be a real vector space ordered by a convex cone $K$, let $Y=X^*$
be a dual space. Denote by $W$ and $Z=W^*$ two other linear real spaces. Let $A:W\rightarrow X$
be a linear operator and  $B=A^*:Y\rightarrow Z$ be a conjugate operator:

$$
\begin{array}{ccccccccccc}
W& \stackrel{A}{\longrightarrow} & X& \qquad &X^*& \stackrel{A^*}{\longrightarrow} & W^*\\
\end{array}
$$

Fix a positive element $\rho\in K\subset X$ and define (finite or infinite) quasinorm on $Z$ as follows:
$$
\|z\|_{\rho}=\inf\{(y,\rho):By=z,y\geq 0\}.
$$
Assume the following condition: the space $X$ is the sum of the cone $K$ and the space $A(W)$.
Then
$$
\|z\|_{\rho}=\|z\|_{\rho}':=\sup\{(z,u):Au\leq \rho\}.
$$
Moreover, if $\|z\|_{\rho}<\infty$, then there exist
non-negative continuous functional $y\in X^*$ such that
$(y,\rho)=\|z\|_{\rho}$.
\end{theorem}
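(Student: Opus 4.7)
The plan is to obtain weak duality by a direct computation and then deduce strong duality together with attainment by applying the algebraic Hahn--Banach theorem to a sublinear majorant on $X$ that encodes both the cone constraint and the bilinear coupling.

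First I would prove the easy inequality $\|z\|_\rho'\leq \|z\|_\rho$: if $By=z$ with $y\geq 0$ and $Au\leq\rho$ (so $\rho-Au\in K$), then $(y,\rho-Au)\geq 0$, whence $(z,u)=(By,u)=(y,Au)\leq (y,\rho)$; taking the supremum in $u$ and the infimum in $y$ yields the claim. If $c:=\|z\|_\rho'=\infty$, we are done; otherwise assume $c<\infty$. For the reverse inequality and attainment, introduce
$$
p(x):=\inf\{ct-(z,u):t\geq 0,\ u\in W,\ t\rho-Au-x\in K\}
$$
on $X$. Once $p$ is shown to be a real-valued sublinear functional, the algebraic Hahn--Banach theorem delivers a linear $y\colon X\to\mathbb{R}$ with $y\leq p$ pointwise, and the desired properties fall out from obvious feasible pairs: (i) $y(k)\geq 0$ for $k\in K$, since $(t,u)=(0,0)$ is feasible for $p(-k)$ with value $0$; (ii) $y\circ A=z$ on $W$, since $(0,-u_0)$ and $(0,u_0)$ are feasible for $p(Au_0)$ and $p(-Au_0)$ with values $(z,u_0)$ and $-(z,u_0)$, pinching $y(Au_0)$ from both sides; (iii) $y(\rho)\leq c$ since $(1,0)$ is feasible for $p(\rho)$ with value $c$. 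Applied to this very $y$, weak duality then forces $(y,\rho)\geq c$, so the infimum defining $\|z\|_\rho$ is realized by $y$ at value $c$.

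The main obstacle is to verify that $p$ really is real-valued; positive homogeneity and subadditivity are immediate by concatenating feasible pairs, so the question is finiteness. Finiteness from above is precisely where the hypothesis $X=K+A(W)$ enters: given $x$, write $-x=k+Au_0$, and then $(0,u_0)$ is feasible for $p(x)$, so $p(x)\leq -(z,u_0)<\infty$. The harder bound $p(x)>-\infty$ reduces, via sublinearity and the trivial estimate $p(0)\leq 0$, to showing $p(0)\geq 0$. Here the very definition of $c$ is used: a feasible $(t,u)$ with $t>0$ satisfies $A(u/t)\leq \rho$, hence $(z,u/t)\leq c$, i.e.\ $ct-(z,u)\geq 0$; in the degenerate case $t=0$ one has $Au\leq 0\leq \rho$, and applying the same inequality to $\lambda u$ with $\lambda\to\infty$ forces $(z,u)\leq 0$. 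Once $p$ is known to be finite everywhere, the algebraic Hahn--Banach extension applies and the argument concludes as outlined.
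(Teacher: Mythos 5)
Your argument is correct, and it reaches the conclusion by a genuinely different route than the paper. The paper defines the candidate functional $y$ only on the linear span of $A(W)$ and $\rho$ (setting $y(Au)=(z,u)$ and $y(\rho)=C$), which forces it to check well-definedness separately (the observations that $(z,w)=0$ whenever $Aw=0$ and $(z,w)\geq 0$ whenever $Aw\geq 0$), to treat the case $\rho\in A(W)$ on its own, and to verify nonnegativity on the subspace by an $\varepsilon$-argument with a near-optimal $w$ for elements of the form $Au-\rho\geq 0$; it then extends by the M.~Riesz theorem on positive extensions, whose cofinality hypothesis is exactly $X=K+A(W)$. You instead fold the cone constraint, the equation $By=z$, and the bound $(y,\rho)\leq c$ into a single sublinear value function $p$ and invoke plain algebraic Hahn--Banach. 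The two devices are equivalent in substance (Riesz extension is itself usually proved via such a sublinear majorant), but your packaging buys real simplifications: well-definedness of $y$ on $A(W)$ falls out of the two-sided pinching, the case $\rho\in A(W)$ needs no separate discussion, and all the positivity checks collapse into the single inequality $p(0)\geq 0$, which is precisely where the definition of $c$ enters --- including your correct handling of the degenerate rays with $t=0$, where $Au\leq 0$ forces $(z,u)\leq 0$ by homogeneity and finiteness of $c$. The hypothesis $X=K+A(W)$ appears in your proof as finiteness of $p$ from above, playing the same role as cofinality does for the Riesz extension in the paper. Like the paper, you produce a nonnegative linear functional and leave its automatic boundedness to the Kakutani remark following the theorem, which is consistent with the intended reading of ``continuous'' there.
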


\begin{remark} In the case when $X$ is a Banach space and cone $K$
is closed and generating ($K-K=X$), classical Kakutani theorem says that a non-negative
functional $y$ on $X$ is automatically norm bounded on $X$.
\end{remark}


\begin{proof}
Obviously $\|z\|_{\rho}'\leq \|z\|_{\rho}$. Indeed, for any element $y\in Y$ such that
$y\geq 0$, $By=z$ and any element $u\in W$ such that $Au\leq \rho$ we have
$$
(z,u)=(By,u)=(y,Au)=(y,\rho)-(y,\rho-Au)\leq (y,\rho),
$$
equality holds if $(y,\rho-Au)=0$.

Assume that  $C:=\|z\|_{\rho}'=\sup\{(z,u):Au\leq \rho\}<\|z\|_{\rho}$. Note that finiteness of $C$ implies that
$(z,w)=0$ whenever $Aw=0$ (else consider $u= \lambda w$ for real $\lambda$ of appropriate sign, coupling $(z,u)$
is unbounded.) It means that $z\in B(Y)$, since the image of a dual operator is just annulator of the kernel
of direct operator. Moreover, if $Aw\geq 0$, then $(z,w)\geq 0$, else consider $u=\lambda w$ with negative $\lambda$.
If $\rho=Au_0$ for some $u$, then for any $y$ with $By=z$ we have $(z,u_0)=(By,u_0)=(y,Au_0)=(y,\rho)$, as desired.
Now let $\rho\notin A(W)$.
We have to find a functional $y\in Y=X^*$
such that $(y,Au)=(z,u)$ for all $u\in W$ (it just means that $z=A^*y=By$), $y\geq 0$, $(y,\rho)=C$.
Such $y$ is already defined on a linear hull of the space $A(W)$ and element $\rho$, and it is nonnegative on this
linear hull. Last claim holds for non-negative elements of the form $\rho-Au$
by definition of value $C$, thus we should check it for non-negative elements of the form
$Au-\rho$. Fix $\varepsilon>0$ and find an element $w\in W$ such that $Aw\leq \rho$ and $(y,Aw)=(z,w)\geq C-\varepsilon$.
We have
$$
(y,Au-\rho)=(y,Au)-C\geq (y,Au)-(y,Aw)-\varepsilon=(y,A(u-w))-\varepsilon\geq -\varepsilon,
$$
since $A(u-w)=(Au-\rho)+(\rho-Aw)\geq 0$, and $y$ on $A(W)$ is nonnegative. Since $\varepsilon>0$ was arbitrary we get
$(y,Au-\rho)\geq 0$.

Since $X$ is the sum of the space $A(W)$ and the cone $K$, Riesz theorem on extending of nonnegative functional
allows to extend $y$ to a nonnegative functional on $X$.
\end{proof}

In our situation $X=VC^1(\Omega_1\times \Omega_2)$, $Y\supset QB^{\infty}(\Omega_1\times \Omega_2)$, where $(\Omega_i,\mu_i),i=1,2$ are
standard probabilistic spaces, $W=L^1(\Omega_1)\oplus L^1(\Omega_2)$, $Z=L^{\infty}(\Omega_1)\oplus L^{\infty}(\Omega_2)$,
operator $A$ maps a pair of functions $u=(w_1(t),w_2(t))\in W$ into $w_1(t_1)+w_2(t_2):=(Au)(t_1,t_2)\in X$,
restriction of $B$ on $QB^{\infty}$ maps quasibistochastic sign measure $\eta$ into pair of its projections on $X$, $Y$:

$$
\begin{array}{ccccccc}
L^1(\Omega_1)\times L^1(\Omega_2)& \stackrel{A}{\longrightarrow} & VC^1(\Omega_1,\Omega_2)& \qquad &QB^\infty(\Omega_1,\Omega_2)& \stackrel{A^*}{\longrightarrow} & L^\infty(\Omega_1)\times L^\infty(\Omega_2)\\
(w_1(x),w_2(y))& \stackrel{A}{\longmapsto} & w_1(x)+w_2(y)& \qquad & \mu &\stackrel{A^*}{\longmapsto} &(Pr_{\Omega_1} \mu,Pr_{\Omega_2} \mu)\\
\end{array}
$$

Element $\rho(t_1,t_2)$ is understood as a price of transporting from $t_1\in \Omega_1$ into $t_2\in \Omega_2$,
take element $z\in Z$ equal to a pair of constant functions $(1,1)$
(we do not lose a generality: for other functions just change measures $\mu_1$, $\mu_2$ onto equivalent).
Note that by definition of the space $VC^1$ each function in this space may be represented as a sum
of nonnegative function and a separate function
$w_1(t_1)+w_2(t_2)$. Thus condition of Theorem \ref{abs} holds. Remark to this theorem (cone of nonnegative functions
is clearly closed and generating in $VC^1$) guarantees that this functional is norm bounded, hence it corresponds
to some polymorphism from $QB^{\infty}$. Norm $\|(1,1)\|_{\rho}$ is infimum of plan prices of transportation $\mu_1$ into $\mu_2$
with price function $\rho$. So, Theorem \ref{abs} implies existence of optimal plan.

If we try to replace $VC^1(\Omega_1\times \Omega_2)$ onto space  $L^1(\Omega_1\times \Omega_2)$, then both assumption and
conclusion of Theorem \ref{abs} fail. In this case nonnegative bounded functional corresponds to bounded function
(not just to polymorphism), and optimal plan may easily not exist.

\subsection{Sobolev spaces and trace theorems}

\begin{theorem}\label{sob} Let $\Omega_1,\Omega_2$ be domains of dimensions $d_1,d_2$ respectively, suppose that
$pl>d_2$ or $p=1,l=d_2$. Then functions from the Sobolev space $W_p^l(\Omega_1\times \Omega_2)$
($l$-th generalized derivatives are summable with power $p$) are virtually continuous as functions of two
variables $x\in \Omega_1$, $y\in \Omega_2$. Embedding $W_p^l(\Omega_1\times \Omega_2)$ into $VC^1 (\Omega_1,K)$
is continuous for any compact subset $K$ of the domain $\Omega_2$.
\end{theorem}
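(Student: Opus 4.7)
The plan is to combine Fubini's theorem with the classical Sobolev embedding in the second variable, and then verify virtual continuity using condition (ii) of Theorem \ref{compactness}. The Sobolev norm on $\Omega_1 \times \Omega_2$ controls a mixed norm that puts $W_p^l$ along $\Omega_2$-slices for almost every $x\in\Omega_1$; the embedding hypothesis $pl>d_2$ (or the borderline $p=1, l=d_2$) then turns these slices into continuous functions on any compact $K\subset\Omega_2$, and this is exactly what condition (ii) of Theorem \ref{compactness} asks for.

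More concretely, first I would observe that for any multi-index $\beta$ with $|\beta|\le l$, the pure $y$-derivatives $\partial_y^{\beta} f$ lie in $L^p(\Omega_1\times\Omega_2)$, so by Fubini
$$
\int_{\Omega_1}\sum_{|\beta|\le l}\|\partial_y^{\beta} f(x,\cdot)\|_{L^p(\Omega_2)}^{p}\,dx \;\le\; \|f\|_{W_p^l(\Omega_1\times\Omega_2)}^{p},
$$
so $f(x,\cdot)\in W_p^l(\Omega_2)$ for almost every $x\in\Omega_1$. The classical Sobolev embedding $W_p^l(\Omega_2)\hookrightarrow C(K)$ (available exactly under the hypothesis $pl>d_2$ or $p=1,l=d_2$) then yields, for a.e.\ $x$, a continuous representative $\tilde f(x,\cdot)\in C(K)$ of the slice $f(x,\cdot)$, satisfying
$$
\|\tilde f(x,\cdot)\|_{C(K)} \;\le\; C_K\,\|f(x,\cdot)\|_{W_p^l(\Omega_2)}.
$$
Taking $\rho_Y$ to be the Euclidean metric on $K$ (which is clearly admissible), and $X'=\Omega_1$, $Y'=K$, this directly verifies condition (ii) of Theorem \ref{compactness}, so $f$ is virtually continuous on $\Omega_1\times K$, and by exhausting $\Omega_2$ by such compacts, on $\Omega_1\times\Omega_2$.

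For the continuity of the embedding into $VC^1(\Omega_1,K)$, I would use the defining formula for $\|\cdot\|_{SR^1}$ with the separate regulator $a(x):=\|\tilde f(x,\cdot)\|_{C(K)}$ and $b\equiv 0$. Since $|f(x,y)|\le a(x)$ for a.e.\ $(x,y)\in\Omega_1\times K$, it remains to control $\int_{\Omega_1} a(x)\,dx$. Combining the pointwise Sobolev inequality above with H\"older's inequality (assuming $\Omega_1$ of finite Lebesgue measure, or localizing to a bounded subdomain) gives
$$
\int_{\Omega_1} a(x)\,dx \;\le\; C_K \int_{\Omega_1} \|f(x,\cdot)\|_{W_p^l(\Omega_2)}\,dx \;\le\; C_K |\Omega_1|^{(p-1)/p}\,\|f\|_{W_p^l(\Omega_1\times\Omega_2)},
$$
which is the desired norm estimate $\|f\|_{SR^1}\le C\,\|f\|_{W_p^l}$.

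The main technical nuisance, rather than genuine obstacle, is verifying measurability of the continuous representative $\tilde f(x,\cdot)$ and of the numerical function $x\mapsto\|\tilde f(x,\cdot)\|_{C(K)}$; this follows from the fact that $x\mapsto f(x,\cdot)$ is a Bochner-measurable map into $W_p^l(\Omega_2)$ and the Sobolev embedding $W_p^l(\Omega_2)\to C(K)$ is a continuous linear map of separable Banach spaces, after which composition preserves measurability. The borderline case $p=1$, $l=d_2$ is the only place where one must invoke the slightly finer form of the Sobolev embedding, but the strategy above is unchanged.
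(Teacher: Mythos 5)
Your proof is correct, and its analytic core --- Fubini to put almost every slice $f(x,\cdot)$ into $W_p^l(\Omega_2)$, the classical embedding $W_p^l(\Omega_2)\hookrightarrow C(K)$, and the separate regulator $a(x)\leq C_K\|f(x,\cdot)\|_{W_p^l(\Omega_2)}$, $b\equiv 0$, followed by H\"older in $x$ --- is exactly the paper's. The one genuine divergence is how virtual continuity itself is obtained: the paper first proves the norm estimate \eqref{vlozhenie} for \emph{smooth} $f$ (which are trivially virtually continuous), and then gets the general case by density of smooth functions in $W_p^l$ together with the fact that $\rm{SR}^1$-limits of virtually continuous functions are virtually continuous (Corollary \ref{normtau} plus Theorem \ref{tau-virtual}); you instead verify criterion (ii) of Theorem \ref{compactness} directly for an arbitrary Sobolev function, taking $Y'=K$ with the Euclidean metric and using that a.e.\ slice has a continuous representative on $K$. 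Your route buys a proof that does not pass through the approximation machinery for the qualitative statement, at the price of having to address measurability of $x\mapsto\tilde f(x,\cdot)$ and of $x\mapsto\|\tilde f(x,\cdot)\|_{C(K)}$ by hand (which you do, via Bochner measurability of $x\mapsto f(x,\cdot)$ composed with the continuous embedding); the paper's route sidesteps all slice-measurability issues but leans on its $\rm{SR}^1$/$\tau$-closure theorems. Both arguments share the same implicit hypotheses (bounded or finite-measure domains with enough boundary regularity for the embedding constant $c(\Omega_2,K)$, and normalization of the measures to make the $1-\eps$ conditions meaningful), so I regard your write-up as a complete and faithful alternative.
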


\begin{proof}
Using the theorem of embedding of Sobolev space into continuous functions (see, for instance, \cite{Ma,AF}),
we have the following estimate for functions $h(y)\in W_p^l(\Omega_2)$:
$$
\|h\|_{C(K)}\leq c(\Omega_2,K) \|h\|_{W_p^l(\Omega_2)}.
$$
Let $f(x,y)\in W_p^l(\Omega_1\times \Omega_2)$ be a smooth function.
Set $$a(x):=\|f(x,\cdot)\|_{W_p^l(\Omega_2)}.$$
Then by Fubini's theorem $a\in L^1(\Omega_1)$ and
$$
\int |a|\leq c(\Omega_1,\Omega_2) \|f(x,y)\|_{W_p^l(\Omega_1\times \Omega_2)}.
$$
The following estimate holds on $\Omega_1\times K$:
$$
|f(x,y)|\leq \|f(x,\cdot)\|_{C(K)}\leq c(\Omega_2,K) a(x).
$$
Summarizing this we have
\begin{equation}\label{vlozhenie}
\|f\|_{VC^1(\Omega_1,K)}\leq c(\Omega_1,\Omega_2,K) \|f\|_{W_p^l(\Omega_1\times \Omega_2)}.
\end{equation}
Each function in the class
 $W_p^l(\Omega_1\times \Omega_2)$ is a limit of a sequence of smooth functions,
by (\ref{vlozhenie}) it is a limit in $VC^1$ as well.
\end{proof}

So, under conditions of this theorem we may integrate functions over
quasibistochastic measures. It generalizes usual theorems about traces on submanifolds.

\subsection{Nuclear operators in Hilbert space}

It is well known that the space of nuclear operators in the Hilbert space
$L^2$ is a projective tensor product of Hilbert spaces.
Their kernels are measurable functions of two variables, which can hardly be described directly.
the following theorem claims that kernels of nuclear operators are virtually
continuous as functions of two variables.
Note that kernels of Hilbert--Schmidt operators are not in general
virtually continuous.

\begin{theorem}
Let $(X,\mu)$, $(Y,\nu)$ be standard spaces. the space of kernels of nuclear operators from $L^2(X)$ to $L^2(Y)$
(with Schatten--von Neumann norm) embeds continuously into $VC^1$.
\end{theorem}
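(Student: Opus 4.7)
The plan is to pass through the Schmidt decomposition of a nuclear operator and reduce everything to a single rank-one estimate for the $\rm{SR}^1$-norm. Recall that a nuclear operator $T\colon L^2(X)\to L^2(Y)$ admits a decomposition $T=\sum_{n\geq 1} s_n \langle\cdot,\phi_n\rangle\psi_n$ with orthonormal systems $\{\phi_n\}\subset L^2(X)$, $\{\psi_n\}\subset L^2(Y)$ and singular values satisfying $\sum_n s_n=\|T\|_{S^1}<\infty$. The kernel of $T$ is then $K(x,y)=\sum_n s_n \phi_n(x)\psi_n(y)$, where the series converges in $L^2(X\times Y)$, and each partial sum $K_N=\sum_{n\leq N}s_n\phi_n(x)\psi_n(y)$ is a finite rank (degenerate) function, hence virtually continuous by the discussion following the definition of virtual continuity.

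The key lemma to be established is the rank-one inequality
$$\thn{\phi\otimes\psi}\leq \|\phi\|_{L^2(X)}\,\|\psi\|_{L^2(Y)}\qquad (\phi\in L^2(X),\ \psi\in L^2(Y)).$$
This is immediate from AM-GM: with the scale $t=\|\psi\|_2/\|\phi\|_2$ one has the pointwise bound $|\phi(x)\psi(y)|\leq \tfrac12(t|\phi(x)|^2+t^{-1}|\psi(y)|^2)=:a(x)+b(y)$ and $\int_X a\,d\mu+\int_Y b\,d\nu=\|\phi\|_2\|\psi\|_2$. Applying this to each term $s_n\phi_n\otimes\psi_n$ together with the triangle inequality for $\thn{\cdot}$ yields
$$\thn{K_M-K_N}\leq \sum_{n=N+1}^{M} s_n\to 0,$$
so the partial sums form a Cauchy sequence in $\rm{SR}^1$-norm.

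To finish, I would invoke the theorem identifying $VC^1$ with the $\rm{SR}^1$-closure of the set of step (equivalently, finite rank) functions, which provides completeness of $VC^1$: the Cauchy sequence $(K_N)$ has an $\rm{SR}^1$-limit $\tilde K\in VC^1$. By Corollary \ref{normtau} the sequence also converges to $\tilde K$ in the $\tau$-metric, hence in measure; on the other hand $K_N\to K$ in $L^2(X\times Y)$ and therefore also in measure, so $\tilde K=K$ almost everywhere. Passing to the limit in the estimates above gives
$$\thn{K}\leq \sum_n s_n=\|T\|_{S^1},$$
which is exactly the continuous embedding sought. The only subtle point is the identification of the $L^2$-kernel of $T$ with the $\rm{SR}^1$-limit of the Schmidt partial sums; the rank-one AM-GM estimate and the completeness of $VC^1$ are essentially bookkeeping once the right pieces are assembled, so the main technical ingredient is the previously established characterization of $VC^1$ as the $\rm{SR}^1$-closure of finite rank functions.
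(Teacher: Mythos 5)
Your proposal is correct and follows essentially the same route as the paper: the AM--GM bound $|s_n\phi_n(x)\psi_n(y)|\le\frac12 s_n|\phi_n(x)|^2+\frac12 s_n|\psi_n(y)|^2$ applied across the Schmidt decomposition (which, for orthonormal systems, is exactly your rank-one lemma with $t=1$), followed by approximation of a general nuclear operator by its finite-rank partial sums. Your explicit identification of the $\rm{SR}^1$-limit with the $L^2$-kernel via convergence in measure is a welcome extra detail that the paper's proof leaves implicit.
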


\begin{proof}
Let $K(x,y)$ be a kernel of finite rank integral operator from $L^2(X)$ to $L^2(Y)$
with nuclear norm $1$. Then there is a finite sum representation
$K(x,y)=\sum s_k a_k(x) b_k(y)$, where $s_k$ are singular values of the operator,
$(a_k)$, $(b_k)$ are orthonormal systems, $\sum |s_k|\leq 1$.
We have almost everywhere
$$
|K(x,y)|\leq \frac12 \sum |s_k|\cdot |a_k^2(x)|+\frac12 \sum |s_k|\cdot |b_k^2(y)|.
$$
RHS has a form $A(x)+B(y)$, and $\int |A(x)|dx+\int |B(y)|dy\leq 1$.
Thus norm of $K(x,y)$ in the space $VC^1$ does not exceed  1.
It remains to note that any nuclear operator may be approximated in nuclear norm
by operators of finite rank, and by above estimate this is approximation in $VC^1$ as well.
\end{proof}

It implies that such kernels may be integrated not only over diagonal when $X=Y$, which is well known,
but by bistochastic measures. But the space $VC^1$ is wider than kernels of nuclear
operators. If we look at  $VC^1$ as to the space of kernels of integral operators,
it is not unitary invariant, on the contrast to Schatten--von Neumann spaces.
indeed, the definition of $VC^1$ essentially uses known
sigma-subalgebras, which do not have necessary invariance. Close question is
considered in \cite{GD}. See more on traces of nuclear operators and virtual continuity
in \cite{VPZAA}.

\subsection{Restrictions of metrics}

The following problem was one of origins of this paper.
Let $(X,\mu)$ be a standard space with continuous measure.
Assume that $\rho$ is an admissible metric and $\xi$ is a measurable partition of $(X,\mu)$ with parts of null measure
(say, $\xi$ is a partition onto level sets of function which is not constant on sets of positive measure).
May we correctly restrict our metric (a s a function of two variables) onto elements of this partition?

It is not immediately clear, since the metric is a priori just a measurable function.
But admissible metric is virtually continuous, and so for our goal it suffices to
define a bistochastic measure, onto which we have to restrict it. Suppose for simplicity that
$X=[0,1]^2$, $\mu$ is a Lebesgue measure, $\xi$ is a partition onto vertical lines.
Then we say about restriction of virtually continuous function defined on $X^2=[0,1]^4$ onto
three-dimensional submanifold
 $\{(x_1,x_2,x_3,x_4):x_1=x_3\}$.
It is easy to see that such a submanifold equipped by a three-dimensional Lebesgue  measure
defines a bistochastic measure on $X\times X$.

\section{Acknowledgements}

We are grateful to L.~Lovasz, who  sent us his recent monograph \cite{L}, in which close questions
are discussed, and to A.~Logunov for paying our attention to the
possibility of dual definition of the thickness.

Translation by F.~Petrov
\newpage

\end{document}